\numberwithin{equation}{section}
\newtheorem{theorem}{Theorem}[section]
\newtheorem{lemma}[theorem]{Lemma}
\newtheorem{corollary}[theorem]{Corollary}
\newtheorem{cor}[theorem]{Corollary}
\theoremstyle{definition}\newtheorem{definition}[theorem]{Definition}
\newtheorem{proposition}[theorem]{Proposition}
\theoremstyle{definition}
\theoremstyle{definition}
\theoremstyle{definition}\newtheorem{remark}[theorem]{Remark}
\theoremstyle{definition}
\newcommand{\dd}{\mathbf{d}}
\newcommand{\al}{\alpha}
\newcommand{\ga}{\gamma}
\newcommand{\Ga}{\Gamma}
\newcommand{\del}{\delta}
\newcommand{\Del}{\Delta}
\newcommand{\Lam}{\Lambda}
\newcommand{\Om}{\Omega}
\newcommand{\vphi}{\varphi}
\newcommand{\vre}{\varepsilon}
    \newcommand{\Xn}{{\mathcal{L}_n}}
\newcommand\crly[1]{\mathscr{#1}}
\newcommand{\sm}{\smallsetminus}
\newcommand{\df}{{\, \stackrel{\mathrm{def}}{=}\, }}
\newcommand\Name[1]{\label{#1}{\ifdraft{\sn
      [#1]}\else\ignorespaces\fi}}
\newcommand\eq[2]{{\ifdraft{\ \tt
      [#1]}\else\ignorespaces\fi}\begin{equation}\label{#1}{#2}\end{equation}} 
\newcommand {\equ}[1]{\eqref{#1}}
\newcommand\Vol{\mathrm{Vol}}
\newcommand\covrad{\mathrm{covrad}}
\newcommand\conv{\mathrm{conv}}
\newcommand\supp{\mathrm{supp}}
\newcommand{\cA}{\mathcal{A}}
\newcommand{\cB}{\mathcal{B}}
\newcommand{\cD}{\mathcal{D}}
\newcommand{\cE}{\mathcal{E}}
\newcommand{\cI}{\mathcal{I}}
\newcommand{\cL}{\mathcal{L}}
\newcommand{\cM}{\mathcal{M}}
\newcommand{\cS}{\mathcal{S}}
\newcommand{\cU}{\mathcal{U}}
\newcommand{\cV}{\mathcal{V}}
\newcommand{\cW}{\mathcal{W}}
\newcommand{\cY}{\mathcal{Y}}
\newcommand{\spa}{{\rm span}}
\newcommand{\bR}{\mathbb{R}}
\newcommand{\bZ}{\mathbb{Z}}
\newcommand{\R}{{\mathbb{R}}}
\newcommand{\TT}{{\mathbb{T}}}
\newcommand{\Z}{{\mathbb{Z}}}
\newcommand {\ignore}[1]  {}
\newcommand{\SL}{\operatorname{SL}}
\newcommand{\defi}{\overset{\on{def}}{=}}
\newcommand\norm[1]{\left\|#1\right\|}
\newcommand\set[1]{\left\{#1\right\}}
\newcommand\pa[1]{\left(#1\right)}
\newcommand{\E}{\mathbf{e}}
\newcommand\av[1]{\left|#1\right|}
\newcommand\on[1]{\operatorname{#1}}
\newcommand\tb[1]{\textbf{#1}}
\newcommand\mat[1]{\pa{\begin{matrix}#1\end{matrix}}}
\newcommand\br[1]{\left[#1\right]}
\newcommand\smallmat[1]{\pa{\begin{smallmatrix}#1\end{smallmatrix}}}
\newcommand{\lra}{\longrightarrow}
\newcommand{\onto}{\xymatrix{\ar@{>>}[r]&}}
\newcommand{\da}[4]{\xymatrix{#1 \ar@<.5ex>[r]^{#2} \ar@<-.5ex>[r]_{#3} & #4}}
\newif\ifdraft\drafttrue
\font\sn = cmssi8 scaled \magstep0
\newcommand{\nerve}{{\rm Nerve}}
\newcommand{\order }{{\mathrm {ord}}}
\newcommand{\Lb}{{\mathrm {Leb}}}
\newcommand{\mesh}{{\mathrm {mesh}}}
\newcommand{\asdim}{{\rm asdim}}
\begin{document}
\title[Closed orbits and well-rounded lattices]{Closed orbits for the diagonal group and well-rounded lattices}
\author{Michael Levin}
\address{Dept. of Mathematics, Ben Gurion University, Be'er Sheva,
  Israel
{\tt mlevine@math.bgu.ac.il}}
\author{Uri Shapira}
\address{Dept. of Mathematics, Technion, Haifa, Israel
{\tt ushapira@tx.technion.ac.il}
}
\author{Barak Weiss}
\address{Dept. of Mathematics, Tel Aviv University, Tel Aviv, Israel
{\tt barakw@post.tau.ac.il}}

\maketitle
\begin{abstract}
Curt McMullen showed that every compact orbit for the action of the
diagonal group on the space of lattices 
contains a well-rounded lattice. We extend this to all closed orbits. 
\end{abstract}
\section{Introduction}
Let $n \geq 2$ be an integer, let $G \df \SL_n(\R)%, \, \Gamma \df
%\SL_n(\Z)
$, let $A \subset G$ be the subgroup of diagonal
matrices with positive entries and let  $\Xn \df G/\SL_n(\Z)$ be the space of unimodular 
lattices in $\R^n$. The dynamics of the $A$-action on $\Xn$ is a
well-studied topic in view of applications to number theory. For
instance, McMullen \cite{McMullenMinkowski} studied this action in connection with his
fundamental work on Minkowski's conjecture.
A lattice $x \in \Xn$ is called {\em well-rounded} if the nonzero
vectors of shortest length in $x$ span $\R^n$, and McMullen proved that any compact $A$-orbit
contains a well-rounded lattice. We show: 

\begin{theorem}\Name{thm: main} 
Suppose $x \in \Xn$ and $Ax$ is closed. Then $Ax$ contains a
well-rounded lattice. 
\end{theorem}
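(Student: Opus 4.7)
My plan is to find a well-rounded lattice in $Ax$ as a maximizer of the first-minimum function $\lambda_1$ (length of a shortest nonzero vector) restricted to the orbit, extending McMullen's strategy from the compact case.

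First I would show that $L \df \sup_{y \in Ax} \lambda_1(y)$ is attained. The supremum $L$ is finite because $\lambda_1$ is bounded on $\Xn$ by Hermite's constant. Take a maximizing sequence $y_m = a_m x$ with $\lambda_1(y_m) \to L$; for all large $m$ we have $\lambda_1(y_m) \ge L/2 > 0$, so by Mahler's compactness criterion $\{y_m\}$ is relatively compact in $\Xn$. Passing to a subsequence $y_m \to y \in \Xn$; since $Ax$ is closed, $y \in Ax$, and continuity of $\lambda_1$ gives $\lambda_1(y) = L$. This is where the closedness hypothesis enters crucially, replacing compactness in McMullen's setup.

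Next I would verify that the maximizer $y$ is well-rounded via a local convex-analytic criterion. Let $v^{(1)}, \ldots, v^{(k)}$ be the shortest nonzero vectors of $y$ and set $p^{(j)} \in \R^n$ by $p^{(j)}_i = (v^{(j)}_i)^2/\|v^{(j)}\|^2$. For $H = \diag{h_1,\ldots,h_n}$ with $\sum h_i = 0$, the derivative of $\|\exp(tH)v^{(j)}\|^2$ at $t=0$ equals $2\sum_i h_i (v^{(j)}_i)^2$, so $\lambda_1(\exp(tH)y)$ strictly increases for small $t>0$ provided $\sum_i h_i p^{(j)}_i > 0$ for every $j$. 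A Farkas-type separation argument shows that no such $H$ exists if and only if the uniform vector $(1/n,\ldots,1/n)$ lies in $\conv\{p^{(j)}\}$. In the easy subcase where $\spa\{v^{(j)}\}$ is contained in a coordinate hyperplane $\{x_{i_0}=0\}$, all $p^{(j)}_{i_0}=0$, so the convex hull cannot contain the uniform vector; this yields a direction of strict increase, contradicting maximality. Hence the shortest vectors cannot be confined to a proper coordinate subspace.

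The main obstacle is ruling out the remaining case, in which the shortest vectors span a proper but non-coordinate subspace $V \subsetneq \R^n$ while still satisfying the convex-hull critical condition. Such configurations genuinely occur for abstract unimodular lattices (e.g.\ sufficiently scaled hexagonal $A_2$-type configurations inside the plane $\{x_1+x_2+x_3=0\}\subset\R^3$ give a lattice that is a local maximum of $\lambda_1$ along its $A$-orbit yet is not well-rounded), so closedness of $Ax$ must be invoked in an essential way here. The intended mechanism is the arithmetic rigidity of closed diagonal orbits: each such orbit comes from a lattice carrying the structure of a module over an order in an \'etale $\mathbb{Q}$-algebra of dimension $n$, with decomposition aligned to the coordinate axes of $\R^n$, which should force any critical configuration of shortest vectors to respect the coordinate decomposition. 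Equivalently, one aims to show that any non-coordinate critical configuration has a non-closed $A$-orbit. Formalizing this rigidity --- either via the classification of closed $A$-orbits or via a direct perturbation argument in the orbit --- is the core difficulty of the proof.
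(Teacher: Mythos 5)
Your proposal opens correctly: the existence of a maximizer $y$ of $\lambda_1$ on the closed orbit $Ax$ does follow from Mahler's criterion and the closedness hypothesis, exactly as you argue. But the remainder is a genuinely different strategy from the paper's, and as you yourself acknowledge, it has an unresolved hole at its center.

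The gap is precisely your last paragraph. It is simply not true that a critical (or even globally maximal) point of $\lambda_1$ along an $A$-orbit must be well-rounded, and your own $A_2$-in-$\{x_1+x_2+x_3=0\}$ example illustrates why: the Voronoi-type condition ``the uniform vector lies in $\conv\{p^{(j)}\}$'' is compatible with the shortest vectors spanning a proper subspace $V$, provided $V$ is in general position with respect to the coordinate axes. Ruling this out is not a detail --- it is the whole content of the theorem. Your suggested mechanism (``arithmetic rigidity of closed orbits forces critical configurations to respect the coordinate decomposition'') is plausible as a slogan, but it is not obvious, and you give no argument for it. Note in particular that for a \emph{compact} orbit, the decomposition of Proposition \ref{prop: TW, SW} degenerates to the trivial one ($d=1$, $V_1=\R^n$, $T_1$ trivial), so the proposed rigidity statement would have to be proved for all lattices arising from orders in totally real fields, with no help from the block structure. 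McMullen himself did not take the maximization route even in the compact case, and the paper does not either: the well-rounded lattice produced here is obtained as a limit of $a_jx$ with $a_j \in U_n^{(\vre_j)}$, and there is no claim that it maximizes $\lambda_1$.

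What the paper actually does is quite different: it builds the cover $\cU^{(\vre)} = \{U_j^{(\vre)}\}$ of $A$ indexed by $\dim_\delta$, restricts it to a product $\Delta_\rho \times T_2$ coming from the closed-orbit decomposition of Proposition \ref{prop: TW, SW}, and invokes the covering Theorem \ref{thm: covering} (a hybrid of McMullen's Theorem 5.1 and the KKM theorem) to force $U_n^{(\vre)} \neq \varnothing$. The ``hard'' hypotheses there are verified by Lemma \ref{bdd dist from stab} (bounded distance from the stabilizer of a flag) and Lemma \ref{lem: flag} (codimension of flag stabilizers), not by a variational criterion. The boundary hypothesis (i) of Theorem \ref{thm: covering} is where the divergent factor $T_1$ and the choice of $\rho$ in \equ{eq: choice of R} come in --- this is the place where the decomposition of $\R^n$ into the $V_i$ is genuinely used, and it plays a role nothing like the ``rigidity'' you would need. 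So this is not merely a gap you could fill with more work along your lines; to complete your proof you would need a new result (``every $\lambda_1$-critical point on a closed $A$-orbit is well-rounded'') that the paper never proves and that may well be false. You should either prove that statement or switch to the covering-theorem approach.
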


The proof of Theorem \ref{thm:
  main} closely follows McMullen's strategy; namely McMullen deduced
theorem \ref{thm: main} from a covering result regarding covers of the
torus $\TT^n$, while we deduce it from a different covering
result. 

McMullen also showed that any {\em bounded} $A$-orbit contains a
well-rounded lattice {\em in its closure. } It is natural to inquire
whether this result could be strengthened, by removing either of the
italicized phrases. As we show in Proposition \ref{cor: almost every},
this strengthening holds for almost every $A$-orbit, but 
whether or not it holds for {\em every} $A$-orbit is an open
question. Our result could be seen as a partial step in this 
direction. Our proofs rely on results
of Tomanov and the authors \cite{TW, gruber} classifying closed
orbits for the $A$-action, as well as a
covering result which is of independent interest. For
another perspective on this and related questions, see \cite{PS}. 

\subsection{Acknowledgements} A preliminary version
of this paper appeared on arXiv as a chapter of the manuscript
\cite{gruber_arxiv}. We thank Roman
Karasev for enlightening remarks. The authors' work was
supported by ERC starter grant DLGAPS 279893 and ISF grants 190/08, 357/13, and 
the Chaya Fellowship.

\ignore{

\section{Orbit closures and stable lattices}
Given a lattice $x\in \Xn$ and a subgroup $\Lam \subset x$, we denote by
$r(\Lam)$ the rank of $\Lam$ and by $\av{\Lam}$ the covolume of $\Lam$
in the linear subspace $\spa \, (\Lambda)$. 
Let 
\begin{align}\label{alpha}
\nonumber \cV(x)&\defi\set{\av{\Lam}^{\frac{1}{r(\Lam)}}: \Lam \subset
  x %\textrm{ a subgroup}
}, \\
\al(x)&\defi\min\cV(x).
\end{align}
Since we may take $\Lam = x$ we have 
$\alpha(x) \leq 1$ for all $x \in \Xn$, and $x$ is stable precisely if $\alpha(x)=1$. 
%We
%denote the set of stable lattices by $\cS$. 
Observe that $\cV(x)$ 
is a countable discrete subset of the positive reals, and hence the
minimum in 
\eqref{alpha} is attained. 
Also note that the function $\al$ is a variant of the `length of the shortest
  vector'; it is continuous and the sets $\{x: \alpha(x) \geq \vre\}$
  are an exhaustion of $\Xn$ by compact sets. 

We begin by explaining the strategy for proving Theorem \ref{thm:
  main}, which is identical to the one used by McMullen. 
For a lattice $x\in X$ and $\vre>0$  we define an open cover  
$\cU^{x,\vre}=\set{U^{x,\vre}_k}_{k=1}^n$ 
of the diagonal group $A$, where if $a\in U^{x,\vre}_k$ then $\al(ax)$
is `almost attained' by a subgroup of rank $k$. In particular,  
if $a\in U^{x,\vre}_n$ then $ax$ is `almost stable'. 
The main point is to show that for any $\vre>0$, $U^{x,\vre}_n \neq
\varnothing$; for then, taking $\vre_j \to 0$ and $a_j \in A$ such
that $a_j\in U_n^{x,\vre_j}$, we find (passing to a subsequence) that
$a_jx$ converges to a stable lattice. 

In order to establish that
$U_n^{x,\vre}\ne\varnothing$, we apply a topological result of McMullen
(Theorem~\ref{topological input}) regarding open covers  
which is reminiscent of the classical result of Lebesgue
that asserts that in an open cover of Euclidean $n$-space by bounded balls
there must be a point which is covered $n+1$ times. We will work to
show that the 
cover $\cU^{x,\vre}$ satisfies the assumptions of
Theorem~\ref{topological input}. We will be able to verify these assumptions
when the orbit $Ax$ is bounded. In~\S\ref{sec: reduction to compact orbits} we reduce the proof of
Theorem~\ref{thm: main} to this case.

\subsection{Reduction to bounded orbits}\Name{sec: reduction to compact orbits}
Using a result of Birch and Swinnerton-Dyer, we
will now show that it suffices to prove  
Theorem~\ref{thm: main}
under the assumption that the orbit $Ax\subset \Xn$ is bounded; that is,
that $\overline{Ax}$ is compact. 
In this subsection we will denote $A,G$ by $A_n, G_n$ as various dimensions will appear. 

For a matrix $g\in G_n$ we denote by $\br{g}\in \Xn$ the corresponding lattice. If 
\eq{block form}{
g=\mat{g_1&*&\dots&*\\ 0& g_2&\dots&\vdots \\ \vdots& &\ddots&* \\ 0&\dots&0&g_k}
}
where $g_i\in G_{n_i}$ for each $i$, then we say that $g$ is in
\textit{upper triangular block form} 
and refer to the $g_i$'s as the \textit{diagonal blocks}. Note 
that in this definition, we insist that
each $g_i$ is of determinant one. 

\begin{lemma}\Name{lem: block stable is stable}
Let $x=\br{g}\in \Xn$  where $g$ is in upper triangular block form as
in~\eqref{block form} and for each $1\le i\le k$, $\br{g_i}$ is  
a stable lattice in $\mathcal{L}_{n_i}$. Then $x$ is stable.
\end{lemma}
\begin{proof}
By induction, in proving the Lemma we may assume that $k=2$. Let us
denote the standard basis of $\R^n$ by $\E_1, \ldots, \E_n$, let
us write $n=n_1+n_2$, 
$V_1 \df\on{span}\set{\E_1, \ldots, \E_{n_1}}$,
$V_2 \df \on{span}\set{\E_{n_1+1} \ldots, \E_n}$, and let $\pi: \R^n
\to V_2$ be the natural projection. By construction we have $x \cap
V_1 = [g_1], \pi(x) = [g_2]$. 
 
Let $\Lam \subset x$ be a subgroup, write 
$\Lam_1 \df \Lam\cap V_1$ and choose a direct complement 
$\Lam_2 \subset \Lam$, that is 
$$\Lam=\Lam_1+\Lam_2, \ \ \Lam_1 \cap \Lam_2 = \{0\}.$$ 
We claim that 
\eq{eq: claim 1}{\av{\Lam}=\av{\Lam_1}\cdot\av{\pi(\Lam_2)}.}
To see this we recall that one may compute
$|\Lam|$ via the Gram-Schmidt process. Namely, one begins with a set of generators $v_j$
of $\Lambda$ and successively defines $u_1=v_1$ and $u_j$ is the
orthogonal projection of $v_j$ on $\spa \, (v_1, \ldots,
v_{j-1})^\perp$. In these terms, $|\Lam| = \prod_j \|u_j\|$. Since $\pi$ is an orthogonal
projection and $\Lam \cap V_1$ is in $\ker \pi$, \equ{eq: claim 1} is clear from the
above description. 

The discrete subgroup
$\Lam_1$, when viewed as a subgroup of $\br{g_1}\in \mathcal{L}_{n_1}$ satisfies
$\av{\Lam_1}\ge 1$ because $\br{g_1}$ is assumed to be
stable. Similarly $\pi(\Lam_2) \subset [g_2] \in \mathcal{L}_{n_2}$
satisfies $\av{\pi(\Lam_2)}\ge 1$, hence 
$\av{\Lam}\ge 1$. 
\end{proof}
\begin{lemma}\Name{lem: compt red}
Let $x\in \Xn$ and assume that $\overline{Ax}$ contains a lattice
$\br{g}$ with $g$ of upper triangular block form 
as in~\eqref{block form}. For each $1\le i\le k$,  
suppose $\br{h_i}\in\overline{A_{n_i}\br{g_i}}\subset \mathcal{L}_{n_i}$. Then there
exists a lattice $\br{h}\in\overline{Ax}$ such that $h$ has the
form~\eqref{block form} with $h_i$ as its diagonal blocks. 
\end{lemma}
\begin{proof}
Let $\Omega$ be the set of all lattices $[g]$ of a fixed triangular
form as in ~\eqref{block form}. Then $\Omega$ is a closed subset of
$\Xn$ and there is a projection 
% set $\Omega$ of all
%lattices $[g]$ for which $g$ is in this form is a closed subset of
%$\Xn$.  Consider the map 
$$\tau: \Omega \to \mathcal{L}_{n_1} \times
\cdots \times \mathcal{L}_{n_k}, \ \ \tau(\br{g}) =
(\br{g_1},\dots,\br{g_k}).$$ 
The map $\tau$ has a compact fiber and is equivariant with respect to the action
of $\widetilde{ A} \df A_{n_1} \times \cdots \times A_{n_k}$. 
By assumption, there is a sequence  $\tilde{a}_j = \left(a^{(j)}_1,
\ldots, a^{(j)}_k\right), \ a^{(j)}_i \in A_{n_i}$ in
$\widetilde{ A}$ such that $a^{(j)}_i [g_i] \to [h_i]$, then after passing to
a subsequence, $\tilde{a}_j [g] \to [h]$ where $h$ has the required
properties. Since $\overline{Ax} \supset \overline {A[g]}$, the claim
follows. 
\end{proof}
\begin{lemma}\label{BSD}
Let $x\in \Xn$. Then there is $[g] \in \overline{Ax}$ such that, up to
a possible permutation of the coordinates,
$g$ is of upper triangular block form as in~\eqref{block form} and 
each $A_{n_i}\br{g_i}\subset \mathcal{L}_{n_i}$ is bounded.
\end{lemma}
\begin{proof}
If the orbit $Ax$ is bounded there is nothing to prove. According to
Birch and Swinnerton-Dyer \cite{BirchSD}, if $Ax$ is
unbounded 
then $\overline{Ax}$ contains a lattice with a
representative as in~\eqref{block form} (up to a possible permutation
of the coordinates) with $k=2$. Now the claim follows using 
induction and appealing to
Lemma~\ref{lem: compt red}. 
\end{proof}
\begin{proposition}\label{copt red prop}
It is enough to establish Theorem~\ref{thm: main} for
lattices having a bounded $A$-orbit.
\end{proposition}
\begin{proof}
Let $x\in \Xn$ be arbitrary. By Lemma~\ref{BSD}, $\overline{A x}$
contains a lattice $\br{g}$ with $g$ of upper triangular block form
(up to a possible permutation of the coordinates)
with diagonal blocks representing lattices with bounded orbits under
the corresponding diagonal groups. Assuming Theorem~\ref{thm: main}
for lattices having bounded orbits, and applying Lemma~\ref{lem: compt
  red} we may take $g$ whose diagonal blocks represent  
stable lattices. By Lemma~\ref{lem: block stable is stable}, $\br{g}$ is
stable as well.
\end{proof}

\subsection{Some technical preparations}
We now discuss the  subgroups of a lattice $x\in \Xn$ which
almost attain the minimum $\al(x)$ in~\eqref{alpha}.  
 \begin{definition}\label{bn}
Given a lattice $x\in \Xn$ and $\del>0$, let 
\begin{align*}
\on{Min}_{\del}(x)&\defi\set{\Lam \subset x:\av{\Lam}^{\frac{1}{r(\Lam)}}<(1+\del)\al(x)},\\
\tb{V}_{\del}(x)&\defi\on{span}\on{Min}_{\del}(x),\\
\dim_\del(x)&\defi\dim\tb{V}_{\del}(x).
\end{align*}
\end{definition}
%In this subsection we develop some understanding of the notions
%appearing in Definition~\ref{bn}. The discussion in this  
%subsection is rather technical and on first reading the reader might
%skip the details taking with him 
We will need the following technical statement. 
\begin{lemma}\label{for the inradius}
For any $\rho>0$ %and any $C>0$ 
there exists a
neighborhood of the identity $W\subset 
G$ with the  
following property. Suppose  $ 2\rho \leq \delta_0 \leq
d+1$ and suppose  $x\in \Xn$ %and $1\le k\le n$ 
such that 
$\dim_{\delta_0-\rho}(x)=\dim_{\delta_0+\rho}(x)$. 
%for any $\del\in [\del_0-\rho,\del_0+\rho],$ 
%$\dim_\del(x)=k$. 
Then for any $g\in W$ and any 
$\del\in \left(\del_0-\frac{\rho}{2},\del_0+\frac{\rho}{2} \right)$ we have 
\begin{equation}\label{eq 1806}
\tb{V}_{\del}(gx)=g\tb{V}_{\del_0}(x).
\end{equation}
In particular, there is $1 \leq k \leq n$ such that  
for any $g\in W$ and any $\del\in
\left(\del_0-\frac{\rho}{2},\del_0+\frac{\rho}{2} \right)$, 
$\dim_\del(gx)=k$. 
\end{lemma}
\begin{proof}
Let $c>1$ be chosen close enough to 1 so that for $2\rho \leq \delta_0
\leq d+1$ we have
\eq{eq: defn c}{c^2\left(1+\del_0+\frac{\rho}{2} \right) < 1+\del_0 +\rho \ \ \text{and
} \ \frac{1+\del_0-\frac{\rho}{2}}{c^2} > 
1+\del_0-\rho.}
Let $W$ be a small enough neighborhood of the identity
in $G$, so that for any discrete subgroup $\Lam \subset \bR^n$ we have 
\begin{equation}\label{22.2.2}
g\in W \ \ \implies \ \ c^{-1}\av{\Lam}^\frac{1}{r(\Lam)}\le
\av{g\Lam}^\frac{1}{r(g\Lam)}\le c\av{\Lam}^\frac{1}{r(\Lam)}. 
\end{equation}
Such a neighborhood exists since  the linear action of $G$
on $\bigoplus_{k=1}^n\bigwedge^k \R^n$ is continuous, and since 
we can write $|\Lam| = \|v_1 \wedge \cdots \wedge v_r\|$
where $v_1, \ldots, v_r$ is a generating set for $\Lam$.
%
%To see this, for any dimension $1\le
%k\le n$ consider the representation $\pi_k$  
%of $G$ on $V_k\defi\bigwedge^k\bR^n$ induced by the standard
%representation on $\R^n$. By continuity one can choose a symmetric 
%open neighborhood $W=W_{c,k}$ of 
%$e\in G$ such that for any $g\in W$, $\norm{\pi_k(g)}\le c$
%(where $\norm{\cdot}$ denotes the operator 
%norm).  Any discrete subgroup $\Lam \subset \bR^n$  of rank $k$ corresponds to
%a vector $w_{\Lam}\in V_k$ and 
%$\frac{\av{g\Lam}^{1/k}}{
%  \av{\Lam}^{1/k}}=\frac{\norm{\pi_k(g)w_{\Lam}}}{\norm{w_{\Lam}}}$.  
%Combined with the bound on the norm of $\pi(g)$ we obtain the right
%inequality in~\eqref{22.2.2}. The left inequality is  
%obtained by applying the right inequality to $\Lam'=g^{-1}\Lam$.
%From here we see that~\eqref{22.2.2} holds for
%any $ W \defi\cap_{k=1}^d W_{c,k}$.
%
It follows from~\eqref{22.2.2} that for any $x\in \Xn$ and $g\in W$ we have 
\eq{22.2.3}{
c^{-1}\al(x)\le \al(gx)\le c\al(x).
}
 Let $\del\in \left(\del_0-\frac{\rho}{2}, \del_0+\frac{\rho}{2}
 \right)$ and $g\in W$. We will show below that 
 \begin{equation}\label{22.2.1'}
g\on{Min}_{\del_0-\rho}(x)\subset
\on{Min}_{\del}(gx)\subset
g\on{Min}_{\del_0+\rho}(x). 
\end{equation}
%new addition
Note first that 
\equ{22.2.1'} implies the assertion of the Lemma; indeed, since
$\tb{V}_{\del_1}(x) \subset \tb{V}_{\del_2}(x)$ for $\delta_1 < \delta_2$, and since we assumed
that 
$\dim_{\delta_0-\rho}(x)=\dim_{\delta_0+\rho}(x)$, 
%$\dim \tb{V}_{\del}(x)= k$ for 
%$\del\in \left[\del_0-\rho, \del_0+\rho \right]$,
we see that 
$\tb{V}_{\del_0}(x)=\tb{V}_{\del}(x)$ for $\delta_0-\rho \leq \delta
\leq \delta_0+\rho$. So by \equ{eq: defn c}, the
subspaces spanned by the two sides of \eqref{22.2.1'}
are equal to $g\tb{V}_{\del_0}(x)$ and \eqref{eq 1806}
follows. 

It remains to prove~\eqref{22.2.1'}. Let
$\Lam\in\on{Min}_{\del_0-\rho}(x)$. Then we find 
%Using the definition and~\eqref{22.2.2},\eqref{22.2.3} we conclude that
\[
\begin{split}
\av{g\Lam}^{\frac{1}{r(g\Lam)}} & \stackrel{\eqref{22.2.2}}{\leq}
c\av{\Lam}^{\frac{1}{r(\Lam)}} \leq c(1+\delta_0 -\rho) \alpha(x) \\ &
\stackrel{\equ{eq: defn c}}{\le}
c^{-1}\left(1+\del_0-\frac{\rho}{2} \right)\al(x) \stackrel{\equ{22.2.3}}{<}(1+\del)\al(gx).
\end{split}\]
By definition this means that $g\Lam\in\on{Min}_{\del}(gx)$ which
establishes the first  inclusion in \eqref{22.2.1'}. The second
inclusion is similar and is left to the reader.  
\ignore{
For the second inclusion, let $\Lam \subset x$ such that
$g\Lam\in\on{Min}_{(\del)}(gx)$. Using the definition and~\eqref{22.2.2},\eqref{22.2.3} we conclude that 
\[\av{\Lam}^{\frac{1}{r(\Lam)}} \leq c\av{g\Lam}^{\frac{1}{r(g\Lam)}}\le
c(1+\del)\al(gx)< c^2 (1+\del_0+\frac{\rho}{2})\al(x).\]
I.e.\ $\Lam\in\on{Min}_{c^2(1+\del_0+\frac{\rho}{2})}(x)$ which
establishes the right inclusion in~\eqref{22.2.1'}. }
\end{proof}

\subsection{The cover of $A$} 
Let $x\in \Xn$ and let $\vre>0$ be given. Define
$\cU^{x,\vre}=\left\{U^{x,\vre}_i \right\}_{i=1}^n$ where 
\begin{equation}\label{the cover}
U^{x,\vre}_k\defi\set{a\in A: \on{dim}_\del(ax)=k\textrm{ for $\del$
    in a neighborhood of }k\vre}. 
\end{equation} 
\begin{theorem}\Name{order of cover}
Let $x\in \Xn$ be such that $Ax$ is bounded. Then for any $\vre \in (0,1)$, 
$U^{x,\vre}_n\neq \varnothing.$ 
\end{theorem}
In this subsection we will reduce the proof of Theorem \ref{thm: main}
to Theorem \ref{order of cover}. This will be done via the following
statement, %Corollary
%\ref{hitting stable cor} 
which could be interpreted as saying that a 
lattice satisfying $ \dim_{\delta}(x)=n
$ is `almost stable'. 

\begin{lemma}\label{not growing lemma}
For each $n$, there exists a positive function $\psi(\del)$ with
$\psi(\del) \to_{\del\to 0}0$, such that for any $x\in \Xn$,
% if
\eq{eq: lemma first part}{\set{\Lam_i}_{i=1}^\ell\subset\on{Min}_{\del}(x) \ \implies \ 
%then $
\Lam_1+\dots+\Lam_\ell\in\on{Min}_{\psi(\del)}(x).
}
In particular, if $\dim_\del(x)=n$ then $\al(x)\ge (1+\psi(\del))^{-1}$.  
\end{lemma}
\begin{proof}
Let $\Lam,\Lam'$ be two discrete subgroups of $\bR^d$.
The following inequality is straightforward to 
prove via the Gram-Schmidt procedure for computing $|\Lam|$: 
\begin{equation}\label{volume formula}
\av{\Lam+\Lam'}\le\frac{\av{\Lam}\cdot\av{\Lam'}}{\av{\Lam\cap\Lam'}}.
\end{equation}
Here we adopt the convention that  $\av{\Lam\cap\Lam'}=1$ when
$\Lam\cap\Lam'=\set{0}$. 
Let $x\in \Xn$ and let $\set{\Lam_i}_{i=1}^\ell\subset
\on{Min}_{\del}(x)$. Assume first that  
$\ell\le n$. We prove by induction on $\ell$ the existence of a
function  $\psi_\ell(\del)\overset{\del\to0}{\lra}0$  
for which
$\Lam_1+\dots+\Lam_\ell\in\on{Min}_{\psi_\ell(\del)}(x)$. For
$\ell=1$ one can trivially pick $\psi_1(\del)=\del$.  
Assuming the existence of $\psi_{\ell-1}$,  set $\Lam=\Lam_1$,
$\Lam'=\Lam_2+\dots+\Lam_\ell$, $\al=\al(x)$ and note  
that $r(\Lam+\Lam')=r(\Lam)+r(\Lam')-r(\Lam\cap\Lam')$. We deduce
from~\eqref{volume formula} and the definitions that 
\begin{align}\label{eneq2007}
\nonumber \av{\Lam+\Lam'}&\le
\frac{\av{\Lam}\cdot\av{\Lam'}}{\av{\Lam\cap\Lam'}}
\le
\frac{\pa{(1+\del)\al}^{r(\Lam)}\pa{(1+\psi_{\ell-1}(\del))\al}^{r(\Lam')}}{\al^{r(\Lam\cap\Lam')}}\\  
&=(1+\del)^{r(\Lam)}(1+\psi_{\ell-1}(\del))^{r(\Lam')}\al^{r(\Lam+\Lam')}.
\end{align}
Hence, if we set 
$$\psi_\ell(\del)\defi \max
\pa{(1+\del)^{r(\Lam)}(1+\psi_{\ell-1}(\del))^{r(\Lam')}}^{\frac{1}{r(\Lam+\Lam')}}-1,$$
where  
the maximum is taken over all possible values of $r(\Lam), r(\Lam'),
r(\Lam+\Lam')$ then $\psi_\ell(\del)\lra_{\del\to 0}0$  
and~\eqref{eneq2007} implies that 
$\Lam+\Lam'\in\on{Min}_{\psi_\ell(\del)}(x)$ as desired. We take
$\psi(\del)\defi\max_{\ell=1}^n\psi_\ell(\del).$ 
Now if $\ell >n$  one can find a subsequence 
$1\le i_1<i_2\dots<i_d\le n$  such that
$r(\sum_{i=1}^\ell\Lam_i)=r(\sum_{j=1}^d\Lam_{i_j})$ and in
particular,  
$\sum_{j=1}^d\Lam_{i_j}$ is of finite index in $\sum_{i=1}^\ell\Lam_i$. From the first part of the 
argument we see that $\sum_{j=1}^d\Lam_{i_j}\in \on{Min}_{\psi(\del)}(x)$ and as the covolume of 
$\sum_{i=1}^\ell\Lam_i$ is not larger than that of $\sum_{j=1}^d\Lam_{i_j}$ we deduce that 
$\sum_{i=1}^\ell\Lam_i \in\on{Min}_{\psi_\ell(\del)}(x)$ as well.

To verify the last assertion, note that
when $\dim_\del(x)=n$, \equ{eq: lemma first part} implies the
existence of a finite index subgroup $x'$ 
of $x$ belonging to $\on{Min}_{\psi(\del)}(x)$. In particular, 
$1\leq \av{x'}^{\frac{1}{n}}\le(1+\psi(\del))\al(x)$ as desired.
\end{proof}

\ignore{
The following statement essentially 
says that a lattice $x$ satisfying $\dim_\del(x)=n$ with $\del$ small
can be considered to be `almost stable'.
\begin{corollary}\label{hitting stable cor}
If $x_j\in \Xn$ satisfies $\dim_{\del_j}(x_j)=n$ for some sequence
$\del_j \to 0$, then any accumulation point of 
$\set{x_j}$ is a stable lattice.
\end{corollary}
\begin{proof}
By Lemma~\ref{not growing lemma} we have  
$$1\ge
\limsup\al(x_j)\ge \liminf \al(x_j)\ge \lim (1+\psi(\del_j))^{-1}=1,$$ 
which shows that $\lim\al(x_j)=1$.  
The function $\al$ is continuous on $\Xn$ and therefore if $x$  is an
accumulation point of $\set{x_j}$ then $\al(x)=1$, i.e. $x$ is stable.
\end{proof}
}
\begin{proof}[Proof of Theorem~\ref{thm: main} assuming
  Theorem~\ref{order of cover}] 
By Proposition~\ref{copt red prop} we may assume that $Ax$ is
bounded. Let $\vre_j \in (0,1)$ so that $\vre_j \to_j 0$. By
Theorem~\ref{order of cover} we know that
$U^{x,\vre_j}_n %\in\cU^{x,\vre_j} 
\neq \varnothing$. This means there is a sequence $a_j\in A$ such that 
$\dim_{\del_j}(a_jx)=n$
where $\del_j=n\vre_j\to 0$. 
The sequence $\set{a_jx}$ is
  bounded, and hence has limit points, so passing to a subsequence we
  let $x' \df \lim a_jx.$ 
By Lemma~\ref{not growing lemma} we have  
$$1\ge
\limsup_j\al(a_jx)\ge \liminf_j \al(a_j x)\ge \lim_j (1+\psi(\del_j))^{-1}=1,$$ 
which shows that $\lim_j\al(a_j x)=1$.  
The function $\al$ is continuous on $\Xn$ and therefore $\al(x')=1$,
i.e. $x' \in \overline{Ax}$ is stable.
\end{proof}
\section{Covers of Euclidean space}\Name{establishing
  topological input}  
In this section we will prove Theorem \ref{order of cover}, thus
completing the proof of Theorem \ref{thm: main}. Our main
tool will be McMullen's 
Theorem~\ref{topological input}. Before stating it we introduce some
terminology. We fix an invariant metric on $A$, and let $R>0$ and $k \in \{0, \ldots, n-1\}$. 
\begin{definition}\Name{def: almost affine}
We say that a subset $U\subset A$ is $(R,k)$-\textit{almost affine} if it is
contained in an $R$-neighborhood of a coset of a connected $k$-dimensional 
subgroup of $A$. 
%$H \subset A$ and an element $a\in A$ such that $U\subset B_R(a)H$,
%where $B_R(a)$ denotes the ball of radius $R$ around $a$ in $A$.  
\end{definition}
\begin{definition}\Name{def: inradius}
An open cover $\cU$ of $A$ is said to have \textit{inradius} $r>0$ if
for any $a\in A$ there exists $U\in\cU$ such that $B_r(a)\subset
U$, where $B_r(a)$ denotes the ball in $A$ of radius $r$ around $a$. 
\end{definition}
\begin{theorem}[Theorem 5.1 of~\cite{McMullenMinkowski}]\Name{topological input}
Let $\cU$ be an open cover of $A$ with inradius $r>0$ and let
$R>0$. Suppose that for any $1\le k\le n-1$, 
every connected component $V$ of the intersection of  
$k$ distinct elements of $\cU$ is
$(R,(n-1-k))$-almost affine. Then there is 
a point in $A$ which belongs to  
at least $n$ distinct elements of $\cU$. In particular, there are at
least $n$ distinct non-empty sets in $\cU$. 
\end{theorem}

The hypotheses of McMullen's theorem were slightly weaker but the
version above is sufficient for our purposes. 
We give a different proof of 
Theorem \ref{topological input} in this paper; namely it follows from
the more general Theorem
\ref{thm: covering}, which is proved
in Appendix \ref{appendix: Levin}. 

\ignore{
\begin{proposition}\label{assumptions hold}
If $x\in \Xn$ has a bounded $A$-orbit and $\vre>0$ then the collection
$\cU^{x,\vre}$ is an open cover of $A$ with positive inradius  
such that at least one of the following two possibilities hold:
\begin{enumerate}
\item $U^{x,\vre}_n\ne\varnothing$.
\item The hypothesis of Theorem~\ref{topological input} are satisfied.
\end{enumerate}
\end{proposition}
\begin{proof}[Proof of Theorem~\ref{order of cover} assuming
  Proposition~\ref{assumptions hold}] 
By the Proposition the possibility that $U^{x,\vre}_d=\varnothing$ is
ruled out as if this is the case then we may apply  
Theorem~\ref{topological input} and deduce that $\cU^{x,\vre}$ must
contain at least $n$ non-empty sets and in particular,  
$U_n^{x,\vre}\ne\varnothing$ which contradicts our assumption.
\end{proof} 
}

\subsection{Verifying the hypotheses of Theorem \ref{topological
    input} }
Below we fix a compact set $K\subset \Xn$ and a lattice $x$ for which
$Ax\subset K$. Furthermore, we fix $\vre>0$ and denote  
the collection $\cU^{x, \vre}$ defined in~\eqref{the cover} by
$\cU=\set{U_i}_{i=1}^n$.
\begin{lemma}\Name{lem: positive inradius}
The collection $\cU$ forms an open cover of $A$ with positive inradius.
\end{lemma}
\begin{proof}
The fact that the sets $U_i\subset A$ are open follows readily from
the requirement in~\eqref{the cover} that $\on{dim}_\del$ is constant
in a neighborhood of $\del=k\vre$.  
Given $a\in A$, let $1\le k_0\le n$ be the minimal number $k$ for which
$\dim_{(k+\frac{1}{2})\vre}(ax)\le k$  
(this inequality holds trivially for $k=n$). 
From the minimality of $k_0$ we conclude that 
%$\dim_{\pa{k_0-\frac{1}{2}}\vre }(x)=\dim_{\pa{k_0+\frac{1}{2}}\vre} (x) = k_0$.
$\dim_\del(ax)=k_0$ for
any  
$\del\in \left[\pa{k_0-\frac{1}{2}}\vre,\pa{k_0+\frac{1}{2}}\vre \right]$. 
This
shows that $a\in U_{k_0}$ so 
%by~\eqref{the cover} and establishes 
%the fact that 
$\cU$ is indeed a cover of $A$. 

We now show that the cover has positive inradius. 
%For $k=1, \ldots, n$
%let $\rho_k \df \frac{k \vre}{2}$.
% Since $K$ is compact, $\alpha(y)$ is bounded
%from below for $y \in K$. This implies (see e.g. \combarak{give ref to
%  Cassels, upper bound on $n$-th successive minimum in terms of lower
%  bound on first}) that there is $C$ so that for all $\delta > C$ and
%$y \in K$, $\dim_{\delta}(y) =n$. 
Let $W \subset G$ be the open neighborhood of the identity obtained
from 
%defined by
%$W\defi \cap_{k=1}^n W_k$, where $W_k$ is  
%obtained by applying 
Lemma~\ref{for the inradius} for  $\rho \df \frac{\vre}{2}$.
%, and let $W\defi
%\cap_{k=1}^n W_k$. 
%Note that b
%Then Using Lemma~\ref{for the inradius} with $\delta_0 = k_0 \vre$, we
Taking $\delta_0 \df k_0 \vre$ we find that 
for any 
%(which depends only on $\vre$) 
$g\in W$,   
$\del\in \pa{\pa{k_0-\frac{1}{4}}\vre,\pa{k_0+\frac{1}{4}}\vre}$ we
have that $\dim_\del(gax)=k_0$. This shows that  
$(W\cap A)a\subset U_{k_0}$. Since $W\cap A$ is an open neighborhood
of the identity in $A$ and the metric on $A$ is invariant under  
translation by elements of $A$, there exists $r>0$ 
(independent of $k_0$ and $a$) so that $B_r(a)\subset U_{k_0}$. In
other words, the inradius of $\cU$ is positive as desired. 
\end{proof}

%In order to conclude the proof of Proposition~\ref{assumptions hold} we are left to 
%establish -- under the assumption that $U_n=\varnothing$ --
The following will be used for verifying the second hypothesis of
Theorem~\ref{topological input}.
%; namely that 
%there exists $R>0$ such that any connected component of an
%intersection of $k$ elements of $\cU$ is $(R,d-1-k)$-almost affine. Since we
%assume $U_d=\varnothing$ we deduce that any such connected component
%will  
%be contained in a connected component of $U_j$ with $j\le d-k$. It
%follows (on replacing $d-k$ by $k$) that under our assumptions 
%it is enough to establish the following.
\begin{lemma}\Name{flat things}
There exists $R>0$ such that any connected component of $U_k$ is $(R,k-1)$-almost affine.
\end{lemma}
\begin{definition}\label{cv}
For a discrete subgroup $\Lam \subset \bR^d$ of rank $k$, 
let $$c(\Lam)\defi\inf\set{\av{a\Lam}^{1/k}:a\in A},$$ and say that
$\Lam$ is {\it incompressible} if $c(\Lam)>0$. 
\end{definition}
Lemma~\ref{flat things} follows from:
\begin{theorem}[{\cite[Theorem 6.1]{McMullenMinkowski}}]\Name{finite
    distance from a group} 
For any positive $c,C$ there exists $R>0$ such that if 
$\Lam \subset \bR^n$ is an incompressible discrete subgroup of rank
$k$ with $c(\Lam)\ge c$ then  
$\set{a\in A: \av{a\Lam}^{1/k}\le C}$ is $(R,j)$-almost affine for some $j\le
\gcd(k,n)-1$. 
\end{theorem}
%By Theorem~\ref{finite distance from a group} Lemma~\ref{flat things} follows from the following.
\ignore{
\begin{lemma}\label{ending lemma}
There are positive constants $c,C$ such that if  $V\subset U_k$ is a
connected component, then there exists 
$\Lam \subset x$ with $c(\Lam)>c$ such that  $V\subset\set{a\in A: \av{a\Lam}^{1/k}\le C}$.
\end{lemma} 

The following Lemma gives us the lower bound $c$ that appears in
Lemma~\ref{ending lemma}. This is the only place in the proof 
where the boundedness of the orbit $Ax$ really necessary.
\begin{lemma}\label{why bounded}
There exists a constant $c>0$ (that depends only on the compact set
$K$ which contains $Ax$), such that  
for any discrete subgroup $\Lam \subset x$ we have that $c(\Lam)\ge c$.
\end{lemma}
\begin{proof}
Let $\rho>0$ be a lower bound for the lengths of non-zero vectors
belonging to the lattices in $K$ (by Mahler's criterion 
the compactness of $K$ implies the existence of such $\rho$). Observe
that there is an upper bound $\ell_d$ (related to the  
so called Hermite constants) on the lengths of the shortest non-zero
vectors of discrete subgroups $\Lam<\bR^d$ satisfying
$\av{\Lam}=1$. This in turn implies that for $a\in A$, $\Lam<x$ we
must have that $\rho\av{a\Lam}^{-1/r(\Lam)}\le\ell_d$,  
or equivalently $\frac{\rho}{\ell_d}\le\av{a\Lam}^{1/r(\Lam)}$, which
concludes the proof.
\end{proof}

For any $1\le k\le d$, write $\tb{gr}_k$ for the Grassmannian of
$k$-dimensional subspaces of $\bR^d$.  
Define a map $\cM:U_k\to \tb{gr}_k$ by 
$$U_k\ni a\mapsto \cM(a)\defi a^{-1}\tb{V}_{(1+k\vre)}(ax).$$ 
\begin{lemma}\Name{locally constant}
The function $\cM$ is locally constant on $U_k$.
\end{lemma}
\begin{proof}
By definition, the fact that $a_0\in U_k$ means that there exists
$0<\rho<k\vre$ such that 
$\dim_\del(a_0x)=k$ for any $\del\in(k\vre-\rho,k\vre+\rho)$. Applying
Lemma~\ref{for the inradius}  
for the lattice $a_0x$ with $\rho$ and $\del_0=k\vre$
we see by~\eqref{eq 1806} that for any $a$ in a certain neighborhood
of the identity  
\begin{align*}
\cM(aa_0)&=a_0^{-1}a^{-1}\tb{V}_{(1+k\vre)}(aa_0x)
=a_0^{-1}\tb{V}_{(1+k\vre)}(a_0x)=\cM(a_0).
\end{align*}
\end{proof}

\begin{proof}[Proof of Lemma~\ref{ending lemma}]
Let $c=\inf c(\Lam)$ where the infimum 
is taken over all discrete subgroups  $x$. By Lemma~\ref{why bounded} we have that 
$c>0$.

Let $\Lam=\cM(a)\cap x$ where $a\in V$ is chosen arbitrarily. By
Lemma~\ref{locally constant} $\Lam$ is independent of the choice of
$a\in V$.  
Given $a\in V$, by the definition of $\Lam$ and $\cM(a)$ we see that 
\begin{align*}
a\Lam&=a(x\cap\cM(a)) =a(x\cap a^{-1}\tb{V}_{(1+k\vre)}(ax))=ax\cap\tb{V}_{(1+k\vre)}(ax).
\end{align*}
By Lemma~\ref{not growing lemma} we have that 
\begin{align*}
\av{a\Lam}^{1/k}=\av{
  ax\cap\pa{\tb{V}_{(1+k\vre)}(ax)}}^{1/k}<(1+\psi(k\vre))\al(ax)\le
C, 
\end{align*}
where $C$ is an absolute constant that depends only on the dimension
$d$ (because $\al$ is bounded by 1 and $\psi$ is bounded  
and depends only on $d$). This finishes the proof of the Lemma and by
that concludes the proof of Proposition~\ref{assumptions hold} as
well. 

\end{proof}
}
\begin{proof}[Proof of Lemma~\ref{flat things}]
We first claim that there exists $c>0$ such that  
for any discrete subgroup $\Lam \subset x$ we have that $c(\Lam)\ge
c$. To see this, recall that $Ax$ is contained in a compact subset
$K$, and hence by Mahler's compactness criterion,  there is a positive
lower bound on 
%$\rho>0$ such that
the length of any non-zero vector
belonging to a lattice in $K$. %has length at most $\rho$.
On the other
hand, Minkowski's convex body theorem shows that the shortest nonzero
vector in a discrete subgroup $\Lambda \subset \R^n$ is bounded above
by a constant multiple of $|\Lam|^{1/r(\Lam)}$. This implies the
claim.

In light of Theorem \ref{finite
    distance from a group}, it suffices to show that there is $C>0$
such that  if $V\subset U_k$ is a
connected component, then there exists 
$\Lam \subset x$ such that  $V\subset\set{a\in A: \av{a\Lam}^{1/k}\le C}$.
For any $1\le k\le n$, write $\tb{gr}_k$ for the Grassmannian of
$k$-dimensional subspaces of $\bR^n$.  
Define
$$
\cM:U_k\to \tb{gr}_k, \ \ 
%U_k\ni a\mapsto 
\cM(a)\defi a^{-1}\tb{V}_{k\vre}(ax).$$ 
Observe that $\cM$ is locally constant on $U_k$. Indeed, by
definition of $U_k$, 
for $a_0\in U_k$ there exists 
$0<\rho< \frac{\vre}{2}$ such that 
$\dim_\del(a_0x)=k$ for any $\del\in(k\vre-\rho,k\vre+\rho)$. Applying
Lemma~\ref{for the inradius}  
for the lattice $a_0x$ with $\rho$ and $\del_0=k\vre$
we see %by~\eqref{eq 1806} 
that for any $a$ in a neighborhood
of the identity in $A$, 
\begin{align*}
\cM(aa_0)&=a_0^{-1}a^{-1}\tb{V}_{k\vre}(aa_0x)
=a_0^{-1}\tb{V}_{k\vre}(a_0x)=\cM(a_0).
\end{align*}

Now let $\Lam \df x\cap \cM(a)$ where $a\in V$; $\Lam$ is well-defined
since $\cM$ is locally
constant. 
%Lemma~\ref{locally constant} 
%$\Lam$ is independent of the choice of
%$a\in V$.  
Then %By the definitions we see that
for $a \in V$, % of $\Lam$ and $\cM(a)$ we see that 
\begin{align*}
a\Lam&=a(x\cap\cM(a)) =a(x\cap a^{-1}\tb{V}_{k\vre}(ax))=ax\cap\tb{V}_{k\vre}(ax).
\end{align*}
By Lemma~\ref{not growing lemma} we have that 
\begin{align*}
\av{a\Lam}^{1/k}=\av{
  ax\cap \tb{V}_{k\vre}(ax)}^{1/k}<(1+\psi(k\vre))\al(ax). 
\end{align*}
Since $\alpha(ax) \leq 1$ we may take $C \df 1+\psi(k\vre)$ to
complete the proof. 
%where $C$ is an absolute constant that depends only on the dimension
%$n$ (because $\al$ is bounded by 1 and $\psi$ is bounded  
%and depends only on $n$). 
%This finishes the proof of the Lemma and by
%that concludes the proof of Proposition~\ref{assumptions hold} as
%well. 
%
% It follows
%(see e.g. \combarak{give a reference to the relevant place in
%  Cassels}) 
%that there is an upper bound $\ell_n$ (related to the  
%so called Hermite constants) on the length of the shortest non-zero
%vectors of discrete subgroups $\Lam \subset \bR^n$ satisfying
%$\av{\Lam}=1$. This in turn implies that for $a\in A$, $\Lam \subset
%x$ we have $\rho\av{a\Lam}^{-1/r(\Lam)}\le\ell_n$,   
%or equivalently $\frac{\rho}{\ell_n}\le\av{a\Lam}^{1/r(\Lam)}$, which
%proves the claim.
\end{proof}

\begin{proof}[Proof of Theorem \ref{order of cover}]
Assume by contradiction that $Ax$ is bounded but $U_n^{x, \vre} =
\varnothing$ for some $\vre \in (0,1)$. Then by Lemma \ref{lem: positive inradius}, 
$$\cU \df \left \{U_1,
\ldots, U_{n-1} \right \}, \text{ where } U_j \df U_j^{x, \vre}, $$  
is a cover of $A$ of positive inradius. Moreover, if $V$ is a
connected component of $U_{j_1} \cap \cdots \cap U_{j_k}$ with $j_1 < \cdots < j_k \leq n-1$, 
then $V_k \subset U_{j_1}$ and $j_1 \leq n-k$. So in
light of Lemma \ref{flat things}, 
%such that at least one of the following two possibilities hold:
%\begin{enumerate}
%\item $U^{x,\vre}_n\ne\varnothing$.
%\item T
the hypotheses of Theorem~\ref{topological input} are satisfied.
%\end{enumerate}
%\end{proposition}
%\begin{proof}[Proof of Theorem~\ref{order of cover} assuming
%  Proposition~\ref{assumptions hold}] 
%By the Proposition the possibility that $U^{x,\vre}_d=\varnothing$ is
%ruled out as if this is the case then we may apply  
We 
%So by Theorem~\ref{topological input} we %and 
deduce that $\cU = \left\{U_1, \ldots, U_{n-1} \right \}$
contains at least $n$ elements, which is impossible. % and in particular,  
%$U_n^{x,\vre}\ne\varnothing$, 
% which contradicts our assumption.
\end{proof}

\section{Bounds on Mordell's constant}\Name{sec: rankin bounds} 
In analogy with~\eqref{alpha} we define for
any $x\in \Xn$ and $1\le k\le n$,  
\begin{align}\Name{eq: k quantities}
\cV_k(x)&\defi\set{\av{\Lam}^{1/r(\Lam)}:\Lam \subset x, r(\Lam)=k},\\ 
\al_k(x)&\defi\min\cV_k(x). %,\\
%\cS_k&\defi\set{x\in \Xn: \al_k(x)\ge 1}.
\end{align}

The following is clearly a consequence of Theorem \ref{thm: main}:
\begin{cor}\Name{cor: Euclidean}
For any $x \in \Xn$, any $\vre>0$  and any $k \in \{1, \ldots, n\}$ there is $a \in
A$ such that $\alpha_k(ax) \geq 1-\vre$. 
\end{cor}
As the lattice $x = \Z^n$ shows, the constant 1 appearing in
this corollary cannot be improved for any $k$. Note also that the case
$k=1$ of 
Corollary \ref{cor: Euclidean}, although not stated explicitly in
\cite{McMullenMinkowski}, could be derived easily from McMullen's results in
conjunction with \cite{BirchSD}. 

\begin{proof}[Proof of Corollary \ref{cor: Ramharter conj}]
 Since the $A$-action maps a symmetric box $\mathcal{B}$ to a
 symmetric box of the same volume, the function $\kappa : \Xn \to \R$
 in \equ{eq: defn const} is $A$-invariant. By the case $k=1$ of
 Corollary \ref{cor: Euclidean}, for any $\vre>0$ and any $x \in \Xn$
 there is $a \in A$ such that $ax$ does not contain nonzero vectors of
 Euclidean length at most $1-\vre$, and hence does not contain nonzero vectors
 in the cube $\left [-\left(\frac{1}{\sqrt{n}} - \vre\right),
 \left(\frac{1}{\sqrt{n}} - \vre\right) \right ]^n$. This implies that
$\kappa(x) \geq \left(\frac{1}{\sqrt{n}} \right)^n$, as claimed. 
\end{proof}

We do not know whether the bound $\kappa_n \geq n^{-n/2}$ is
asymptotically optimal. However, it is not optimal for any fixed
dimension $n$:
\begin{proposition} \Name{prop: not optimal}
For any $n$, $\kappa_n > n^{-n/2}$. 
\end{proposition}
\begin{proof}
It is clear from the definition of the functions $\kappa$ and
$\alpha_k$ that if $x_j \to x_0$ in $\Xn$, then 
$$\kappa(x_0) \leq \liminf_j \kappa(x_j) \ \ \text{and } \
\alpha_k(x_0) \geq \limsup_j \alpha_k(x_j).$$
A simple compactness argument implies that the
infimum in \equ{eq: defn kappan} is attained, that is there is 
$x \in \Xn$ such that $\kappa_n = \kappa(x)$; moreover, for any $x_0
\in \overline{Ax}, \kappa(x_0) = \kappa(x)=\kappa_n$. Using the case $k=1$ of Corollary
\ref{cor: Euclidean}, we let $x_0$ be a stable lattice in
$\overline{Ax}$ such that $\alpha_1(x_0)
\geq 1$.  That is, $x_0$ contains no vectors in the open unit
Euclidean ball, so the open cube 
$C \df \left( -\frac{1}{\sqrt{n}},   \frac{1}{\sqrt{n}}\right)^n$ is
admissible. Moreover, the only possible vectors in $x_0$ on $\partial
\, C$ are on the corners of $C$, so there is $\vre>0$ such that the box
$C' \df \left( -\frac{1}{\sqrt{n}},   \frac{1}{\sqrt{n}}\right)^{n-1} \times
\left(-\left(\frac{1}{\sqrt{n}} + \vre \right) ,
  \frac{1}{\sqrt{n}}+ \vre\right)$ is also admissible. Taking closed
boxes $\cB \subset C'$ with volume arbitrarily close to that of $C'$,
we see that 
$$
\kappa_n = \kappa(x_0) \geq \frac{\Vol(C')}{2^n} > n^{-n/2}.
$$ 
%Applying another compactness argument as in the proof of
%\cite[Corollary 2.3]{gruber}, we may assume that $x_0$ does not
%contain points in the open cube $C \df \left( -\frac{1}{\sqrt{n}},
%  \frac{1}{\sqrt{n}}\right)^n$. We define the relative interiors of
%the boundary faces of $C$ to be the sets 
%$$
%F_i \df \left\{(v_1, \ldots, v_n): v_i = \frac{1}{\sqrt{n}}, \forall j
%    \neq i, |v_j| < \frac{1}{\sqrt{n}} \right\}.
%$$
%Since $\kappa(x_0) = \kappa_n$, 
%each $F_i$ must contain vectors of $x_0$, otherwise we could enlarge
%$C$ in the directions of the $i$-th coordinate axis and
%increase the value of $\kappa(x_0)$. But the vectors in each $F_i$ have Euclidean
%length strictly smaller than $1$, contradicting the fact that
%$\alpha_1(x_0) \geq 1$. 
\end{proof}

\ignore{
We take this opportunity to mention another connection between the
Mordell constant and the dynamics of the $A$-action on
$\Xn$. 

\begin{proposition}\Name{prop: conjecture minimizers}
There is $x \in \Xn$ with a bounded n $\kappa_n$ is attained on a compact $A$-orbit. 
\end{proposition} 
\begin{proof}

\end{proof} 

The following is a well-known conjecture:
$$
\text{(CSDM)} \ \ \text{any bounded } A \text{-orbit on } \Xn \text{ is compact.} 
$$
This first appeared in the paper \cite{} of 
 of Cassels and
Swinnerton-Dyer, and was recast in dynamical terms by Margulis in
\cite{}. Moreover compact
$A$-orbits correspond to algebraic lattices obtained from orders in
totally real number fields, see \cite{LW}.
}
Our next goal is Corollary \ref{cor: 1 mod 4} which gives an explicit
lower bound on $\kappa_n$, which 
improves \equ{eq: our bound} for $n$ congruent to 1 mod 4. To obtain our bound 
we treat separately lattices with bounded or unbounded
$A$-orbits. If $Ax$ is unbounded we bound $\kappa(x)$ by using an inductive
procedure and the work of Birch and Swinnerton-Dyer, as in \S
\ref{sec: reduction to compact orbits}.  In the bounded case we use arguments of
McMullen and known 
upper bounds for Hadamard's determinant problem. Our method
applies with minor modifications whenever $n$ is not divisible by
4. 
We begin with an analogue of Lemma \ref{lem: block stable is stable}. 

\begin{lemma}\Name{lem: bound on kappa in blocks}
Suppose $x = [g] \in \Xn$ with $g$ in upper triangular block form as
in \equ{block form}. Then $\kappa(x) \geq \prod_1^k \kappa
([g_i])$. In particular $\kappa(x) \geq \prod_1^k n_i^{-n_i/2}.$ 
\end{lemma}
\begin{proof}
By induction, it suffices to prove the Lemma in case $k=2$. In this
case there is a direct sum decomposition $\R^n = V_1 \oplus V_2$ where
the $V_i$ are spanned by standard basis vectors, and if we write $\pi:
\R^n \to V_2$ for the corresponding projection, then $[g_1] = x \cap
V_1, [g_2] = \pi(x)$. Write $\kappa^{(i)} \df \kappa([g_i])$. Then for
$\vre>0$, 
there are symmetric boxes $\mathcal{B}_i \subset V_i$ 
such that $\mathcal{B}_i$ is admissible for $[g_i]$ and 
$$\Vol(\mathcal{B}_i) \geq \frac{\kappa^{(i)} - \vre}{2^{n_i}}.$$
We claim that $\mathcal{B} \df \mathcal{B}_1 \times \mathcal{B}_2$ is
admissible for $x$. To see this, suppose $u \in x \cap
\mathcal{B}$. Since $\pi(u) \in \mathcal{B}_2$ and $\mathcal{B}_2$ 
is admissible for $\pi(x) = [g_2]$ we must have
$\pi(u) =0$, i.e. $u \in x \cap V_1 = [g_1]$; since
$\mathcal{B}_1$ is admissible for $[g_1]$ we must have $u=0$. 

This implies
$$\kappa(x) \geq 2^n \Vol(\mathcal{B})  = 2^{n_1} \Vol(\mathcal{B}_1)
\cdot 2^{n_2} 
\Vol(\mathcal{B}_2) \geq (\kappa^{(1)} -
  \vre)(\kappa^{(2)}-\vre),$$ 
and the result follows taking $\vre \to 0$. 
\end{proof}

\begin{corollary}\Name{cor: kappa unbounded orbits}
If $x \in \Xn$ is such that $Ax$ is unbounded then 
\eq{eq: kappa star}{
\kappa(x) \geq %\kappa^*_n \df \min \left\{\prod_1^k n_i^{-n_i/2}: \sum n_i = n, n_i \in \bN,
%k \geq 2 \right\}.
(n-1)^{-(n-1)/2}.
}
\end{corollary}
\begin{proof}
If $Ax$ is unbounded then by \cite{BirchSD}, up to a permutation of
the axes, there is $x' \in
\overline{Ax}$ so that $x' = [g]$ is in upper triangular form, with $k
\geq 2$ blocks. Let the
corresponding parameters as in \equ{block form} be $n= n_1+ \cdots
+n_k$. Since $\kappa(x)
\geq \kappa(x')$, by Lemma \ref{lem: bound on kappa in blocks} it
suffices to prove 
that 
\eq{eq: suffices to prove that}{
\prod_{i=1}^k \frac{1}{n_i^{n_i/2}} \geq \frac{1}{(n-1)^{\frac{n-1}{2}}}.
}
It is easy to check that for $j=1, \ldots, n-1$, 
$$
j^{\frac{j}{2}}(n-j)^{\frac{n-j}{2}} \leq (n-1)^{\frac{n-1}{2}},
%\frac{1}{(n-1)^{(n-1)/2}} \leq
%\frac{1}{j^{j/2}} \frac{1}{(n-j)^{(n-j)/2}},
$$ and the case $k=2$ of \equ{eq: suffices to prove that} follows. 
By induction on $k$ one then shows that 
$
\prod_{i=1}^k n_i^{-n_i/2} \geq (n-k+1)^{-\frac{n-k+1}{2}}
$
and this implies \equ{eq: suffices to prove that} for all $k\geq 2$. 
\end{proof}
To treat the bounded orbits we will use known bounds on the Hadamard
determinant problem, which we now
recall. Let 
\eq{eq: defn hn}{
h_n \df \sup \left \{ |\det (a_{ij})|: \forall i,j \in \{1, \ldots, n\},
|a_{ij}| \leq 1 \right \}.
}
Hadamard showed that $h_n \leq n^{n/2}$ and proved that this bound is
not optimal unless $n$ is equal to 1,2 or is a multiple of 4. Explicit 
upper bounds for  such $ n $ have been obtained
by Barba, Ehlich and Wojtas (see \cite{brenner, wiki_hadamard}). 

\begin{proposition}\Name{prop: improving using Hadamard}
If $x \in \Xn$ has a bounded $A$-orbit then $\kappa(x) \geq
\frac{1}{h_n}$. 
\end{proposition}

\begin{proof}[Sketch of proof]
%\combarak{I call this a sketch since I am claiming that McMullen can
% work with the $L^p$ norm but don't include a detailed
% explanation. Check that you agree. Also should we justify only
% sketching the proof? Since the improvement is really small this is
% not a big deal so I think it is ok like this. }
Let $\vre>0$. There is $p <\infty$ such that the $L^p$ norm and the
$L^\infty$ norm on $\R^n$ are $1+\vre$-biLipschitz; i.e. for any $v
\in \R^n$, 
\eq{eq: bilipschitz}{
\frac{\|v\|_p}{1+\vre} \leq \|v\|_{\infty} \leq (1+\vre) \|v\|_p.
}
In \cite{McMullenMinkowski}, McMullen showed that the closure of any
bounded $A$-orbit contains a well-rounded lattice, i.e. a lattice
whose shortest nonzero vectors span $\R^n$. In McMullen's paper, the
length of the shortest vectors was measured using the Euclidean
norm, but {\em McMullen's arguments apply equally well to the shortest
  vectors with respect to the $L^p$ norm}. Thus there is $a \in A$ and
vectors $v_1, \ldots, v_n \in ax$ spanning $\R^n$,  such that  for $i=1, \ldots, n$, 
$$ \|v_i\| \in [r, (1+\vre)r]. 
$$
Here $r$ is the length, with respect to
the $L^p$-norm, of the shortest nonzero vector of $ax$. Using the two
sides of \equ{eq: bilipschitz} we find that $ax$
contains an admissible symmetric box of sidelength $r/(1+\vre)$, and
the $L^\infty$ norm of the $v_i$ is at most $(1+\vre)^2 r$. Let $A$ be
the matrix whose columns are the $v_i$. Since the $v_i$ span $\R^n$,
$\det A \neq 0$, and since $x$ is unimodular, $|\det A| \geq
1$. Recalling \equ{eq: defn hn} we find that
$$1 \leq |\det A| \leq \left((1+\vre)^2)r\right)^{n} h_n,
$$
and by definition of $\kappa$ we find
$$
\kappa(x) = \kappa(ax) \geq \left(\frac{r}{1+\vre} \right)^n.
$$
Putting these together and letting $\vre \to 0$ we see that 
$\kappa(x) \geq \frac{1}{h_n}$, as claimed. 
\end{proof}
\begin{corollary}\Name{cor: 1 mod 4}
If $n \geq 5$ is congruent to 1 mod 4, then 
\eq{eq: better bound}{
\kappa_n \geq \frac{1}{\sqrt{2n-1}(n-1)^{(n-1)/2}}.
}
\end{corollary}
\begin{proof}
%Since 
%By an explicit computation \combarak{The way to do this is to prove a
%  statement that the min is attained with $k=2$ and $n_1=1, n_2 =
%  n-1$. Please check that I am right. I do not think it should be
%  included but it should at least be correct.}
%one checks that 
The right hand side of
\equ{eq: better bound} is clearly smaller than the right hand side of
% $\kappa^*_n$ as in 
\equ{eq: kappa star}. Now the
claim follows from Corollary \ref{cor: kappa unbounded orbits} and
Proposition \ref{prop: improving using Hadamard}, using Barba's bound
\eq{eq: Barba}{
h_n \leq \sqrt{2n-1}(n-1)^{(n-1)/2}.
} 
\end{proof}
The same argument applies in the other cases in which $n$ is
sufficiently large and is not divisible by 4, since in these cases
there are explicit upper bounds for the numbers $h_n$ which could be
used in place \equ{eq: Barba}. 

\section{Two strategies for Minkowski's conjecture}\Name{sec: MC} 
We begin by recalling the well-known Davenport-Remak strategy
for proving Minkowski's conjecture. The function 
$N(u) = \prod_1^n u_i$ is clearly $A$-invariant, and 
it follows that the quantity 
$$\widetilde{ N}(x) \df  \sup_{u \in \R^n} \inf_{v
  \in x} |N(u-v)|$$ 
appearing in \equ{eq: Minkowski conj} is
$A$-invariant. Moreover, it is easy to show that if $x_n \to x$ in
$\Xn$ then $\widetilde{ N}(x) \geq \limsup_n \widetilde{ N}(x_n)$. Therefore, in order
to show the estimate \equ{eq: Minkowski conj} for $x' \in \Xn$, it is
enough to show it for some $x \in \overline{Ax'}$. Suppose that $x$
satisfies \equ{eq: covrad} with $d=n$; that is for every $u \in \R^n$ there is $v
\in x$ such that $\|u-v\| \leq \frac{\sqrt{n}}{2}$. 
Then applying  
the inequality of arithmetic and geometric means one finds
$$\prod_1^n \left(|u_i-v_i|^2 \right)^{\frac1n} \leq \frac{1}{n} \sum_1^n |u_i-v_i|^2 \leq \frac{1}{4}
$$
which implies $|N(u-v)| \leq \frac{1}{2^n}$. 
The upshot is that in order to prove Minkowski's conjecture, it is
enough to prove that for every $x' \in \Xn$ there is $x \in
\overline{Ax}$ satisfying \equ{eq: covrad}. So in light of Theorem
\ref{thm: main} we obtain:
\begin{corollary}\Name{cor: for Minkowski 1}
If all stable lattices in $\Xn$ satisfy \equ{eq: covrad}, then
Minkowski's conjecture is true in dimension $n$. 
\end{corollary}

In the next two subsections, we outline two strategies for
establishing that all stable lattices satisfy
\equ{eq: covrad}. 
Both strategies yield affirmative answers in dimensions $n
\leq 7$, thus providing new proofs of Minkowski's conjecture in these
dimensions. 

\subsection{Using Korkine-Zolotarev reduction}
Korkine-Zolotarev reduction is a classical method for
choosing a basis $v_1, \ldots, v_n$ of a lattice $x \in \Xn$. Namely
one takes for $v_1$ a shortest nonzero vector of $x$ and
denotes its length by $A_1$. Then, proceeding inductively, for $v_i$ one takes 
a vector whose projection onto $(\spa \, (v_1, \ldots, v_{i-1}))^\perp$ is
shortest (among those with nonzero projection), and denotes the length
of this
projection by $A_i$. In case there is more
than one shortest vector the process 
is not uniquely defined. Nevertheless we call $A_1, \ldots, A_n$ the
{\em diagonal KZ coefficients of $x$} (with the understanding that
these may be multiply defined for some measure zero subset of $\Xn$). Since $x$
is unimodular we always have 
\eq{eq: det one}{\prod A_i 
=1.}
Korkine and Zolotarev proved the bounds 
\eq{eq: KZ bounds}{
A_{i+1}^2 \geq \frac34 A_i^2, \ \ A_{i+2}^2 \geq \frac23 A_i^2.
}

A method introduced by Woods \cite{Woods_n=4} and developed further in
\cite{hans-gill1} leads to an upper bound on
$\covrad(x)$ in terms of the diagonal KZ coefficients. 
The method relies on the following estimate. Below
$\gamma_n \df \sup_{x \in \Xn} \alpha_1(x)$, where $\alpha_1$
is defined via \equ{eq: k quantities}, that is, $\gamma^2_n$ is the
so-called Hermite constant. 

\begin{lemma}[Woods]\Name{lemma of Woods}
Suppose that $x$ is a lattice in $\R^n$ of covolume $d$, and suppose
that $2 A_1^n \geq
d \gamma_{n+1}^{n+1}$. Then 
$$
\covrad^2(x) \leq A_1^2 -\frac{A_1^{2n+2}}{d^2 \gamma_{n+1}^{2n+2}}.
$$
\end{lemma} 
Woods also used the following observation:
\begin{lemma}\Name{first Woods lemma}
Let $x$ be a lattice in $\R^n$, let $\Lambda$ be a subgroup, and let
$\Lambda'$ denote the projection of $x$ onto $(\spa \, 
\Lambda)^{\perp}$. Then 
$$
\covrad^2(x) \leq \covrad^2(\Lambda) +\covrad^2(\Lambda')
$$
\end{lemma} 
As a consequence of Lemmas \ref{lemma of Woods} and \ref{first Woods
  lemma},  we obtain:
\begin{proposition}\Name{prop: combining Woods}
Suppose $A_1, \ldots, A_n$ are diagonal KZ coefficients of $x \in
\Xn$ and suppose $n_1, \dots, n_k$ are positive integers with $n = n_1 + \cdots
+ n_k$. 
%Suppose $\eta_{n_i} \geq \gamma_{n_i}$ be bounds on the
%square roots of the Hermite constants in the corresponding
%dimensions. 
Set
\eq{eq: defn mi di}{m_i \df n_1 + \cdots
+ n_i \ \text{and } d_i \df \prod_{j=m_{i-1}+1}^{m_{i}} A_j.}
If 
\eq{eq: assumption of woods}{
2A_{m_{i-1}+1} \geq d_i \gamma_{n_i+1}^{n_i+1} 
}
for each $i$, then 
\eq{eq: consequence}{
\covrad^2(x ) \leq \sum_{i=1}^k \left(A^2_{m_{i-1}+1} -
  \frac{A^{2n_i+2}_{m_{i-1}+1}}{d_i^2 \gamma_{n_i+1}^{n_i+1}} \right) 
}

\end{proposition}
\begin{proof}
Let $v_1, \ldots, v_n$ be the basis of $x$
obtained by the Korkine Zolotarev reduction process. Let 
$\Lambda_1$ be the subgroup of $x$ generated by $v_1, \ldots,
v_{n_1}$, and for $i=2, \ldots, k$ let
$\Lambda_i$ be the projection onto
$(\bigoplus_1^{i-1} \Lambda_j)^{\perp}$ of the subgroup of $x$
generated by $v_{m_{i-1}+1}, \ldots, v_{m_{i}}$. This is a lattice of
dimension $m_i$, and arguing as in the proof of \equ{eq: claim 1} we
see that it has covolume $d_i$. The assumption
\equ{eq: assumption of woods} says that we may apply Lemma \ref{lemma
  of Woods}
to each $\Lambda_i$.  We obtain 
$$\covrad^2(\Lambda_i) \leq
A^2_{m_{i-1}+1} - \frac{A^{2n_i+2}_{m_{i-1}+1}}{d_i^2
  \gamma_{n_i+1}^{n_i+1}}  %\leq 
%A^2_{m_{i-1}+1} - \frac{A^{2n_i+2}_{m_{i-1}+1}}{d_i^2
 % \eta_{n_i+1}^{n_i+1}}  
$$ for each $i$,  and we combine these estimates using Lemma
\ref{first Woods lemma} and an obvious induction. 
\end{proof}

\begin{remark}
Note that it is an open question to determine the numbers $\gamma_n$;
however, if we have a bound $\tilde{\gamma}_n \geq \gamma_n$ we may
substitute it into Proposition \ref{prop: combining Woods} in place of $\gamma_n$, as this
only makes the requirement \equ{eq: assumption of woods} stricter and
the conclusion \equ{eq: consequence}
weaker.  
\end{remark}

Our goal is to apply this method to the problem of bounding the covering
radius of stable lattices. We note:
\begin{proposition}\Name{prop: KZ of stable}
If $x$ is stable then we have the inequalities 
\eq{eq: defn KZ stable}{
A_1 \geq 1, \ \ A_1 A_2 \geq 1, \ \ \ldots \ \ A_1 \cdots A_{n-1} \geq 1.
}
\end{proposition}
\begin{proof}
In the above terms, the number $A_1 \cdots A_i$ is equal to
$|\Lambda|$ where $\Lambda$ is the subgroup of $x$ generated by $v_1,
\ldots, v_i$. 
\end{proof}
This motivates the following:
\begin{definition}
We say that an $n$-tuple of positive real numbers $A_1, \ldots, A_n$
is {\em KZ stable} if the inequalities \equ{eq: det one}, \equ{eq: KZ
  bounds}, \equ{eq: defn KZ stable} are 
satisfied. We denote the set of KZ stable $n$-tuples by
$\mathrm{KZS}$. 
%We say that $x$ is {\em KZ stable} if some choice $A_1,
%\ldots, A_n$  of 
% KZ diagonal coefficients of $x$ is KZ stable.  
\end{definition}
Note that $\mathrm{KZS}$ is a compact subset of
$\R^n$. Recall that a {\em composition of $n$} is an ordered $k$-tuple $(n_1,
\ldots, n_k)$ of positive integers, such that $n=n_1+\ldots +n_k$. As
an immediate application of Corollary \ref{cor: for 
  Minkowski 1} and
Propositions \ref{prop: combining Woods} and \ref{prop: KZ of stable}
we obtain:
\begin{theorem}\Name{thm: use of KZ diagonal}
For each composition $\cI \df (n_1, \ldots, n_k)$ of $n$, define $m_i, d_i$ by
\equ{eq: defn mi di} and let $\mathcal{W}(\cI)$ denote
the set 
$$\left\{(A_1, \ldots, A_n): \forall
  i, \, 
\equ{eq: assumption of woods} \text{ holds, and } \sum_{i=1}^k \left(A^2_{m_{i-1}+1} -
  \frac{A^{2n_i+2}_{m_{i-1}+1}}{d_i^2 \gamma_{n_i+1}^{n_i+1}} \right) \leq
  \frac{n}{4} \right \}.
$$
If 
\eq{eq: covering suffices}{
\mathrm{KZS} \subset \bigcup_{\cI} \cW(\cI)
}
then Minkowski's conjecture holds in
dimension $n$. 
\end{theorem}
Rajinder Hans-Gill has informed the authors  that using
arguments as in \cite{hans-gill1, hans-gill2}, it is possible to
verify \equ{eq: covering suffices} 
%the hypothesis of 
%Theorem \ref{thm: use of KZ diagonal} 
in dimensions up to 7, thus
reproving Minkowski's conjecture in these dimensions. 

\subsection{Local maxima of covrad}
The aim of this subsection is to prove Corollary \ref{cor: Minkowski}, 
which shows that in
order to establish that all stable lattices in $\R^n$ satisfy the covering
radius bound \equ{eq: covrad}, it suffices to check this on a finite
list of lattices in each dimension $d \leq n$.

%Note that in light of Lemma \ref{first Woods lemma}, in order to
%establish the bound \equ{eq: covrad} for $x \in \Xn$, it suffices to find
%$\Lambda \subset x$ of rank $d < n$, such that both $\Lambda$ and
%$\Lambda'$ satisfy \equ{eq: covrad} with $d, n-d$ in place of $n$. 

The function $\covrad : \Xn \to \R$ may have local maxima, in the
usual sense; that is, lattices $x \in \Xn$ for which there is a
neighborhood $\mathcal{U}$ of $x$ in $\Xn$ 
such that for all $x' \in \cU$ we have $\covrad(x') \leq
\covrad(x)$. Dutour-Sikiri\'c, Sch\"urmann and Vallentin
\cite{mathieu} gave a geometric characterization of lattices
which are local maxima of the function 
$\covrad$, and showed that there are finitely many in each dimension.
Corollary \ref{cor: Minkowski} asserts that Minkowski's conjecture
would follow if all local maxima of covrad satisfy the bound \equ{eq:
  covrad}.   
%\begin{proposition}\Name{prop: local maxima of covrad}
%Suppose that for all $d \leq n$, any local maximum of the function
%$\covrad$ satisfies \equ{eq: covrad}. Then Minkowski's conjecture
%holds in dimension $n$, that is any lattice $x \in \Xn$ satisfies
%\equ{eq: Minkowski conj}. 
%\end{proposition}
\begin{proof}[Proof of Corollary \ref{cor: Minkowski}]
We prove by induction on $n$ that any stable lattice satisfies the
bound \equ{eq: covrad} and apply Corollary \ref{cor: for Minkowski 1}. 
Let $\cS$ denote the set of stable lattices in $\Xn$. It is compact
so the function $\covrad$ attains a maximum on $\cS$, and it suffices
to show that this maximum is at most $\frac{\sqrt{n}}{2}$. Let $x \in \cS$
be a point at which the maximum is attained. If $x$ is an interior
point of $\cS$ then necessarily $x$ is a
local maximum for $\covrad$ and the required bound holds by
hypothesis. Otherwise, there is a sequence $x_j \to x$ such that
$x_j \in \Xn \sm \cS$; thus each $x_j$ contains a discrete subgroup
$\Lambda_j$ with $|\Lambda_j| <1$ and $r(\Lambda_j) <n$. Passing to a subsequence we may
assume that that $r(\Lambda_j)=k<n$ is the same for all $j$, and
$\Lambda_j$ converges to a discrete subgroup $\Lambda$ of $x$. Since
$x$ is stable we must have $|\Lambda|=1$. Let $\pi: \R^n \to (\spa \,
\Lambda)^{\perp}$ by the orthogonal projection and let 
$\Lambda' \df \pi(x)$. 

It suffices to show that both $\Lambda$
and $\Lambda'$ are stable. Indeed, if this holds then by the induction
hypothesis, both $\Lambda$
and $\Lambda'$ satisfy \equ{eq: 
  covrad} in their respective dimensions $k, n-k$, and by Lemma \ref{first
  Woods lemma}, so does $x$. To see that $\Lambda$ is stable, note
that any subgroup $\Lambda_0 \subset \Lambda$ is also a subgroup of
$x$, and since $x$ is stable, it satisfies $|\Lambda_0| \geq 1$.   To
see that $\Lambda'$ is stable, note that if $\Lambda_0 \subset
\Lambda'$ then $\widetilde{\Lambda_0} \df x \cap \pi^{-1}(\Lambda_0)$ is
a discrete subgroup of $x$ so satisfies $|\widetilde{\Lambda_0}| \geq
1$. Since $|\Lambda|=1$ and $\pi$ is orthogonal, we argue as in the
proof of \equ{eq: claim 1} to obtain
$$1 \leq |\widetilde{\Lambda_0}| = |\Lambda | \cdot |\Lambda_0| =
|\Lambda_0|,$$
so $\Lambda'$ is also stable, as required. 
\end{proof}

In \cite{mathieu}, it was shown that there is a unique local maximum
for covrad in dimension 1, none in dimensions 2--5, and a unique one in
dimension 6. Local maxima of covrad in dimension 7 are classified in
the manuscript \cite{mathieu2}; there are 2 such lattices. Thus
in total, in dimensions $n \leq 7$ there are 4 local maxima of the
function covrad. We
were informed by Mathieu Dutour-Sikiri\'c that these lattices all satisfy the covering radius bound
\equ{eq: covrad}. Thus Corollary  \ref{cor: Minkowski} yields another proof of
Minkowski's conjecture, in dimensions $n \leq 7$. In \cite[\S
7]{mathieu}, an infinite list of lattices (denote there by  $[L_n,
Q_n]$) is defined. The list consists of  one lattice in each
dimension $n \geq 6$, each of which is a local maximum for the
function covrad, and satisfies the bound \equ{eq:
  covrad}. It is expected that for each $n$, this lattice has the largest covering
radius among all
local maxima in dimension $n$. In light of Corollary \ref{cor: Minkowski}, the validity
of the latter assertion would imply  Minkowski's conjecture in all
dimensions.

\section{A volume computation}
\Name{sec: volume computation}
The goal of this section is the following. 
\begin{theorem}\Name{vol est theorem}
Let $m$ denote the $G$-invariant probability measure on
$\Xn$ derived from Haar measure on $G$, and let $\cS^{(n)} $
denote the subset of stable lattices in $\Xn$. Then $m\left(\cS^{(n)} \right)\lra 1$ as $n \to \infty$. 
\end{theorem}
%We shall actually prove a stronger statement. 
%For the remainder of this section we fix $n$ and write $\cS \df \cS(n)$. 
Recalling the notation \equ{eq: k quantities}, for $k=1, \ldots, n-1$,
let 
$$
\cS^{(n)}_k(t) \defi\set{x\in \Xn: \al_k(x)\ge t}, \ \ \ \cS^{(n)}_k
\df \cS^{(n)}_k(1).
$$
It is clear that 
%$\al(x)=\min_{1\le k\le n} \al_k$. Therefore 
%the
%set of stable lattices satisfies  
$\cS^{(n)}=\bigcap_{k=1}^{n-1}\cS^{(n)}_k$. %where in the last
%equality we used the fact that 
%since $\cS_n=\Xn$. 
In order to prove Theorem~\ref{vol est theorem} it is enough to prove
that 
\eq{1312}{
%\begin{equation*}%\label{1312}
\max_{k=1, \ldots, n-1} m\left(\Xn \smallsetminus \cS^{(n)}_k \right)
= o\left(\frac1n \right),  
%\end{equation*}
}
as this implies 
\begin{align*}
m\left(\cS^{(n)}\right )&= 1-m\left(\Xn\smallsetminus \cap_{k=1}^{n-1}
\cS^{(n)}_k \right)=1-m\left( \cup_{k=1}^{n-1} \left(\Xn\smallsetminus
  \cS^{(n)}_k \right) \right)\\ 
&\ge 1-\sum_{k=1}^{n-1}m \left(\Xn \smallsetminus
\cS^{(n)}_k\right)=1-(n-1)o\left(\frac1n\right)\overset{n\to\infty}{\lra}1. 
\end{align*}

%Throughout this section we will use the notation 
%$$
%k_1 \df \min(k, n-k).
%$$
We will actually prove a bound which is stronger than \equ{1312}, namely:
\begin{proposition}\Name{prop: strengthening volume} There is $C_1>0$
  such that if we set 
\eq{eq: choice of t}{
t_k=t(n,k) \df \left(\frac{n}{C_1} \right)^{\frac{k(n-k)}{2n} },
} 
then 
%there are
%  positive $C, n_0$ such that for all $n \geq n_0$ and all $k \in \{1,
%  \ldots, n-1\}$, 
$$ \max_{k=1, \ldots, n-1} m\left(\Xn \smallsetminus
  \cS^{(n)}_k\left(t_k\right) \right) =o 
\left( \frac1n \right).
% \left(\frac{C}{n}\right)^{\frac{nk_1}{4}}. 
$$
In particular, $m\left(\bigcap_{k=1}^{n-1}
  \cS^{(n)}_k\left(t_k\right ) \right) \to_{n \to \infty} 1.$
\end{proposition}
Let
$$\gamma_{n,k}  \df \sup_{x \in \Xn} \alpha_k(x). 
$$
Recall that {\em Rankin's constants} or the {\em generalized Hermite's
  constants}, are defined as $\gamma_{n,k}^2$ (note that our notations
differ from traditional notations by a square root). 
Thunder \cite{Thunder} computed upper and lower bounds on
$\gamma_{n,k}$ and in particular established the growth
rate of $\gamma_{n,k}$. The numbers
$t(n,k)$ have the same growth rate. Thus
Proposition \ref{prop: strengthening volume} should
be interpreted as saying that the lattices in $\Xn$ for which the
value of each $\alpha_k$ is
close to the maximum possible value, occupy almost all of the measure of $\Xn$. 

The proof of Proposition \ref{prop: strengthening volume} relies on
Thunder's work, which in turn was based on a variant of Siegel's
formula~\cite{SiegelFormula} which relates the Lebesgue measure
on $\bR^n$ and the measure $m$ on $\Xn$. We now review Siegel's
method and Thunder's results.
 
In the sequel we consider $n \geq 2$ and $k \in \{1,
\ldots, n-1\}$ as fixed and omit, unless there is risk of confusion,
the symbols $n$ and $k$ from the notation.  
Consider the (set valued) map $\Phi=\Phi^{(n)}_k$ that assigns to
each lattice $x\in \Xn$ the following subset 
of $\wedge^k\bR^n$:
$$\Phi (x)\defi\set{\pm w_\Lam:\Lam \subset x\textrm{ a
    primitive subgroup with } r(\Lam)=k},$$ 
where $w_\Lam\defi v_1\wedge\dots\wedge v_k$ and $\set{v_i}_1^k$ 
forms a basis for $\Lam$ (note that $w_\Lam$ is well defined up to
sign, and $\Phi(x)$ contains both possible choices). 
Let $$\mathscr{V} = \mathscr{V}^{(n)}_k \defi
\set{v_1\wedge\dots\wedge v_k: v_i\in\bR^n} \sm \{0\}$$ 
be the variety of pure tensors in $\wedge^k\bR^n$. 
%\begin{definition}
%\begin{enumerate}
%\item 
For any a compactly supported bounded Riemann integrable function $f$
on $\mathscr{V}$ set 
%we define $f_{\Phi_k}\in
%  L^1(X,m)$ by 
\eq{eq: finite sum}{\hat{f}: \Xn \to \R, \ \ \ \ 
  \hat{f}(x)\defi\sum_{w\in\Phi (x)}f(w).}
%\item 
Then it is known  (see \cite{Weil}) that the (finite) sum \equ{eq: 
  finite sum} %$f_{\Phi_k}(x)$ 
defines a function in $L^1(\Xn, m)$. 
Let  $\theta = \theta^{(n)}_k$ denote the Radon measure on $\mathscr{V}$
  defined by 
\begin{equation}\label{1420}
\int_{\mathscr{V}} f d\theta \defi\int_{\Xn} \hat{f} \, dm, \text{  for 
  \ } f\in C_c(\mathscr{V}).
\end{equation}
%\end{enumerate}
%\end{definition}

%Thus %We conclude that 
%~\eqref{1312} will follow from % if we establish 
%the estimate
%\begin{equation}\label{2040}
%\theta(\set{w\in\mathscr{V}:\norm{w}\le 1}=o(1/n).
%\end{equation}
%\combarak{it will be nice to tighten the strings. Is it true that for
%  most $x$,  and $k \leq n/2$, $\alpha_k(x)  \approx (n^{k/2} \lambda^k)$
%  for some $\lambda <1$?}
In this section we write $G=G_n \df \SL_n(\R)$. 
There is a natural transitive action of $G_n$ on % the variety of pure
%$tensors 
$\mathscr{V} %\subset \wedge^k\bR^n
$ and the stabilizer of  
$e_1\wedge\dots\wedge e_k$ is the subgroup 
$$H= H^{(n)}_k \df \left\{ \smallmat{A&B\\0&D} \in G: %\subset G $ consisting of matrices of the form
%$$\smallmat{A&B\\0&D}$ where $A,D$ are square matrices of dimensions
%$k,n-k$ respectively and have determinants 1. 
A \in G_{k} , D \in G_{n-k} \right \}. $$
We therefore obtain an identification $\mathscr{V}\simeq G/H$ and view
$\theta$ as a measure on $G/H$.  

It is well-known %follows from the theory of integration on homogeneous spaces of Lie groups
%It follows from general considerations 
(see e.g.~\cite{Raghunathans_book}) that up to a proportionality
constant there exists a unique $G$-invariant measure 
$m_{G/H}$ on $G/H$; moreover, given Haar
measures $m_{G}, m_{H}$ on $G$ and $H$ respectively, there is a
unique normalization of $m_{G/H}$ such that 
%can be chosen uniquely 
%so that the following Fubini-like formula holds 
for any $f\in L^1(G,m_G)$
\eq{1440}{
\int_G f \, dm_G =\int_{G/H}\int_{H} f(gh) dm_{H}(h)dm_{G/H}(gH).
}
%In our case, w
We choose the Haar measure $m_G$ so that it
descends to our probability measure $m$ on $\Xn$;  similarly, we  
choose the Haar measure $m_{H}$ so that the periodic orbit
$H\bZ^n \subset \Xn$ has volume 1. These choices of Haar measures
%Henceforth we
%denote by $m_{G/H}$  
%the corresponding measure on $G/H$ which satisfies~\eqref{1440} 
%with
%the above pair of haar measures. 
determine our measure $m_{G/H}$ unequivocally. 
It is clear from the defining formula~\eqref{1420} that $\theta$ is
$G$-invariant and therefore %, by the uniqueness described above  
%we deduce 
the two measures $m_{G/H}, \theta$ are proportional. In fact (see
\cite{SiegelFormula} for the case $k=1$ and  \cite{Weil} for the general case), %:
%Actually more is true: 
%\begin{theorem}\label{npc}
\eq{eq: Siegel normalization}{m_{G/H} = \theta.
}
% are the same. \combarak{Did you
%  search for this in the literature? I would be surprised if it is not
%done already somewhere.} 
%\end{theorem}
\ignore{
\begin{proof}
We need to
calculate the proportionality constant relating the measures. 
Choose a fundamental domain $F\subset G$ for $\Ga\defi \SL_n(\bZ)$ and
another fundamental domain $\hat{F}\subset H$ for $\hat{\Ga}\defi H\cap
\Ga$ and note that  by our choices 
$$m_G(F)=m_{H}(\hat{F})=1.$$ 
Let $\pi: G \to G/H$ be the natural projection. By the implicit
function theorem there is a bounded 
$U\subset G$ for which $\pi|_U$ is a homeomorphism onto its image and
the image is an open neighborhood of the identity coset.  
%(we require furthermore that $U$ will have compact closure). 
%In what follows we often want to think of functions on $G/H$ as functions on $H$ which are $H$-invariant on the right.
%We distinguish between the two viewpoints in our notation by denoting for $f:G/H\to\bR$ by $\tilde{f}$ its pull-back to $G$.
%
%Let $\tilde{\chi}_{UH} be the characteristic function of the projection of $U$ to $G/H$. 
%We state a few observations for convenience of reference while 
%justifying the equalities to come.
%\begin{enumerate} 
%\item\label{255 0} 
Since $H=\bigsqcup_{\hat{\ga}\in\hat{\Ga}}\hat{F}\hat{\ga} $ and the
  product map $U\times H\to G$ is injective,  
we find that %conclude that $UH=\bigsqcup_{\ga\in\hat{\Ga}}U\hat{F}\ga_k$. It follows that
\eq{2130}{
\chi_{UH}(g)=\sum_{\hat{\ga}\in\hat{\Ga}}\chi_{U\hat{F}}(g\hat{\ga}).
}
%\item\label{255 1} 
We now show that %Since $m_{H}(\hat{F})=1$, for any $g\in G$, 
\eq{1655}{
\chi_{UH}(g)=\int_{H}\chi_{U\hat{F}}(gh)dm_{H}(h).
} 
Indeed, if $g\notin UH$ then both sides
of~\eqref{1655} vanish. Otherwise, % hand, if $g\in UH$ then we may 
write $g=u h$, and let 
$h_0\in H$  %which contributes to the integral on the right hand side
%of~\eqref{1655}; i.e.\ 
such that $gh_0\in U\hat{F}$, so that the integrand is nonzero. Then there
are $u'\in U, \hat{f}\in \hat{F}$ such that $u hh_0=u'\hat{f}$. By the 
injectivity of $U\times H\to G$ we conclude that $u=u'$ and  
%in turn 
$h_0=h^{-1}\hat{f}$. That is, $\set{h_0\in H : gh_0\in U\hat{F}}=
h^{-1} \hat{F}$ and so for a given $g\in G$,  
the right hand side of~\eqref{1655}
equals $m_{H}(h^{-1}\hat{F})=1$ as desired.
%\item\label{255 2.5} 

As before, let $\E_1, \ldots, \E_n$ denote the standard basis of
$\R^n$. 
Given a lattice $x=g\Z^n$ corresponding to the coset $g\Gamma \in
\Xn$,  we have 
$$\Phi_k(x)=\set{g\ga (\E_1\wedge\dots\wedge \E_k):\ga \in\Ga'}$$ 
where $\Ga' \subset \Ga$ is some set of coset representatives of $\hat{\Ga}$ in $\Ga$. Note that when
$\ga_1, \ga_2$ are distinct elements of $\Ga'$, the two tensors  
$g\ga_i (e_1\wedge\dots\wedge e_k)$, $i=1,2 %g\ga'
                                %(e_1\wedge\dots\wedge e_k)
$ are different. 
%\item\label{255 2} 
Under 
  identification $\mathscr{V}\simeq G/H$, we can think of a function $\vphi$ on $\mathscr{V}$  
as a function on $G$ which is right-$H$-invariant and %then by~\eqref{255 2.5} we  
%have that if 
for $x=g\Ga\in \Xn$ we have %then 
$$\hat{\vphi} (x) =\sum_{w\in \Phi(x)}\vphi(w)= \sum_{\ga \in\Ga'}\vphi(g\ga).$$
%\end{enumerate} 
%Let  $\bar{\chi}_{UH}:G/H\to\bR$ be the characteristic
%function of $UH$, viewed as a function on $G/H$. 
Then
\begin{align}\label{1249}
\nonumber \int_G\chi_{U\hat{F}}\, dm_G&
\stackrel{\equ{1440}}{=}\int_{G/H}\int_{H}\chi_{U\hat{F}}(gh)dm_{H}(h)dm_{G/H}(gH)\\ 
&\overset{\equ{1655}}{=}\int_{G/H}\chi_{UH}(gH)dm_{G/H}(gH). 
\end{align} 
On the other hand
\begin{align}\label{1542}
\int_{G/H}\chi_{UH}\, d\theta &\stackrel{\equ{1420}}{=}\int_{\Xn}
\widehat{(\chi_{UH})} \, dm \\  
\nonumber&\overset{\equ{1249}}{=}\int_F\sum_{\ga' \in\Ga'}\chi_{UH}(g\ga')dm_G(g)\\ 
\nonumber&\overset{\equ{2130}}{=}\int_F\sum_{\ga'\in\Ga'}
\sum_{\hat{\ga}\in\hat{\Ga}}\chi_{U\hat{F}}(g\ga'\hat{\ga})dm_G(g)\\   
\nonumber&=\int_F\sum_{\ga\in\Ga}\chi_{U\hat{F}}(g\ga)dm_G(g)=\int_G\chi_{U\hat{F}}
\, dm_G.
\end{align}
By~\eqref{1249} and \eqref{1542} we have $\int_{G/H} \chi_{UH} \,
dm_{G/H} = \int_{G/H} \chi_{UH} 
d\theta$, and this integral is finite and  positive since 
$UH$ is open with compact closure. Thus 
the proportionality  
constant relating the two measures %$\theta_k, m_{G/H}$ 
must be 1. 
\end{proof}
}
%Now, given $f:\bN\to\bR$
%and two integers $1\le k\le n$ we set %
%

For $t>0$, 
let $\chi=\chi_t:\mathscr{V}\to\bR$ be the restriction to $\mathscr{V}$ of the
characteristic function of the ball of radius $t$ around the origin, in $\wedge^k\bR^n$. %Since
%$\chi$ vanishes on $\Phi(x)$ when $x \in \cS^{(n)}_k$, 
%and n
Note that  
$\hat{\chi}(x)=0$ if and only if $x\in \cS^{(n)}_k(t)$ and furthermore,
$\hat{\chi}(x)\ge 1$ if $x\in \Xn\smallsetminus \cS^{(n)}_k(t)$.  
It follows that
\eq{eq: using chi}{m\left(\Xn\smallsetminus
\cS_k^{(n)}(t)\right)\le\int_{\Xn}\widehat{(\chi_t)} dm =\int_{\mathscr{V}}\chi_t
d\theta.
}

Let $V_j$ denote the volume of the Euclidean unit ball in
$\bR^j$ and let $\zeta$ denote the Riemann zeta function.  We will use
an unconventional convention $\zeta(1)=1$, which will make our
formulae simpler. 
For $j \geq 1$, define 
$$
R(j) \df %\left \{ \begin{split} V_1 & \ \ \  \text{         if } j=1 \\
 \frac{j^2 V_j}{ \zeta(j)}  %& \ \ \  \text{        otherwise}
%\end{split}  \right.
$$
and 
$$B( n,k)\defi \frac{\prod_{j=1}^nR(j)}{\prod_{j=1}^k R(j)\prod_{j=1}^{n-k}R(j)}.$$
The following calculation was carried out in~\cite{Thunder}.
\begin{theorem}[Thunder]{\label{Thunder}}
For $t>0$, we have 
$$\int_{\mathscr{V} } \chi_t \, dm_{G/H}
%(\set{w\in\mathscr{V}:\norm{w}\le
%  t})
=B( n,k)\frac{ t^n}{n}.$$ 
\end{theorem} 

We will need to bound $B( n,k)$. 
\begin{lemma}\Name{lem: bound on Bin}
There is $C> 0$ so that for all large enough $n$ and
all $k=1, \ldots, n-1$, 
\eq{eq: bound on Bin}{B( n,k)\leq
  \left(\frac{C}{n}\right)^{\frac{k(n-k)}{2}}.
%, \ \ \text{ where } k_1
%  \df \min (k, n-k ). 
}
\end{lemma}
\begin{proof}In this proof $c_0, c_1, \ldots$ are constants independent of $n, k, j$. 
Because of the symmetry $B(n,k)=B(n,n-k)$ it is
enough to prove \equ{eq: bound on Bin} with $k\leq \frac{n}{2}.$
%\combarak{Is this actually used?} 
Using %the unconventional convention $\zeta(1)=1$ and 
the formula
$V_j=\frac{\pi^{j/2}}{\Ga\left(\frac{j}{2}+1\right)}$ we obtain 
%the following for any $k\in\set{1,\dots,\lfloor{\frac{n}{2}\rfloor}}$:
%
\begin{align*}
B(n,k)&=\prod_{j=1}^k\frac{R(n-k+j)}{R(j)}
=\prod_{j=1}^k\frac{\zeta(j)(n-k+j)^2\frac{\pi^{(n-k+j)/2}}{\Ga(\frac{n-k+j}{2}+1)}}
{\zeta(n-k+j)j^2\frac{\pi^{j/2}}{\Ga(\frac{j}{2}+1)}}\\
&=\prod_{j=1}^k \frac{\zeta(j)}{\zeta(n-k+j)}\cdot\pa{\frac{n-k+j}{j}}^2\cdot\pi^{\frac{n-k}{2}}\cdot
\frac{\Ga(\frac{j}{2}+1)}{\Ga(\frac{n-k+j}{2}+1)}.
\end{align*}
Note that $\zeta(s) \geq 1$ is a decreasing function of $s>1$, so
(recalling our convention $\zeta(1)=1$) 
$\frac{\zeta(j)}{\zeta(n-k+j)} \leq c_0 \df \zeta(2)$.
It follows that for all large enough $n$ and 
for any $1\le j\le k, $ 
\eq{eq: estimate first part}{
\frac{\zeta(j)}{\zeta(n-k+j)}\cdot\pa{\frac{n-k+j}{j}}^2\cdot\pi^{\frac{n-k}{2}}\le c_0
n^2\pi^{\frac{n-k}{2}}\le 4^{\frac{n-k}{2}}.
}
%Thus it suffices to show that for all 
%$\vre>0$ and for any $n$ large enough, for any $1\le j\le k\le n/2$ we
%have
%that $$4^{(n-k)/2}\frac{\Ga(\frac{j}{2}+1)}{\Ga(\frac{n-k+j}{2}+1)}\le
%\vre.$$ 
%Recall 
%From the monotonicity of $\Ga(x)$ and the identity $\Ga(n+1)=n!$, we
%obtain the estimate
%\begin{align*}
%\frac{\Ga(\frac{j}{2}+1)}{\Ga(\frac{n-k+j}{2}+1)}&\le
%\frac{\Ga(\lceil\frac{j}{2}\rceil+1)}{\Ga(\lfloor\frac{n-k+j}{2}\rfloor+1)}\\ 
%&=\frac{1}{(\lceil\frac{j}{2}\rceil+1)(\lceil\frac{j}{2}\rceil+2)\dots(\lceil\frac{j}{2}\rceil+(\lfloor\frac{n-%k+j}{2}\rfloor-\lceil\frac{j}{2}\rceil))}. 
%\end{align*}
%So we obtain 
According to Stirling's formula, there are positive constants $c_1,
c_2$ such that for all $x \geq 2$, 
$$
c_1 \sqrt{\frac{2\pi}{x}}\left(\frac{x}{e} \right)^x \leq \Gamma(x)
\leq c_2 \sqrt{\frac{2\pi}{x}}\left(\frac{x}{e} \right)^x.
$$
We set $u \df \frac{j}{2}+1$ and $v \df \frac{n-k}{2} $, so that 
$u+v \geq \frac{n-1}{4}$, 
%\geq
%\frac{n-1}{4} $
%. Note that when $j \leq k/2$ we have $\frac{u}{v}
               %\leq  $  
and obtain  
\eq{eq: estimate second part}{
\begin{split}
\frac{\Ga(\frac{j}{2}+1)}{\Ga(\frac{n-k+j}{2}+1)} & =
\frac{\Ga(u)}{\Ga(u+v)} \leq \frac{c_2}{c_1}
\sqrt{\frac{u+v}{u}}\frac{u^u}{(u+v)^{u+v}} \frac{e^{u+v}}{e^u} \\
& \leq c_3 e^v \frac{u^{u-1/2}}{(u+v)^{u+v-1/2}} = c_3
\left(\frac{e}{u+v}\right)^v \frac{1}{\left(1+\frac{v}{u}
  \right)^{u-1/2}}, 
\\
& \leq c_3 \left( \frac{4e}{n-1} \right)^{\frac{n-k}{2}}. % \left(\frac{v}{u}\right)^{1-u}.
\end{split}
}
%so 
%\[
%\begin{split}
%& \log \left(\frac{\Gamma \left(\frac{j}{2} +1 \right)}{\Gamma
%    \left(\frac{n-k+j}{2}+1 \right)} \right) \\ 
%& \leq c_4 + \frac{n-k}{2} \left( 
%  1-\log\left(\frac{n-k+j+2}{2} \right) \right) -\left(\frac{j+1}{2} \right)\log \left(1+\frac{n-k}{j+2}
%\right). 
%\end{split}
%\]
%If $j \geq k/2$ we bound $v/u$ by 
Using \equ{eq: estimate first part} and \equ{eq: estimate
  second part} we obtain
$$
B( n,k) \leq \left[c_3 4^{\frac{n-k}{2}}
  \left(\frac{4e}{n-1}\right)^{\frac{n-k}{2}} \right]^k = \left[ c_3
  \left(\frac{16e}{n-1} \right)^{\frac{n-k}{2}} \right]^k.
$$
So taking $C > 16c_3 e$ %and $n_0 > C$ 
we obtain \equ{eq: bound on
  Bin} for all large enough $n$. 
%The assertion follows as in the product appearing in the above denominator we see at least $(n-k)/2- C$ terms which exceed $10$ where $C$ is an absolute constant.
\end{proof}
\begin{proof}[Proof of Proposition \ref{prop: strengthening volume}]
Let $C$ be as in Lemma \ref{lem: bound on Bin} and let $C_1>C$. 
Then by \equ{eq: using chi}, \equ{eq: Siegel normalization} and 
Theorem~\ref{Thunder}, for all sufficiently large $n$ we have
\[\begin{split}
m\left(\Xn \smallsetminus \cS^{(n)}_k(t_k) \right) & \leq
B(n,k) \frac{t_k^n}{n} \\
& \leq \frac1n \left(\frac{C}{n} \right)^{\frac{k(n-k)}{2}}
\left(\frac{n}{C_1} \right)^{\frac{k(n-k)}{2}} =  \frac1n \left(\frac{C}{C_1} \right)^{\frac{k(n-k)}{2}}.
\end{split}
\]
Multiplying by $n$ and taking the maximum over $k$ we obtain 
$$
n \, \max_{k=1, \ldots, n}  m\left(\Xn \smallsetminus
  \cS^{(n)}_k(t_k) \right) \leq \left(\frac{C}{C_1}
\right)^{\frac{n-1}{2}} \to_{n\to \infty} 0.
$$
\end{proof}

}
\section{Preliminaries}\Name{sec: preparations}

\subsection{The behavior of almost every lattice}
Let $\mathcal{WR} \subset \Xn$ denote the set of 
well-rounded lattices. Suppose $x \in \mathcal{WR}$ and $v_1, \ldots,
v_n$ are  linearly independent shortest vectors of $x$. We will say that $x$ is a {\em
  generic well-rounded lattice} if for any $v \in x \sm \{0, \pm v_1,
\ldots, \pm v_n\}$, $\|v\| > \|v_i\|.$ For instance the lattice $\Z^n$
is generic well-rounded. We have:
\begin{proposition}\Name{prop: WR mfd}
If $x \in \Xn$ is a generic well-rounded lattice, then there is an
open $\cU \subset \Xn$ such that $\mathcal{WR} \cap \cU$ is a
submanifold of $\Xn$ of codimension $n-1$. 
\end{proposition}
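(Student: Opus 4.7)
The plan is to express $\mathcal{WR}\cap \cU$ near $x$ as the zero set of a smooth submersion $F$ into $\R^{n-1}$, which will immediately give the required codimension. Let $v_1, \ldots, v_n$ be the shortest vectors of $x$; by well-roundedness and genericity they form a basis of $\R^n$, and setting $r\defi \|v_1\|=\cdots=\|v_n\|$, every $v \in x \sm \{0, \pm v_1, \ldots, \pm v_n\}$ satisfies $\|v\| > r$. I would first choose a neighborhood $W$ of the identity in $G$ small enough that the orbit map $g\mapsto gx$ is a diffeomorphism of $W$ onto an open set $\cU \subset \Xn$ and, by continuity of the lengths of lattice points, such that for every $g \in W$ all shortest vectors of $gx$ lie in $\{\pm gv_1, \ldots, \pm gv_n\}$ (this is where the strict gap $\|v\|>r$ for the non-shortest vectors of $x$ is used).

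Next I would define $F : W \to \R^{n-1}$ by
$$F(g)_i \defi \|gv_i\|^2 - \|gv_1\|^2, \qquad i = 2, \ldots, n.$$
By the choice of $W$, the lattice $gx$ is well-rounded iff $\|gv_1\|=\cdots=\|gv_n\|$, iff $F(g) = 0$. So it suffices to show that $F$ is a submersion at $e$. A direct computation gives, for $\xi \in \mathfrak{sl}_n(\R) \cong T_e G$,
$$dF_e(\xi)_i = 2\langle \xi v_i,v_i\rangle-2\langle\xi v_1,v_1\rangle = 2\on{tr}\bigl(\xi (v_iv_i^T - v_1v_1^T)\bigr).$$
Since the trace pairing on $\on{Mat}_n(\R)$ has annihilator $\R I$ when restricted to $\mathfrak{sl}_n$, surjectivity of $dF_e$ is equivalent to linear independence of the symmetric matrices $M_i \defi v_iv_i^T - v_1v_1^T$ $(i = 2, \ldots, n)$ modulo $\R I$.

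The main, and only non-routine, step is this linear independence. Given a hypothetical relation $\sum_{i=2}^n c_iM_i = cI$, set $c_1 \defi -\sum_{i \geq 2} c_i$ to rewrite it as $\sum_{i=1}^n c_iv_iv_i^T = cI$. Taking the trace and using the common length $\|v_i\|=r$ yields $r^2\sum c_i = cn$, hence $c = 0$; applying the resulting identity to each $v_j$ gives $\sum_i c_i\langle v_i,v_j\rangle v_i=0$, and since $v_1,\ldots,v_n$ are a basis we deduce $c_i\langle v_i,v_i\rangle =0$ and then $c_i=0$ for every $i$. Both the equal-length property (WR) and the basis property (genericity) are used. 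Finally, the local diffeomorphism $W\to\cU$ transports the codimension $n-1$ submanifold $F^{-1}(0)\subset W$ onto $\mathcal{WR}\cap\cU$, completing the argument.
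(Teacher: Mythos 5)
Your proof is correct and follows the same approach as the paper: localize near $x$ using genericity, express $\mathcal{WR}\cap\cU$ as the common zero set of the $n-1$ functions $g\mapsto\|gv_i\|^2-\|gv_1\|^2$, and conclude via the regular value theorem. The one place where you go beyond the paper is welcome: the paper simply asserts that these $n-1$ equations are ``independent,'' whereas you actually verify surjectivity of the differential by the trace-pairing computation and the linear-independence argument with the matrices $v_iv_i^T-v_1v_1^T$ modulo $\R I$ (a step that genuinely uses both the equal-length property and that the $v_i$ form a basis), which is exactly the detail a careful reader would want.
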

\begin{proof}
The fact that $x$ is generic implies that there is a neighborhood
$\cV$ of the identity in $G$, such that for $g 
\in \cV$, any shortest vector of  $gx$ is $gv_i$ for some
$i$. Making $\cV$ smaller if necessary we obtain that the
multiplication map $g \mapsto gx$ is a homeomorphism of $\cV$ onto
$\cU = \cV x$, and we obtain that  $\mathcal{WR} \cap \cU = \{gx: \|gv_1\| = \cdots =
\|gv_n\|\}.$ Since $\{g \in G: \|gv_1\| = \cdots = \|gv_n\|\}$ is a
subvariety of $G$ cut out by $n-1$ independent equations,
$\mathcal{WR} \cap \cU$ is a submanifold of codimension $n-1$.
\end{proof}

\begin{proposition}\Name{cor: almost every}
For almost every $x \in \Xn$, the orbit $Ax$ contains a well-rounded
lattice. 
\end{proposition}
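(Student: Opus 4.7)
The plan is to prove that
$$\mathcal{S} \df \{x \in \Xn : Ax \cap \mathcal{WR} \neq \varnothing\} = A \cdot \mathcal{WR}$$
has full $m$-measure. Note that $\mathcal{S}$ is $A$-invariant. The $A$-action on $\Xn$ is classically known to be ergodic (e.g.\ via the Mautner phenomenon, or Howe--Moore applied to any one-parameter subgroup of $A$), so it suffices to show $m(\mathcal{S}) > 0$, which I would do by producing a nonempty open subset of $\mathcal{S}$.

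Set $x_0 \df \Z^n$, which is a generic well-rounded lattice. By Proposition~\ref{prop: WR mfd}, there is an open neighborhood $\cU$ of $x_0$ in $\Xn$ such that $M \df \mathcal{WR} \cap \cU$ is a smooth submanifold of codimension $n-1$. I would then consider the smooth map
$$\phi: A \times M \lra \Xn, \quad (a, y) \longmapsto ay.$$
Its domain has dimension $(n-1) + (\dim \Xn - (n-1)) = \dim \Xn$, so it is enough to exhibit one point at which $\phi$ is a submersion: then $\phi$ is open near that point, the image $\phi(A \times M) \subset \mathcal{S}$ has nonempty interior, and hence $m(\mathcal{S}) > 0$.

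The remaining task is to verify that $\phi$ is a submersion at $(e, x_0)$. Under the matching dimensions, this reduces to the transversality statement $\mathfrak{a} \cap T_{x_0} M = \{0\}$, where $\mathfrak{a}$ is identified with its image under the infinitesimal $A$-action. Using the local chart $g \mapsto g x_0$ to identify a neighborhood of $x_0$ in $\Xn$ with a neighborhood of the identity in $G$, the submanifold $M$ is cut out by the $n-1$ equations $\|g \E_i\|^2 = \|g \E_1\|^2$ for $i = 2, \ldots, n$, whose linearizations at $I$ are $2(dg_{ii} - dg_{11})$. Thus $T_{x_0} M$ is the subspace of $\mathfrak{g}$ on which all diagonal entries coincide, and intersecting with the trace-zero diagonal algebra $\mathfrak{a}$ yields only the zero matrix, which gives the desired transversality.

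The main obstacle I anticipate is setting up the local coordinates and transversality computation cleanly at $x_0$, but the computation itself is elementary. The ergodicity input is classical and essentially amounts to a citation.
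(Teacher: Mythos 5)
Your proposal is correct and follows essentially the same route as the paper's proof: fix the generic well-rounded lattice $\Z^n$, use Proposition \ref{prop: WR mfd} to obtain the codimension-$(n-1)$ submanifold of well-rounded lattices near $\Z^n$, verify transversality of $A\Z^n$ with it via an explicit Lie-algebra computation, and conclude by ergodicity of the $A$-action (Moore's theorem). The only cosmetic difference is in how ergodicity is invoked: you show that the $A$-invariant set $A\cdot\mathcal{WR}$ has positive (hence full) measure by exhibiting an open subset, whereas the paper observes that almost every orbit enters a fixed small neighborhood of $\Z^n$ from which transversality forces the orbit to hit $\mathcal{WR}$.
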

 
\begin{proof}
Let $x_0$ be a generic well-rounded lattice, with shortest vectors
$v_1, \ldots, v_n$, and let $\cU$ be the neighborhood of $x_0$ as in
Proposition \ref{prop: WR mfd}. Suppose in addition that derivatives of the maps $a \mapsto
\|av_i\|, \ i=1, \ldots , n-1$ are linearly independent (seen as linear functionals on the
Lie algebra $\mathfrak{a}$). A simple computation shows that this
condition is satisfied for the lattice $x_0 = \Z^n$. This condition
implies that the orbit $Ax_0$ and the manifold $\mathcal{WR} \cap
\cU$ intersect transversally at $x_0$. In particular there is a neighborhood
$\cU_0 \subset \cU$ such that if $x \in \cU_0$ then there is $a \in A$
such that $ax \in \mathcal{WR}$. Thus any $A$-orbit entering $\cU_0$
contains a well-rounded lattice. By Moore's ergodicity theorem (see
e.g. \cite{Zimmer}) the $A$-action on $\Xn$ is
ergodic, and hence almost every $A$-orbit enters $\cU_0$. 
\end{proof}

\subsection{The structure of closed orbits}
The following
statement was proved by the authors in \cite[Cor. 5.8]{gruber}, using earlier results of \cite{TW}. 

\begin{proposition}\Name{prop: TW, SW}
Suppose $Ax$ is closed. Then there is a decomposition $A = T_1
\times T_2$ and a direct sum decomposition $\R^n = \bigoplus_1^{d} V_i$
such that the following hold:
\begin{itemize}
\item
Each $V_i$ is spanned by some of the standard basis vectors. 
\item
$T_1$ is the group of linear transformations 
which act on each $V_i$ by a homothety, preserving Lebesgue measure on
$\R^n$. In particular $ \dim T_1 = d-1$. 
\item
$T_2$  is the group of diagonal
matrices whose restriction to each $V_i$ has determinant 1.  
\item $T_2 x$ is compact and $T_1 x$ is divergent;
  i.e. $Ax \cong T_1 \times T_2/(T_2)_x$, where
  $(T_2)_x \df \{a \in T_2: ax= x\}$ is cocompact in $T_2$. 
\item 
Setting $\Lambda_i \df V_i \cap x$, each $\Lambda_i$ is a
lattice in $V_i$, so that $\bigoplus \Lambda_i$ is of finite index in
$x$. 
\end{itemize}

\end{proposition}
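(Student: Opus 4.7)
The plan is to extract the decomposition from an analysis of the stabilizer $\Gamma \df \{a \in A : ax = x\}$, which is a closed subgroup of the abelian Lie group $A$. The closedness of $Ax$ gives a homeomorphism $Ax \cong A/\Gamma$, so the structure of $A/\Gamma$ controls the orbit.

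First, I would consider the identity component $\Gamma^0$ and its action on $\R^n$. Since every element of $\Gamma^0 \subset A$ is diagonal with positive entries, the standard basis vectors are joint eigenvectors, and grouping them according to which character of $\Gamma^0$ they realize yields a direct sum decomposition $\R^n = \bigoplus_{i=1}^d V_i$, with each $V_i$ spanned by a subset of the standard basis and the characters of $\Gamma^0$ on distinct $V_i$ mutually distinct. I would then define $T_2$ as the subgroup of $A$ acting with determinant one on each $V_i$, and $T_1$ as the subgroup acting on each $V_i$ by a scalar, subject to the global determinant-one constraint. A dimension count and the transversality of the two conditions give $A = T_1 \times T_2$ with $\dim T_1 = d - 1$.

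The heart of the argument is showing that $T_2 x$ is compact and that $T_1$ acts divergently. Since each $\Lambda_i \df V_i \cap x$ is discrete and $\Gamma^0$ preserves it, the characters by which $\Gamma^0$ acts on different $V_i$ must be compatible with unit-determinant scaling, forcing $\Gamma^0 \subset T_2$. Conversely, because $T_1$ acts by distinct homotheties on different $V_i$, any sequence $t_j \to \infty$ in $T_1$ scales some $\Lambda_i$ toward $0$ or $\infty$, producing very short or very long vectors in $t_j x$ and thus showing $T_1 x$ is divergent in $\Xn$. Combined with the product decomposition $A = T_1 \times T_2$ and the closedness of $Ax$, this forces $\Gamma = (T_2)_x$ to be cocompact in $T_2$, hence $T_2 x \cong T_2/(T_2)_x$ is compact. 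The statements that each $\Lambda_i$ is a lattice in $V_i$ and that $\bigoplus \Lambda_i$ has finite index in $x$ then follow from Mahler's compactness criterion applied to the compact orbit $T_2 x$: if $\Lambda_i$ were not of full rank in $V_i$, or if $x/\bigoplus \Lambda_i$ were infinite, a suitable diverging sequence in $T_2$ would produce arbitrarily short nonzero vectors, contradicting compactness.

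The main obstacle is the rigidity step: upgrading the coarse eigenspace decomposition coming from $\Gamma^0$ into the sharp structural statement that $T_2 x$ is \emph{compact}. A priori, closedness of $Ax$ only guarantees that $\Gamma$ is closed in $A$, not cocompact in $T_2$; bridging this gap relies on the Tomanov--Weiss classification of closed $A$-orbits, which ensures that the decomposition built from $\Gamma^0$ is fine enough to account for all ``compact directions'' of the orbit, with the compact piece $T_2 x$ arising from an order in a product of totally real number fields acting on $\bigoplus \Lambda_i$.
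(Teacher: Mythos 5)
The paper does not actually prove this Proposition: it cites it to \cite[Cor.~5.8]{gruber} (which in turn relies on \cite{TW}). So the question is whether your reconstruction of a proof is sound, and here there is a genuine gap at the very start.

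Your construction of the subspaces $V_i$ as the eigenspaces (weight spaces) of $\Gamma^0$, the identity component of the stabilizer $\Gamma = \{a \in A : ax = x\}$, does not produce the decomposition the Proposition asserts. Consider $x = \Z^n$: the $A$-orbit $A\Z^n$ is closed and divergent, and the stabilizer of $\Z^n$ in $A$ is trivial (a positive diagonal determinant-one matrix preserving $\Z^n$ must be the identity). Hence $\Gamma^0 = \{e\}$, every standard basis vector realizes the trivial character, and your construction yields a single block $V_1 = \R^n$, $d = 1$, $T_2 = A$, $T_1 = \{e\}$. The Proposition would then require $T_2 x = A\Z^n$ to be compact, which is false. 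The correct decomposition for $\Z^n$ is $V_i = \R\E_i$ with $d = n$, $T_1 = A$, $T_2 = \{e\}$. The same failure occurs whenever $Ax$ has unbounded directions not reflected in the connected stabilizer, which is the generic situation.

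The underlying issue is that $\Gamma^0$ only sees stabilizer data, whereas the block decomposition in the Proposition is an \emph{arithmetic} one: by the Tomanov--Weiss classification, a closed $A$-orbit arises from a lattice commensurable (after rescaling blocks) with a direct sum $\bigoplus \Lambda_i$, where each $\Lambda_i$ comes from an order in a totally real number field. The $V_i = \mathrm{span}\,\Lambda_i$ are determined by this arithmetic structure, not by $\Gamma^0$. You acknowledge invoking Tomanov--Weiss at the end, but the way you invoke it --- as confirmation that the $\Gamma^0$-eigenspace decomposition is ``fine enough'' --- is not correct; that decomposition is typically too coarse, and the theorem is what \emph{supplies} the correct $V_i$ rather than certifying yours. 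Once the $V_i$ are taken from the arithmetic data, the remaining steps you outline (defining $T_1, T_2$, checking $\Gamma \subset T_2$, divergence of $T_1 x$ by scaling, cocompactness of $\Gamma$ in $T_2$, and finiteness of $[x : \bigoplus\Lambda_i]$ via Mahler) are essentially the right verifications, so the error is localized but foundational.
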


\subsection{Some preparations}
Let $\E_1, \ldots, \E_n$ denote the standard basis of $\R^n$. For $1\le d\le n$,
let $$\tb{I}^n_d\defi\set{1\le i_1<\dots<i_d\le n}$$  
denote the collection of multi-indices of length $d$ and 
for $J = (i_1, \ldots, i_d)\in\tb{I}^n_d$ let 
$\E_J \df \E_{i_1} \wedge \cdots \wedge \E_{i_d}
$ be the corresponding vector in the $d$-th exterior power of $\R^n$. 
We equip $\bigwedge^d\bR^n$ with the inner product with
respect to which $\{\E_J\}$ is an orthonormal basis, and denote by $\cE_{d,n}$
the quotient of $\bigwedge^d\bR^n$ by the equivalence relation $w 
\sim -w$. Note that the product of an element of $\cE_{d,n}$ with a
positive scalar is well-defined. We will (somewhat imprecisely) refer to elements of $\cE_{d,n}$
as vectors. Given a subspace $L \subset \bR^n$  with $\dim L= d$,
we denote by 
  $w_L\in \cE_{d,n}$ the image of a vector of norm one in
 $ \bigwedge^d L.$
%\item 
If $\Lam \subset \bR^n$ is a discrete subgroup of rank $d$, we
  denote by $w_\Lam\in \cE_{d,n}$ 
the image of the vector 
$v_1\wedge\dots\wedge v_d,$ where $\set{v_i}_1^d$ forms a basis for
$\Lam$. The reader may verify that these vectors are well-defined
(i.e. independent of the choice of the $v_j$) and
satisfy $w_{\Lam} = |\Lam| w_L$ where $L = \spa \, \Lam$ and $|\Lam|$ is
the covolume of $\Lambda$ in $L$, with respect to the volume form
induced by the standard inner product on $\R^n$.  
%\item 
%\item 
We denote the natural action of $ G$ on $\cE_{d,n}$ arising from the
$d$-th exterior power of the linear action on $\bR^n$, by $(g, w)
\mapsto gw$. 
%\item 
Given a subspace $L \subset \bR^n$ and a discrete subgroup $\Lam$ we set 
$$A_L\defi\set{a\in A:
    aw_L=w_L} \text{ and } A_\Lam \df \{a \in A: aw_\Lam = w_\Lam\}.$$
Note that $A_L = A_{\Lam}$ when $\Lam$ spans $L$. 
Note also that the requirement  
$aw_L=w_L$ is equivalent to saying that $aL=L$ and $\det(a|_L)=1$.
%\item 
Given a flag 
\begin{equation}\label{flag}
\crly{F}=\set{ 0 \varsubsetneq L_1\varsubsetneq\dots\varsubsetneq L_k\varsubsetneq \bR^n}
\end{equation} 
(not necessarily full), let 
$A_{\crly{F}}\defi \bigcap_i A_{L_i}.$
%\end{enumerate}
%\end{definition}
%\begin{definition}
The {\em support} of an element $w \in \cE_{d,n}$ is the subset of
$\tb{I}^n_d$ for which the corresponding coefficients of an element of
$\bigwedge^d\bR^n$ representing $w$ are nonzero, and we write 
$\on{supp}(L)$ or $\on{supp}(\Lam)$ for the supports of $w_L$ and
$w_{\Lam}$. For $J = \left(i_1<\dots<i_d \right) \in \tb{I}^n_d$, set $\bR^J
\df \spa \, (\E_{i_j})$ and define the multiplicative characters 
$$ \chi_J: A \to \R^*, \ \chi_J(a) \defi \det
(a|_{\bR^J}).$$
Then 
% we see
%that the maps $\chi_J$ are homomorphisms to the multiplicative group
%$\R^*$, and since these homomorphisms are nontrivial and the $A$-action
%on each $\bigwedge^d \bR^n$ is $\bR$-diagonalizable, one easily checks 
%\end{definition}
%\begin{remark}\label{1636}
%As for any $a\in A$, 
%the standard basis $\set{e_J:J\in\tb{I}^n_d}$ consists
%of eigenvectors for $a$  (with eigenvalues $a_J\defi (\det
%a|_{\bR^J})$), we have that 
for any subspace $L\subset \bR^n$, 
\eq{1636}{A_L=\bigcap_{J \in \supp (L)} \ker \chi_J}
(and similarly for discrete subgroups $\Lam$). 
%On trying to verify the validity of hypothesis~\eqref{09302} of
%Theorem~\ref{thm: covering} the relevant  ``affine subspaces" will be
%of the form 
%$b\pa{\cap_i A_{L_i}}$ for some collections of subspaces $\set{L_i}\subset \bR^n$ and $b\in A$. 
%We now state and prove two Lemmas
%that will help us establish the \textit{almost affineness} in that hypothesis. The first will 
%give us the tool to bound the distance of certain elements from these
%affine spaces and the second will allow us to bound the 
%dimension of these affine spaces.
%We have the following two Lemmas that together are analogue to Theorem~\ref{finite distance from a group}. 
%As in \S \ref{establishing
%  topological input} w

%In order to
%verify hypothesis \eqref{09302} of Theorem \ref{thm: covering}, w
We fix an invariant metric $\dd$ on $A$. 
We will need the following
lemma (cf. \cite[Theorem 6.1]{McMullenMinkowski}): 
\begin{lemma}\label{bdd dist from stab}
Let $T \subset A$ be a closed subgroup and let $x\in\cL_n$ be a
lattice with a
compact $T$-orbit. Then for any $C>0$ there exists  
$R>0$ 
such that for any collection $\set{\Lam_i}$ of subgroups of $x$, there exists $b\in A$ such that 
\begin{equation}\label{2052}
\left \{a\in T:\forall i, \; \norm{aw_{\Lam_i}}\le C \right\} \subset 
\set{a\in A: \dd\left(a,\, b  \left(\cap_iA_{\Lam_i} \right) \right)\le R}.
%R
%\text{-neighborhood of } b\pa{\bigcap_iA_{\Lam_i}}.
\end{equation}
\end{lemma}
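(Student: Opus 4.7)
The plan is to exploit compactness of $Tx$ to reduce to a discreteness argument on the lattice $\bigwedge^{k} x \subset \bigwedge^{k}\R^n$. Since $Tx$ is compact, the stabilizer $T_x \defi \set{t \in T : tx = x}$ is cocompact in $T$; I fix a compact set $K \subset T$ with $T = T_x K$, so every $a \in T$ factors as $a = tk$ with $t \in T_x$ and $k \in K$. By commutativity of $A$ together with the identity $tw_\Lam = w_{t\Lam}$, one has $aw_{\Lam_i} = kw_{t\Lam_i}$; and since $K$ is compact there is $\kappa > 0$ with $\|kw\| \in [\kappa^{-1}\|w\|, \kappa\|w\|]$ for all $k \in K$ and $w \in \bigwedge^{d}\R^n$, $1 \le d \le n$. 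Hence the constraint $\|aw_{\Lam_i}\| \le C$ forces $\|w_{t\Lam_i}\| \le C'' \defi \kappa C$.

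The key observation is that $t\Lam_i \subset tx = x$, so $w_{t\Lam_i}$ lies in the lattice $\bigwedge^{r_i} x$ in $\bigwedge^{r_i}\R^n$. Therefore $\bigwedge^{r_i} x \cap \overline{B(0, C'')}$ is finite for each $r_i$, and the union over the finitely many $r_i \in \set{1,\dots,n}$ yields a finite subset $F \subset \bigcup_d \bigwedge^{d} \R^n$ depending only on $x$ and $C$. Let $\mu \defi \mu(x,C) > 0$ be the minimum absolute value of the nonzero coordinates of vectors in $F$.

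If the set $E \defi \set{a\in T : \forall i,\, \|aw_{\Lam_i}\|\le C}$ is empty, the statement is vacuous; otherwise fix $a_0 = t_0 k_0 \in E$ and take $b \defi a_0$. For any $a = tk \in E$, set $s \defi tt_0^{-1}\in T_x$ and $h \defi kk_0^{-1}\in KK^{-1}$; commutativity gives $ab^{-1} = sh$. The equality $sw_{t_0\Lam_i} = w_{t\Lam_i}$ together with support preservation by the diagonal element $s$ (whose entries are positive) yields, for every $J \in \supp(\Lam_i)$,
\[
|\chi_J(s)| \;=\; \frac{|(w_{t\Lam_i})_J|}{|(w_{t_0\Lam_i})_J|} \;\in\; [\mu/C'',\; C''/\mu],
\]
since $w_{t\Lam_i}, w_{t_0\Lam_i} \in F$ have nonzero coordinates of absolute value in $[\mu, C'']$. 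Applying this for all $i$, the bound holds for every $J \in S \defi \bigcup_i \supp(\Lam_i)$, and in view of \eqref{1636} this confines $s$ to a bounded region of $A/H$ with $H \defi \bigcap_i A_{\Lam_i}$. Combined with the boundedness of $h$ in the compact set $KK^{-1}$, this gives $\dd(ab^{-1}, H) \le R$ for a uniform constant $R$ depending only on $C, T, x$, as required.

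The main step requiring care is securing the uniform positive lower bound $\mu$; it rests on the discreteness of $\bigwedge^{k} x$ in $\bigwedge^{k}\R^n$, which is uniform across the finitely many possible ranks. The rest is a character-level computation enabled by the decomposition $a = tk$, which is precisely where the hypothesis that $Tx$ is compact (rather than merely closed) enters decisively: it is used to ensure $t \in T_x$ fixes the lattice $x$, bringing $w_{t\Lam_i}$ into the fixed discrete lattice $\bigwedge^{r_i} x$.
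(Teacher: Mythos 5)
Your proof is correct and follows essentially the same strategy as the paper's: decompose each $a\in T$ using the cocompactness of the stabilizer $T_x$ in $T$, invoke the discreteness of the set of vectors $w_\Lambda$ ($\Lambda\subset x$) in exterior powers to extract a uniform lower bound $\mu$ on nonzero coordinates, bound each character $\chi_J$ for $J$ in the joint support, and convert this via \equ{1636} and the finiteness of the possible supports $S$ to a uniform distance bound from the subgroup $\cap_i A_{\Lambda_i}$. The paper organizes this as a two-step reduction (first proving the bound for the fixed finite collection $\crly{S}=\{\Lambda\subset x:\|w_\Lambda\|\le C'\}$, then translating a general collection into $\crly{S}$ by the element $b(a_0)$), whereas you bypass that reduction by computing $|\chi_J(s)|$ directly for $s=tt_0^{-1}$ and taking $b=a_0$ rather than the stabilizing part $t_0$; since $k_0$ ranges over a fixed compact set, this choice changes $R$ only by a bounded amount, so the difference is purely expository.
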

\begin{proof}
In the argument below we will sometimes identify $A$ with its Lie algebra $\mathfrak{a}$ via the
exponential map, and think of the subgroups $A_\Lam$ as
subspaces. %We will also assume 
%with no loss of generality that the distance function $d$ %denote an invariant distance function on $A$ (more concretely, we take
%$d$ to be
%is %induced by 
%the euclidean norm on $\mathfrak{a}$. 
By \equ{1636} only finitely many subspaces arise as $A_\Lam$.
%\begin{equation}\label{1637}
%\bigcap_i A_{L_{\Lam_i}}= \bigcap_{J \in \cup \supp (\Lam_i)} \ker \chi_J.
%\end{equation}
In particular, given a collection of discrete subgroups
$\set{\Lam_i}$, the angles between the spaces they span (if nonzero) are bounded
below, by a bound which is independent of the $\set{\Lam_i}$. Therefore
%, for a sequence
%$\set{a_n}$ in $A$,
%\begin{align*}
%d(a_n, \cap_i A_{\Lam_i})\to\infty &\Longleftrightarrow \max_i
%d(a_n, A_{\Lam_i})\to\infty\\ 
%&\overset{\eqref{1636}}{\Longleftrightarrow
%}\exists\; J\in\cup_i \on{supp}(w_{L_{\Lam_i}}),\; \chi_J(a_n)\to
%0\textrm{ or }\infty. 
%\end{align*}
%From here it follows that
there exists a function $\psi:\bR\to\bR$ with $\psi(R)\to_{R\to\infty}\infty$, such that   
\begin{align}\label{336}
&\set{a\in A: \forall\; J\in\cup_i \on{supp}(w_{\Lam_i}),\; \psi(R)^{-1}\le \chi_J(a)\le \psi(R)}\subset\\
\nonumber &\set{a\in A: \dd(a,\cap_iA_{\Lam_i})\le R}.
\end{align}

Since $Tx$ is compact,  there exists a compact subset
$\Om\subset T$ %satisfying $\Om x= Tx$, i.e. 
such that for any $a\in T$ there exists $b = b(a) \in T$ satisfying $bx=x$ and $b^{-1}a\in\Om$.
It follows that there exists $M \geq 1$ such that:
\begin{enumerate}[(I)]
\item\label{1708} for any subspace $L$, $||bw_L||\le M||aw_L||$. 
\item\label{1747} for any multi-index $J$, $\chi_J(ba^{-1})\le M$.
\end{enumerate}

Given $C>0$, let $C' \df MC$ and consider the finite set 
$$\crly{S}\defi \set{\Lam \subset x: ||w_{\Lam}||\le C'}.$$
For any $\Lam\in\crly{S}$ write 
$w_{\Lam}=\sum_{J\in\on{supp}(w_{\Lam})} \al_J(\Lam) \E_J.$ 
Let 
\begin{align*}
0< \vre&<\min\set{\av{\al_J(\Lam)}:\Lam\in\crly{S}, J\in\on{supp}(w_{\Lam})},
%\vre^{-1}&>\max\set{\av{\al_J(\Lam)}:\Lam\in\crly{S}, J\in\on{supp}(w_{\Lam})}.
\end{align*}
and choose $R$ large enough so that $\psi(R)>C'/\vre$. 
We claim that 
for any $\set{\Lam_i}\subset \crly{S}$,
\begin{equation}\label{2053}
\set{a\in T:\forall i\; \norm{aw_{\Lam_i}}\le C}\subset \set{a\in T: \dd(a, \cap_i A_{\Lam_i})\le R}.
\end{equation}
To prove this claim, suppose $a$ is an element on the left hand side
of~\eqref{2053}. 
By~\eqref{336}
% in order to establish its inclusion in the set on the 
%right hand side of~\eqref{2053} 
it is enough to show that for any $J\in\cup_i\on{supp}(\Lam_i)$ we have
$\psi(R)^{-1}\le \chi_J(a)\le\psi(R)$.  
% To this end fix $i$ and
%$J\in\on{supp}(\Lam_i)$. 
Since the coefficient of $\E_J$ in the
expansion of $aw_{\Lam_i}$ is $\chi_J(a)\al_J(\Lam_i)$ and since 
%then the fact that 
$||aw_{\Lam_i}||\le C$, we have 
$$\chi_J(a)\le
\frac{C}{|\al_J(\Lam_i)|}\le\frac{C}{\vre}\le \psi(R).$$ 
On the other hand, letting $b = b(a)$ 
%by Step 2 we know that there exists $b\in T$ such
%that $bx=x$ and  
%statements~\eqref{1708},\eqref{1747}  
%hold. 
we have $b\Lam_i\in\crly{S}$ from \eqref{1708}, and 
\begin{align*}
 \vre\le |\al_J(b\Lam_i)| & =\chi_J(b) |\al_J(\Lam_i)| \ \Longrightarrow \ \chi_J(b^{-1})\le C/\vre \\
&\overset{\textrm{\eqref{1747}}}{\Longrightarrow} \ \chi_J(a^{-1})=\chi_J(a^{-1}b)\chi_J(b^{-1})\le C'/\vre\le\psi(R),
\end{align*}
which completes the proof of \eqref{2053}.

%Let $C>0$ be given and let $R>0$ be chosen as shown in Step 3. 
Let $\set{\Lam_i}$ be any collection of subgroups of $x$
and assume that the set on the left hand side of ~\eqref{2052} is non-empty. That is, there 
exists $a_0\in T$ such that for all $i$, $||a_0w_{\Lam_i}||\le C$. 
Let $b = b(a_0)\in T$, and set $\Lam'_i \df b\Lam_i$. It follows that $\set{\Lam'_i}\subset\crly{S}$ 
and so 
\begin{align*}
\set{a\in T:\forall i\norm{aw_{\Lam_i}}\le C} 
&=b\set{a\in T: \forall i \norm{aw_{\Lam'_i}}\le C}\\
&\stackrel{\eqref{2053}}{\subset} b \set{a\in T: \dd(a,\cap_i A_{\Lam_i'})\le R}\\
&= \set{a\in T: \dd(a, b\pa{\cap_i A_{\Lam_i}})\le R},
\end{align*}
where in the last equality we used the fact that
$A_{\Lam_i'}=A_{\Lam_i}$ because $A$ is commutative.
\end{proof}

\begin{lemma}\Name{lem: flag}
Let $\crly{F}$ be a flag of length $k$ as in \eqref{flag} and let
$A_{\crly{F}}$ be its stabilizer. Then $A_{\crly{F}}$ is of
co-dimension  
$\ge k$ in $A$.
\end{lemma}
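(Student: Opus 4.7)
The plan is to reduce the statement to a linear algebra problem in the Lie algebra $\goa$ of $A$ and then exhibit $k$ linearly independent characters vanishing on the Lie algebra of $A_{\crly{F}}$. Using~\equ{1636}, the Lie algebra of $A_L$ equals $\bigcap_{J \in \supp L}\ker\chi_J$, where each $\chi_J$ is viewed as a linear functional on $\goa$. Since $A$ is abelian and isomorphic to $\R^{n-1}$ as a Lie group via $\log$, I obtain $\on{codim} A_{\crly{F}} = \dim W$, where
$$W \df \spa\set{\chi_J \in \goa^* : J \in \supp L_i \text{ for some } i},$$
so it suffices to prove $\dim W \ge k$.

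To exhibit $k$ independent characters in $W$, I will construct a nested chain $J_1 \subsetneq J_2 \subsetneq \cdots \subsetneq J_k$ with $J_i \in \supp L_i$, starting from an arbitrary $J_1 \in \supp L_1$. The key tool is the standard characterization $J \in \supp L \iff L + \R^{J^c} = \R^n$, equivalently that the coordinate projection $\pi_J \colon L \to \R^J$ is a linear isomorphism. Given $J_i \in \supp L_i$, the restriction of $\pi_{J_i}$ to $L_{i+1}$ is still surjective, so its kernel $L' \df L_{i+1} \cap \R^{J_i^c}$ has dimension $d_{i+1} - d_i > 0$, where $d_j \df \dim L_j$. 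Any $J' \in \supp L'$ (viewing $L'$ as a subspace of $\R^{J_i^c}$) satisfies $J' \subset J_i^c$ and $|J'| = d_{i+1}-d_i$, and I define $J_{i+1} \df J_i \cup J'$. To verify $J_{i+1} \in \supp L_{i+1}$, I use the decomposition $\R^{J_i^c} = \R^{J'} \oplus \R^{J_{i+1}^c}$ and the isomorphism $L' \cong \R^{J'}$ to conclude $\R^{J'} \subset L_{i+1} + \R^{J_{i+1}^c}$, which combined with $L_{i+1} + \R^{J_i^c} \supset L_i + \R^{J_i^c} = \R^n$ gives $L_{i+1} + \R^{J_{i+1}^c} = \R^n$.

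It remains to verify the linear independence of $\chi_{J_1}, \ldots, \chi_{J_k}$ in $\goa^*$. The indicator vectors $\mathbf{1}_{J_1}, \ldots, \mathbf{1}_{J_k} \in \R^n$ have strictly nested supports and hence are linearly independent in $\R^n$. Since $d_k < n$, the complement $\set{1,\ldots,n} \sm J_k$ is non-empty, so the vector $(1,\ldots,1)$ does not lie in their span. Passing to the quotient $\goa^* \cong \R^n/\R(1,\ldots,1)$, the $\chi_{J_i}$ remain linearly independent elements of $W$, yielding $\dim W \ge k$.

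The main technical obstacle is the inductive extension step: checking that adjoining $J'$ to $J_i$ produces a multi-index that still lies in $\supp L_{i+1}$. The rest of the argument is a straightforward unwinding of~\equ{1636} together with the reformulation of $\supp L$ in terms of coordinate projections.
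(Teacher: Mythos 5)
Your proof is correct. The overall architecture matches the paper: both reduce to constructing a nested chain of multi-indices $J_1 \subsetneq \cdots \subsetneq J_k$ with $J_i \in \supp L_i$, and then observe that the corresponding characters $\chi_{J_i}$ are linearly independent on $\goa$. Where you differ is in \emph{how} the nested chain is produced. The paper first reduces (with no loss of generality) to a complete flag and runs the induction downward: starting from $J_n = \set{1,\dots,n}$, it extracts $J_d \subset J_{d+1}$ by Laplace-expanding $\det S_{J_{d+1}}$ along its last column and picking a nonvanishing cofactor. You run the induction upward from an arbitrary $J_1 \in \supp L_1$, using the geometric reformulation $J \in \supp L \iff L \cap \R^{J^c} = \{0\}$: the kernel $L' = L_{i+1} \cap \R^{J_i^c}$ of the coordinate projection $\pi_{J_i}|_{L_{i+1}}$ has the right dimension, and adjoining to $J_i$ a support multi-index of $L'$ (inside $\R^{J_i^c}$) gives $J_{i+1}$. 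The upward version is arguably cleaner in that it handles non-complete flags directly and avoids determinant bookkeeping, at the cost of invoking (and having to verify) the projection characterization of $\supp$; the paper's downward version is shorter but needs the harmless ``complete the flag'' reduction. You also spell out the linear-independence step (nested indicator vectors are independent, and $(1,\dots,1)$ avoids their span since $d_k < n$), which the paper leaves as ``it is clear''; this is worth having on the page. Both proofs are complete and correct.
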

\begin{proof}
Given a nested sequence of multi-indices
$J_1\varsubsetneq\dots\varsubsetneq J_k$ it is clear that the subgroup
$$
\bigcap_{i=1}^k \ker \chi_{J_i}
$$
is of co-dimension $k$ in $A$. In light of \eqref{1636},
it suffices to prove the following claim: 
\quad\\
\noindent \textit{Let $\crly{F}$ be a flag as
  in~\eqref{flag} with $d_i\defi \dim L_i$. Then there is a nested
  sequence 
of multi-indices $J_i\in\tb{I}^n_{d_i}$ such that $J_i\in\on{supp}(L_i)$.}
\quad\\

%\noindent Once the Claim is proved, we have that $A_{\crly{F}}$ is a
%subgroup of a group of co-dimension $k$ and therefore is  
%of co-dimension $\ge k$ as desired.

In proving the claim we will assume with no loss of generality that
the flag is complete. Let $v_1,\dots, v_n$ be a basis of $\bR^n$ such
that  $L_i=\on{span}\set{v_j}_{j=1}^i$ for 
$i=1,\dots, n-1.$  
Let $S$ be the $n\times n$ matrix whose columns are $v_1, \dots, v_n$.
Given a multi-index $J$ of length $\av{J}$, we denote by $S_J$
the square matrix of dimension $\av{J}$ obtained  
from $S$ by deleting the last $n-\av{J}$ columns and the rows
corresponding to the indices not in $J$.  Note that with this
notation, each $w_{L_d}$ is the image in $\cE_{d,n}$ of a vector proportional to 
\begin{equation}\label{1159}
%w_{L_d}=
v_1\wedge\dots\wedge v_d=\sum_{J\in \tb{I}^n_d}  (\det S_J) \E_J.
\end{equation}
In particular,  $J\in\on{supp}(L_d)$ if and only if $\det S_J\neq 0$.

Proceeding inductively in reverse, we construct the nested sequence
$J_d$ by induction on $d =n, \ldots, 1$. Let $J_n=\set{1,\dots, n}$ so that
$S=S_{J_n}$.  
Suppose we are given  multi-indices
$J_{n}\supset\dots\supset J_{d+1}$ such that  
$J_i\in\on{supp}(w_{L_i})$ 
for $i=n,\dots, d+1$. We want to define now a multi index
$J_d\in\on{supp}(w_{L_d})$ which is  
contained in $J_{d+1}$. By~\eqref{1159}, $\det S_{J_{d+1}}\neq 0$. When
computing $\det S_{J_{d+1}}$ by expanding the last column we express
$\det S_{j_{d+1}}$ 
as a linear combination of $\set{\det S_J:J\subset J_{d+1},
  \av{J}=d}$. We
conclude that there must exist at least one multi-index $J_d\subset
J_{d+1}$  for which $\det S_{J_d}\ne 0$. In turn, by~\eqref{1159} 
this means that $J_d\in\on{supp}(w_{L_d})$. This finishes the proof
of the claim. 
\end{proof}

\section{Reduction to a topological statement} \Name{sec:
  closed orbits} 
\ignore{

It is an immediate consequence of Theorem \ref{thm: main} that any
closed $A$-orbit contains a stable lattice. The purpose of this
section is to show that the same is true for the set of well-rounded
lattices. 
Note that this was proved by McMullen for {\em compact} orbits but
for general  closed orbits, does not follow from his results. Our
proof relies on previous work of Tomanov and the second-named author
\cite{TW}, on \cite{gruber}, and on a covering result (communicated to
the authors by Michael Levin), whose proof is given in the appendix to
this paper. 

\begin{theorem}\Name{thm: closed orbits}
For any $n$, any closed orbit $Ax \subset \Xn$ contains a well-rounded lattice. 
\end{theorem}
}
We will require the following topological result which generalizes 
Theorem 5.1 of~\cite{McMullenMinkowski}. 
Let $s,t$ be non-negative integers, and let $\Delta$ denote the 
$s$-dimensional simplex, which we think of concretely as $\mathrm{conv} (\E_1,
\ldots, \E_{s+1})$, where the $\E_j$ are the standard basis vectors in
$\R^{s+1}$. % Also let $\TT$ denote the $t$-dimensional torus. 
We will discuss covers of $M \df \Delta \times \R^t $, and give conditions
guaranteeing that such a cover must cover a point at least $s+t+1$
times. %Let $\cU$ be a cover of $\TT \times \Delta$ by connected
%open sets. 
%We will say that $\cU$ has positive inradius if Definition
%\ref{def: inradius} holds with $\R^n \times \Delta$ in place of $A$, and say that
%$U \subset \R^n \times \Delta$ is $(R,k)$-{\em almost affine} if it is
%contained in the $R$-neighborhood of a set of the form $V \times
%\Delta$ where $V$  is a $k$-dimensional affine subspace of $\R^n$. 
For
$j=1, \ldots, s+1$ let $F_j$ be the face 
of $\Delta$ opposite to $\E_j$, that is $F_j = \mathrm{conv} (\E_i: i
\neq j)$. Also let $M_j \df F_j \times  \R^t $ be the corresponding
subset of $M$. Given $R>0$ and a positive integer $k$, we say that a
subset $U\subset \R^t$ is $(R,k)$-\textit{almost affine} if it is 
contained in an $R$-neighborhood of a $k$-dimensional affine subspace of $\R^t$. 

\begin{theorem}%[Michael Levin]
\Name{thm: covering}
Suppose that  $\cU$ is a cover of $M$ 
by open sets 
%with positive inradius, \marg{Maybe inradius can be removed.} 
satisfying the following conditions:
\begin{enumerate}[(i)]
\item\Name{09301}
For any connected component $U$ of any element of  $\cU$ there exists $j$ 
such that $U \cap M_j = 
\varnothing.$  
\item\Name{09302}
There is $R$ so that for 
any connected component $U$ of the intersection of $k \leq s+t$ distinct 
%connected components of 
elements of $\cU$, 
the projection of $U$ to $\R^t$ is $(R, s+t-k)$-almost
affine.
\end{enumerate}
Then
there is a point of $M $ which is covered at least
$s+t+1$ times.

\end{theorem}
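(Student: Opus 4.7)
The plan is to prove Theorem~\ref{thm: covering} by induction on the combined dimension $s+t$, combining a Sperner/KKM-style combinatorial argument for the simplex factor with a Lebesgue-style dimension argument for the Euclidean factor. Condition (i) will supply the Sperner coloring, and condition (ii) will control how dimensions drop under intersections. The base case $s+t=0$ is immediate.

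For the inductive step, I would assume for contradiction that no point of $M$ is covered $s+t+1$ times. Using condition (i), label each connected component $V$ of each $U \in \cU$ by an index $j(V) \in \{1,\ldots,s+1\}$ with $V \cap M_{j(V)} = \varnothing$, and set $W_j \defi \bigcup\{V : j(V)=j\}$. Then $\{W_1,\ldots,W_{s+1}\}$ is an open cover of $M$ with $W_j \cap M_j = \varnothing$, so applying the continuous KKM theorem slicewise over $\R^t$ yields a point in $W_1 \cap \cdots \cap W_{s+1}$. Unwinding the definitions, one obtains connected components $V_1,\ldots,V_{s+1}$ of distinct elements $U_1,\ldots,U_{s+1}\in\cU$, of colors $1,\ldots,s+1$ respectively, with $V_1\cap\cdots\cap V_{s+1}\neq\varnothing$. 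Condition (ii) with $k=s+1$ then forces any connected component of this intersection to project to an $(R,t-1)$-almost affine subset of $\R^t$.

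I would then localize to such a connected component $Y$ of $V_1\cap\cdots\cap V_{s+1}$ and consider the induced cover $\cU' \defi \{U \cap Y : U \in \cU \setminus \{U_1,\ldots,U_{s+1}\}\}$. Since the projection of $Y$ lies in an $R$-neighborhood of a $(t-1)$-dimensional affine subspace of $\R^t$, the Euclidean dimension has effectively dropped by one. Applying the inductive hypothesis with parameters $(s',t')=(0,t-1)$ to this restricted problem would yield $t$ additional distinct cover elements whose components meet at a common point of $Y$, giving in total $s+t+1$ elements of $\cU$ with a common point, the desired contradiction.

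The main obstacle is that the projection of $Y$ is only $R$-close to an affine subspace, not contained in one, so the induction cannot literally be applied to the projection. To surmount this, I would strengthen the inductive statement so that the ambient space is allowed to be of the form $\Delta \times N$ where $N$ is any $(R,t')$-almost affine subset of some $\R^T$, and verify that each inductive step preserves this enlarged class (almost-affine intersected with almost-affine remains almost-affine in the appropriate sense, with the dimension parameter decreasing). A secondary technical issue is passing from a possibly infinite cover to a locally finite one for which the KKM step can be made rigorous; this should follow by a standard paracompactness argument, exploiting the local control provided by condition (ii).
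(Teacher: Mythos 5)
The proposal breaks at the localization step, and the gap is structural rather than technical. After the KKM slicewise argument yields a point $p$ of multiplicity $s+1$, you fix the component $Y$ of $V_1 \cap \cdots \cap V_{s+1}$ containing $p$, discard $U_1, \ldots, U_{s+1}$, and want to apply the inductive hypothesis to $\cU' = \{U \cap Y : U \in \cU \setminus \{U_1, \ldots, U_{s+1}\}\}$. But nothing forces $\cU'$ to cover $Y$: a point $q \in Y$ may be covered \emph{only} by $U_1, \ldots, U_{s+1}$ (it then has multiplicity exactly $s+1$, which does not contradict the hypothesis that no point has multiplicity $s+t+1$ when $t \geq 1$). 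If $\cU'$ is not a cover of $Y$, the inductive hypothesis simply does not apply, and the contradiction you are after never materializes. Equivalently, the point of multiplicity $s+t+1$ that the theorem promises has no reason to lie inside the particular $Y$ you chose, so any argument that commits to a single $Y$ and searches for the deep intersection there cannot work; the phenomenon is global. (The secondary issues you flag --- the almost-affine, not affine, ambient after projection, and passing to a locally finite refinement --- are real but subordinate; the covering failure is the one that cannot be patched within the proposed scheme.)

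By contrast, the paper's argument is a global refine-then-count strategy, not a localize-and-induct one. Assuming for contradiction that $\order(\cU) \leq s+t$, hypothesis \eqref{09302} is used to show that the connected components of $k$-fold intersections are uniformly of asymptotic dimension at most $s+t-k$; a coarse-geometric refinement lemma (Proposition~\ref{p2}, proved by a downward induction over $k$ together with Lemma~\ref{p1}) then replaces $\cU$ by a \emph{uniformly bounded} open cover of the same order. Because the new cover is uniformly bounded in the $\R^t$ direction and, via \eqref{09301}, each piece misses a face of $\Delta$, one can place a large simplex $\Delta_1 \subset \R^t$ and invoke a product KKM/Lebesgue statement (Proposition~\ref{p3}: a cover of $\Delta_1 \times \Delta_2$ in which every set misses a face in each factor has order at least $\dim \Delta_1 + \dim \Delta_2 + 1$) to conclude the order was at least $s+t+1$ after all. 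Your slicewise-KKM observation is genuinely in the spirit of Proposition~\ref{p3}, but the crucial missing ingredient is the passage from ``almost-affine control on intersections'' to ``uniformly bounded refinement of the same order,'' which is what makes the final counting argument apply. If you want an alternative to the paper's asymptotic-dimension machinery, look at \S\ref{section: karasev}: there the theorem is reduced to McMullen's $s=0$ case by unfolding $\Delta \times \R^t$ to $\R^{s+t}$ using the affine Coxeter group generated by reflections in the facets of $\Delta$; that route sidesteps the explicit refinement but still works globally rather than by localizing to one component.
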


%%%%%%%%%%%%%%%%%%%%%%%%%%
%%%%%%%%%%%%%%%%%%%
%                                  This is the place to start the cut and paste from
%%%%%%%%%%%%%%%%%%%%%
%%%%%%%%%%%%%%%%%%%%%
The case $s=0$ is McMullen's result, and the case $t=0$ is known as
the Knaster-Kuratowski-Mazurkiewicz theorem (see e.g. \cite{karasev}). 
Note that hypothesis~\eqref{09302} is trivially satisfied when $k \leq s $,
since any subset of $\R^t$ is $(1, t)$-almost affine. 
We will prove Theorem \ref{thm: covering} in \S \ref{appendix: Levin}. In this
section we use it to
prove Theorem \ref{thm: main}.

%The following notation is analogous to Definition \ref{bn}.
Given a lattice $x\in \Xn$ let $\alpha(x)$ denote the
length of a shortest nonzero vector in $x$. Given $\del>0$ let 
\begin{align*}
\on{Min}_{\del}(x)&\defi\set{v \in x \sm \{0\} : \|v\|<(1+\del)\al(x)}\\
\tb{V}_{\del}(x)&\defi\on{span}\on{Min}_{\del}(x)\\
\dim_\del(x)&\defi\dim\tb{V}_{\del}(x).
\end{align*}
Finally, for $\vre>0$, let $\cU^{(\vre)}=\set{U_j^{(\vre)}}_{j=1}^n$
be the collection of open subsets of $A$ defined by  
\eq{eq: cover}{ U_j = U_j^{(\vre)} \df \{a \in A: \text{for all }
  \delta \text{ in a neighborhood of }
j\vre, \, \dim_{\del} (ax) =j \}. }
Note that these sets depend on $x \in \Xn$ but in our application $x$
will be considered fixed and so we suppress this dependence from our
notation. 
By \cite[Thm. 7.2]{McMullenMinkowski}, 
%Similarly to the discussion in Lemma \ref{lem: positive inradius} we see that 
$\cU^{(\vre)}$ is an open cover of $A$.
\begin{lemma}\Name{lem: compactness}
For any $n$ there is a compact $K \subset \Xn$ such that if $x \in
\Xn$ and $a \in U_n^{(\vre)}$ for $\vre <1$, then $ax \in K$.  
\end{lemma}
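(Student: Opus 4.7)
The plan is to invoke Mahler's compactness criterion, for which it suffices to produce a uniform lower bound $\alpha(ax) \geq c_n > 0$ whenever $a \in U_n^{(\vre)}$ and $\vre < 1$.

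First I would unpack the hypothesis. If $a \in U_n^{(\vre)}$ then in particular $\dim_{n\vre}(ax) = n$, so by definition of $\mathbf{V}_{n\vre}(ax)$ there exist $n$ linearly independent vectors $v_1, \ldots, v_n \in \on{Min}_{n\vre}(ax)$, i.e.
\[
\|v_i\| < (1 + n\vre)\, \alpha(ax) \quad \text{for } i = 1, \ldots, n.
\]
Using $\vre < 1$ we get the cleaner bound $\|v_i\| < (1+n)\alpha(ax)$.

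Next, let $\Lam \subset ax$ be the full-rank sublattice generated by $v_1, \ldots, v_n$. Since $\Lam$ has finite index in $ax$ and $ax$ is unimodular, $|\Lam| = [ax:\Lam] \geq 1$. On the other hand, Hadamard's inequality (or the Gram-Schmidt computation recalled in the excerpt) gives
\[
|\Lam| \leq \prod_{i=1}^n \|v_i\| < \bigl((1+n)\alpha(ax)\bigr)^n.
\]
Combining the two inequalities yields $\alpha(ax) > \frac{1}{1+n}$, a bound depending only on $n$.

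Finally, by Mahler's compactness criterion, the set
\[
K \defi \left\{ y \in \Xn : \alpha(y) \geq \tfrac{1}{1+n} \right\}
\]
is a compact subset of $\Xn$, and by the above $ax \in K$. There is no real obstacle here; the only subtle point is ensuring the independence of $c_n$ from $x$ and $a$, which is automatic because the argument uses only the unimodularity of $ax$ together with the structural hypothesis $\dim_{n\vre}(ax) = n$ contained in the definition of $U_n^{(\vre)}$.
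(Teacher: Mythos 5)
Your proof is correct and follows essentially the same route as the paper's: extract $n$ linearly independent vectors of length $< (1+n)\alpha(ax)$ from the definition of $U_n^{(\vre)}$, use unimodularity to get a uniform lower bound on $\alpha(ax)$, and conclude by Mahler's compactness criterion. The only difference is that you make the constant explicit via Hadamard's inequality where the paper leaves it implicit.
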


\begin{proof}
If $a \in U_{n}^{(\vre)}$ and $\vre <1$ then $ax$ has $n$ linearly
independent vectors of length at most $(n+1)\alpha(ax)$. Since $ax$ is
unimodular, there is a constant $C$ (depending only on $n$) such that
$\alpha(x) \geq C$. The set $K \df
\{x \in \Xn: \alpha(x) \geq C\}$ is compact by Mahler's compactness
criterion, and fulfills the requirements.
\end{proof}

\begin{proof}[Proof of Theorem \ref{thm: main}]  
It suffices to show that for each $\vre>0$, 
$U_n^{(\vre)} \neq
  \varnothing.$ Indeed, if this is the case, then 
letting $\vre_j$ be a sequence of positive numbers such that 
$\vre_j
  \to 0$, for each $j$, we let $a_j \in U_n^{(\vre_j)} $. 
By Lemma \ref{lem: compactness}, the lattices $a_jx$ belong to a fixed
compact set $K$, so there is subsequence converging to some $x_0 \in
K$; we continue to denote the subsequence by $(a_jx)$. For each $j$ there are
linearly independent $v^{(j)}_1, \ldots, v^{(j)}_n \in x$ such that
$\left\|av^{(j)}_i \right\| \leq (1+\vre_j)\alpha(a_jx)$. The angle between each
$a_j v^{(j)}_i$ and the space spanned by the other $a_j v^{(j)}_\ell,
\ell \neq i$ is bounded from below independently of $j$. Passing to a
subsequence we can assume that each $a_j v^{(j)}_i$ converges to a
nonzero vector $v_i \in x_0$. Since $\alpha$ is a continuous function,
the $v_i$ all have length equal to $\alpha(x_0)$, and by the lower
bound on the angles between them, they are linearly independent; that
is, $x_0$ is well-rounded. 

In order to prove that $U_n^{(\varepsilon)}$ is non-empty, we will
apply Theorem~\ref{thm: covering}. 
The first step %towards applying Theorem~\ref{thm: covering} 
is to find a decomposition
 $A\simeq\bR^{n-1}=\bR^s\times\bR^t$ 
and a simplex $\Del\subset \bR^s$, so that the restriction of the cover to
$\Del\times\bR^t$ satisfies the two hypotheses of Theorem~\ref{thm: covering}.

Let $A = T_1 \times  T_2$ and $\R^n = \bigoplus_1^{d} V_i$ be the
decompositions as in Proposition 
\ref{prop: TW, SW}, and let $s \df \dim T_1 = d-1$. 
\ignore{
, there is a
decomposition $A = T_1
\times T_2$ and
such that the following hold:
\begin{itemize}
\item
Each $V_i$ is spanned by some of the standard basis vectors. 
\item
$T_1$ is the group of linear transformations 
which act on each $V_i$ by a homothety, preserving Lebesgue measure on
$\R^n$. In particular $s \df \dim T_1 = d-1$. 
\item
$T_2$  is the group of diagonal (with respect to the standard basis)
matrices whose restriction to each $V_i$ has determinant 1.  
\item $T_2 x$ is compact and $T_1 x$ is divergent;
  i.e. $Ax \cong T_1 \times T_2/(T_2)_x$, where
  $(T_2)_x \df \{a \in T_2: ax= x\}$ is cocompact in $T_2$. 
\item 
Setting $\Lambda_i \df V_i \cap x$, each $\Lambda_i$ is a
lattice in $V_i$, so that $\bigoplus \Lambda_i$ is of finite index in
$x$. 
\end{itemize}
}
For $a \in T_1$ we denote by $\chi_i(a)$ the number
satisfying $av = e^{\chi_i(a)}v$ for all $v \in V_i$. Thus each $\chi_i$
is a homomorphism from $T_1$ to the additive group of real
numbers. The mapping $a \mapsto \bigoplus_i \chi_i(a)
\mathrm{Id}_{V_i}$, where $\mathrm{Id}_{V_i}$ is the identity map on
$V_i$,  is nothing but the logarithmic map of $T_1$ and it endows
$T_1$ with the structure of a vector space. In particular we can discuss the
convex hull of subsets of $T_1$. 
For each $\rho$ we let
$$\Delta_\rho \df \{a \in T_1: \max_i \chi_i(a) \leq \rho\}.$$  
Then
$\Delta_\rho = \conv (b_1, \ldots, b_d)$ where $b_i$ is the diagonal
matrix acting on each $V_j, \, j \neq i$ by multiplication by $e^\rho$, and
contracting $V_i$ by the appropriate constant ensuring that $\det b_i
=1$.

Let $P_i : \R^n \to 
V_i$ be the natural projection associated with the decomposition $\R^n =
\bigoplus V_i$. Since $\bigoplus \Lambda_j$ is of finite index in $x$, each
$P_i(x)$ contains $\Lambda_i$ as a subgroup of 
finite index and hence is discrete in $V_i$. Moreover, the orbit $T_2 x$ is compact, so
for each $a \in T_2$ there is $a'$ belonging to a bounded subset of $T_2$
such that $ax=a'x$. This implies that there is
$\eta>0 $ such that for any $i$ and any $a \in T_2$, if $v \in ax$ and $P_i(v) \neq 0$
then $\| P_i(v)\| \geq \eta$. Let $C>0$ be large enough so that
$\alpha(x') \leq C$ for any $x' \in \Xn$.  Let
$\rho$ be large enough so that 
\eq{eq: choice of R}{e^\rho\eta > 
2C. }
We restrict the covers $\mathcal{U}^{(\vre)}$
(where 
$\vre \in (0,1/n)$) to $\Delta_\rho \times
T_2$ and apply Theorem \ref{thm: covering} with $t \df \dim T_2 = n-d$.

 Let   $U$ be a connected  subset of 
  $U_k^{(\vre)} \in \cU^{(\vre)}$.
By  %Repeating the arguments proving Lemma   \ref{flat things}, or
    %appealing to 
\cite[\S7]{McMullenMinkowski},% we see that 
the $k$-dimensional subspace 
  $L\defi a^{-1}\tb{V}_{k\vre} (ax)$ 
  as well as the discrete subgroup $\Lam\defi L\cap x$
  are independent of the choice of $a \in U$. 
  By definition of $U_k^{(\vre)}$, for any $a \in U$, $a \Lam$ contains $k$ vectors $v_i = v_i(a),
  i=1, \ldots,
  k$ which span $aL$
  and satisfy \eq{eq: vi satisfy}{
\|v_i\| \in [r, (1+k\vre)r ], \ \ \text{where \ } r \df  \alpha(ax).
 } 
 
In order to 
verify hypothesis~\eqref{09301} of Theorem \ref{thm:
    covering}, we need to show that there 
is at least one $j$ for which $U \cap M_j  = 
\varnothing$. Since 
 $\ker P_1 \cap \cdots \cap \ker P_d = \{0\}$ and $\dim L = k \geq 1$, it suffices to show that
 whenever $U \cap M_j \neq \varnothing$, $L \subset \ker P_j$. 
The face $F_j$ of $\Delta_\rho$ consists of those elements $a_1 \in T_1$
which expand vectors in $V_j$ by a factor of 
$e^\rho$. If $U \cap M_j \neq \varnothing$ then there is $a \in T_2, a_1
\in F_j$ so that $a_1a \in U$. Now \equ{eq: choice of R}, \equ{eq: vi
  satisfy} and the choice of $\eta$ and $C$  ensure that 
the vectors $v_i = v_i(a_1a)$ satisfy $P_j(v_i)=0$. Therefore $L \subset
\ker P_j$.  

It remains to 
verify hypothesis~\eqref{09302} of Theorem \ref{thm: covering}. 
Let $U$ be a connected subset of an intersection $U_{i_1}\cap\dots\cap
U_{i_k}\cap(\Del_\rho\times T_2)$ and let  
$L_{i_j}\defi a^{-1}\tb{V}_{i_j\vre} (ax)$ and $\Lam_{i_j}\defi L_{i_j}\cap x$. 
As remarked above, $L_{i_j},\Lam_{i_j}$ are independent of $a\in U$.

By the definition of the $L_{i_j}$'s we have that $L_{i_j}\varsubsetneq L_{i_{j+1}}$ and so they form 
a flag $\crly{F}$ as in~\eqref{flag}. Lemma~\ref{lem: flag} applies and we deduce that 
\begin{equation}\label{1640}
A_{\crly{F}}=\cap_{j=1}^k A_{L_{i_j}} \textrm{ is of co-dimension}\ge k\textrm{ in }A.
\end{equation} 
For each $a\in U$ and each $j$ let $\set{v^{(j)}_\ell(a)}\in a\Lam_{i_j}$ be the vectors spanning 
$aL_{i_j}$ which satisfy~\eqref{eq: vi satisfy}. Let 
$u^{(j)}_\ell(a)\defi a^{-1} v^{(j)}_\ell\in\Lam_{i_j}$. Observe that:
\begin{enumerate}[(a)]
\item\label{02281} $\on{span}_{\bZ}\set{u^{(j)}_\ell(a)}$ is of finite
  index in $\Lam_{i_j}$ and in particular, 
$u^{(j)}_{i_1}(a)\wedge\dots\wedge u^{(j)}_{i_j}(a)$ is an integer
multiple of $\pm w_{\Lam_{i_j}}$. As a consequence
$\left\|aw_{\Lam_{i_j}} \right\|\le
\left \|v^{(j)}_{i_1}(a)\wedge\dots\wedge v^{(j)}_{i_j}(a) \right\|$. 
\item\label{02282} Because of~\eqref{eq: vi satisfy} we have that
  $\left \|v^{(j)}_{i_1}(a)\wedge\dots\wedge v^{(j)}_{i_j}(a)
  \right\|< C$ for some constant $C$ depending on $n$ alone.
\end{enumerate}
It follows from~\eqref{02281},\eqref{02282} and Lemma~\ref{bdd dist from stab}
that there exist $R>0$ and an element $b\in T_2$ so that
$$U\subset \Del_\rho \times \set{a\in T_2:\forall i_j,
  \left\|aw_{\Lam_{i_j}}\right\|<C}\subset T_1\times \set{a\in T_2:
  \dd(a,bA_{\crly{F}})\le R}.$$ 
%%%%%%%%%% 
By~\eqref{1640} we deduce that 
if $p_2 : A \to T_2$ is the projection 
associated with the
decomposition $A = T_1 \times T_2$ then $p_2(U)$ is
$(R',s+t-k)$-almost affine, where $R'$ depends only on $R,\rho$. This
concludes the proof.  
\end{proof}

%\appendix
\section{Proof of Theorem \ref{thm: covering}}
\Name{appendix: Levin}
In this section we will prove Theorem \ref{thm:  covering}. Our proof 
gives an elementary alternative proof of McMullen's result.
 Moreover
it shows that McMullen's hypothesis that the inradius of the cover is
positive, is not essential.

Below $X$ will denote a second countable locally connected metric space.
We will use calligraphic letters like $\mathcal{U}$ for collections of
sets. The symbol 
 $\mesh ( \mathcal {A})$ will 
 denote the supremum of the diameters of the sets in
 $\mathcal A$.
The symbol $\Lb (\mathcal{A})$ will denote
 the Lebesgue number of a cover $\mathcal A$, i.e. the supremum of all
 numbers $r$ such that each ball of radius $r$ in $X$ is contained in
 some element of $\cA$. The symbol
 $\order(\mathcal{A})$ will denote the  
 largest number of distinct elements of $\mathcal A$ with non-empty
 intersection. 
\begin{definition}\Name{def: asdim}
 Let $\{X_j\}_{j \in \crly{J}}$ be a collection of subsets of $X$. We
 consider each $X_j$ as an independent metric space with the metric
 inherited from $X$, and say that the collection is {\em uniformly of 
 asymptotic dimension $\leq n$}  if for every $r>0$ 
 there is $R >0$ such that for every $j \in \crly{J}$ there is 
  an open cover ${\mathcal X}_j $  of $X_j$ such that
\begin{itemize}
\item  $\mesh ( {\mathcal X}_j) \leq R$.
\item $\Lb ({\mathcal X }_j)> r$. 
\item$\order( {\mathcal X}_j)\leq n+1$. 
\end{itemize}

As an abbreviation we will sometimes write `asdim' in place of `asymptotic
dimension'. 
\end{definition}

Recall that a cover of $X$ is {\em locally finite} if every $x \in X$
has a neighborhood which intersects finitely many sets in the cover. 
We call the intersection of $k$ distinct elements of $\cA$ a {\em
  $k$-intersection,} and denote the union of all $k$-intersections by 
 $[{\mathcal A}]^{k}$. We will need the following two Propositions for
 the proof of Theorem~\ref{thm: covering}. We first prove  
 Theorem~\ref{thm: covering} assuming them and then turn to their proof.

\begin{proposition}
\Name{p2}
Let $\mathcal A$ be a locally finite open cover of  $X$
such that $\order ({\mathcal A}) \leq m$ and the collection of components of
the  $k$-intersections 
of 
$\mathcal A$, $1 \leq k \leq m$,  is uniformly  of $\asdim \leq m-k$.
Then $\mathcal A$ can be refined by a uniformly bounded 
open cover of order at most  $m$.
\end{proposition}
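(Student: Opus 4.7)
The plan is to proceed by induction on $m = \order(\cA)$, at each stage peeling off the deepest level of intersections using the uniform asymptotic dimension hypothesis. The base case $m = 1$ is essentially immediate: elements of $\cA$ are disjoint, their components are uniformly of $\asdim \leq 0$, hence each component admits a uniformly bounded open refinement by pairwise disjoint sets, and the union of all such refinements is the desired refinement of $\cA$.

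For the inductive step, I would assume the proposition is established for covers of order at most $m - 1$. I would then apply the uniform $\asdim \leq 0$ hypothesis to the collection of components of $[\cA]^m$ to produce a family $\cD$ of uniformly bounded, pairwise disjoint open subsets of $X$ covering $[\cA]^m$, each $D \in \cD$ contained in some component of $[\cA]^m$, and hence in some $m$-fold intersection of elements of $\cA$. A slight controlled enlargement yields $\cD'$ with the same properties, still uniformly bounded and pairwise disjoint, whose union is an open neighborhood of $\overline{\bigcup \cD}$.

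Next, I would consider the open set $Y \df X \sm \overline{\bigcup \cD}$. Since $Y$ is disjoint from $[\cA]^m$, the restriction $\cA|_Y$ is an open cover of $Y$ of order at most $m - 1$, and the uniform asdim hypothesis descends to $\cA|_Y$ because components of $k$-intersections of $\cA|_Y$ are open subsets of components of $k$-intersections of $\cA$. Applying the inductive hypothesis to $\cA|_Y$ on the locally connected second countable metric space $Y$ yields a uniformly bounded open refinement $\cR_Y$ of $\cA|_Y$ of order at most $m - 1$. Setting $\cR \df \cR_Y \cup \cD'$ gives an open cover of $X$ (as $\cD'$ covers $X \sm Y$) refining $\cA$, with uniformly bounded mesh, and with order at most $m$: the sets in $\cD'$ are pairwise disjoint from each other and contribute at most one to the multiplicity at any point, while intersections inside $\cR_Y$ have multiplicity at most $m - 1$.

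The main obstacle will be executing the enlargement $\cD \rightsquigarrow \cD'$ so that it simultaneously preserves disjointness, covers a neighborhood of $\overline{\bigcup \cD}$ large enough that $\cR_Y \cup \cD'$ fills out $X$, keeps each $D'$ inside an $m$-fold intersection of $\cA$, and remains uniformly bounded; together with carefully verifying that the uniform asdim bound is genuinely inherited by $\cA|_Y$ with constants independent of the level of induction, this bookkeeping is where the real work lies, while the combinatorial part of the argument is essentially formal.
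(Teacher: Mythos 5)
There is a genuine gap: the inductive hypothesis does not descend with the right constant. To apply the proposition with $m$ replaced by $m-1$ to $\cA|_Y$, you would need, for each $1 \leq k \leq m-1$, the components of the $k$-intersections of $\cA|_Y$ to be uniformly of $\asdim \leq (m-1)-k$. What you actually have is only $\asdim \leq m-k$: such a component is a subset of a component of a $k$-intersection of $\cA$, and a subset inherits at best the same bound. Removing a neighborhood of $[\cA]^m$ from $X$ does not lower the asymptotic dimension of what remains of a lower-depth intersection component. In the intended application (Theorem \ref{thm: covering}), a $1$-intersection is an entire cover element $U_j$, whose $(R, s+t-1)$-almost-affine bound is untouched by deleting the region where all $s+t$ elements overlap. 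So the inductive hypothesis gives you a refinement $\cR_Y$ of $\cA|_Y$ of order $\leq m$, not $\leq m-1$; adding the disjoint family $\cD'$ then produces order $\leq m+1$ and the count fails.

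The paper circumvents this by inducting on the intersection depth $k$ from $m$ down to $1$, keeping $m$ and all the asdim hypotheses fixed. At step $k$ it builds a uniformly bounded cover $\cA^k$ of the set $[\cA]^k$ (not of $X$) of order $\leq m+1-k$ refining $\cA$. The step from $k+1$ to $k$ does not re-invoke the proposition; instead it glues the inherited cover $\cA^{k+1}$ of $[\cA]^{k+1}$, which has order $\leq m-k$, with a disjoint connected family covering $[\cA]^k \setminus [\cA]^{k+1}$ (each piece inside a $k$-intersection, hence uniformly of $\asdim \leq m-k$, produced via Lemma \ref{lem: for p2}), using Lemma \ref{lemma0036} to combine them at a cost of raising the order by exactly one. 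The key point is that the asdim budget of the new pieces at level $k$ exactly matches the order bound of the cover already built on $[\cA]^{k+1}$, which is what makes the gluing lemma applicable. That gluing machinery (Lemmas \ref{p1} and \ref{lemma0036}), and the choice to cover $[\cA]^k$ rather than peel $X$ itself, are the ideas your proposal is missing. The difficulty you flag about enlarging $\cD$ to $\cD'$ is real as well, but secondary to the parameter mismatch.
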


Note that McMullen's theorem, namely the case $s=0$ of Theorem
\ref{thm: covering}, already follows from Proposition \ref{p2}, since
by a theorem of Lebesgue, a uniformly bounded open cover of $\R^t$ is
of order at least $t +1$.  

 \begin{proposition}
 \Name{p3}
 Let $\Delta_1$ and $\Delta_2$ be simplices, $X=\Delta_1 \times \Delta_2$,
 $p_i : X \to \Delta_i$ the projections and 
  $\mathcal A$  a  finite open cover of $X$ such that
  for every $A \in \mathcal A$ and $i=1,2$ the set $p_i(A)$ does 
  not meet at least one of the faces of $\Delta_i$.
  Then $\order ({\mathcal A}) \geq \dim \Delta_1 + \dim \Delta_2 +1 $.
  \end{proposition}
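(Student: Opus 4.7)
The plan is to generalize the classical Knaster--Kuratowski--Mazurkiewicz argument from a single simplex to the product $X = \Delta_1 \times \Delta_2$, by using the hypothesis to build a continuous self-map of $X$ of relative degree one and then invoking a Carath\'eodory-style genericity argument. Setting $n_r \defi \dim \Delta_r$ and denoting by $v^{(r)}_k$ the vertices of $\Delta_r$ and by $F^{(r)}_k$ the facet opposite to $v^{(r)}_k$, I would first use the hypothesis to attach to each $A \in \cA$ labels $i(A) \in \{1, \ldots, n_1+1\}$ and $j(A) \in \{1, \ldots, n_2+1\}$ with $p_1(A) \cap F^{(1)}_{i(A)} = \varnothing$ and $p_2(A) \cap F^{(2)}_{j(A)} = \varnothing$.

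Choosing a partition of unity $\{\phi_A\}_{A \in \cA}$ subordinate to $\cA$, I would then build the continuous map
\[
g : X \to X, \qquad g(z) \defi \sum_{A \in \cA} \phi_A(z)\, \bigl(v^{(1)}_{i(A)},\, v^{(2)}_{j(A)}\bigr).
\]
The labeling is engineered so that $g$ respects every facet of $X$: if $z = (x,y) \in F^{(1)}_k \times \Delta_2$, then only those $A$ with $i(A) \ne k$ can have $\phi_A(z) > 0$, so $g(z)$ again lies in $F^{(1)}_k \times \Delta_2$, and similarly for facets of the form $\Delta_1 \times F^{(2)}_k$. Since faces of $X$ are convex, the straight-line homotopy $h_t(z) = (1-t)z + t g(z)$ likewise preserves every facet, giving a homotopy of pairs $(X, \partial X)$ from $\mathrm{id}_X$ to $g$. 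It follows that $g$ is surjective onto $\mathrm{int}(X)$: if $g$ were to miss some $z_0 \in \mathrm{int}(X)$, then composition with a retraction $X \smallsetminus \{z_0\} \to \partial X$ would produce a map $X \to \partial X$ extending $g|_{\partial X}$, which is homotopic to $\mathrm{id}_{\partial X}$ and hence of degree one, contradicting the no-retraction theorem.

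To conclude, I would pick $z_0 \in \mathrm{int}(X)$ generic in the sense that it avoids the convex hull of every subset of at most $n_1+n_2$ vertices of the polytope $X$; such $z_0$ exist because each such convex hull has dimension at most $n_1+n_2-1$ and only finitely many subsets are involved. By surjectivity, choose $z \in X$ with $g(z) = z_0$ and rewrite the identity as $g(z) = \sum_{(i,j)} \lambda_{(i,j)} \bigl(v^{(1)}_i, v^{(2)}_j\bigr)$ with $\lambda_{(i,j)} \defi \sum\{\phi_A(z) : (i(A), j(A)) = (i,j)\}$. Genericity of $z_0$ forces at least $n_1+n_2+1$ of the $\lambda_{(i,j)}$ to be positive; for each such pair $(i,j)$ some $A$ with that label has $\phi_A(z) > 0$ and hence contains $z$, and since distinct labels give distinct sets, this exhibits at least $n_1+n_2+1$ members of $\cA$ passing through $z$.

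The step I expect to require the most care is the relative-degree / surjectivity claim for $g$: the retraction-obstruction argument above is probably the cleanest justification, but one could equivalently argue via the induced map on $H_{n_1+n_2}(X, \partial X)$ or on the quotient $X / \partial X \cong S^{n_1+n_2}$. The remaining combinatorial and genericity steps are routine once the labeling and the map $g$ are in place.
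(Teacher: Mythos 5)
Your proof is correct and rests on the same core construction as the paper's. The labeling of each $A\in\cA$ by a vertex pair, the self-map $g$ (the paper packages it as a canonical map $X\to\nerve(\cA)$ followed by the piecewise-linear map sending the vertex corresponding to $A$ to $(v^A_1,v^A_2)$, which is precisely your partition-of-unity formula), the face-preservation of $g$, the resulting homotopy of pairs $(X,\partial X)$ to the identity, and the no-retraction obstruction from the nontrivial reduced homology of $\partial X$ all match. The one genuine difference is the final step: the paper argues by contradiction, noting that $\order(\cA)\leq n_1+n_2$ forces $\dim\nerve(\cA)\leq n_1+n_2-1$, so $f(X)$ omits an interior point and the retraction argument yields a contradiction; you instead extract from the retraction argument the unconditional statement that $g$ hits every interior point, and then apply a Carath\'eodory-type genericity observation to a preimage of a point $z_0$ avoiding all convex hulls of at most $n_1+n_2$ vertices, exhibiting directly a point lying in $\geq n_1+n_2+1$ distinctly labeled members of $\cA$. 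This turns the proof-by-contradiction into a direct one, and your explicit straight-line homotopy makes the boundary-preservation claim more transparent; both are nice but inessential reorganizations of the paper's argument.
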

 
%\subsection{Proofs}

 \begin{proof}[Proof of Theorem \ref{thm: covering}.] 
Let $m \df \dim M = s+t$, and suppose by contradiction that 
 $\order ({\mathcal U}) \leq  m$. Since every cover of $M$ has a
 locally finite refinement, there is no loss of generality in assuming
 that $\cU$ is locally finite. Replacing $\cU$
   with the set of connected components of elements of $\cU$, we may 
   assume that all elements of $\cU$ are connected. For any $r_0$, and
   any bounded set $Y$, the
   product space  $Y \times \R^d$ can be covered by a cover of order $d+1$ and
   Lebesgue number greater than $r_0$. Hence our hypothesis (ii) implies that for each
 $k =1, \ldots, m$, the collection of connected components of intersections of $k$
 distinct elements of $\cU$ is
 uniformly of
 asymptotic dimension at most $m-k$. 
Therefore we can apply Proposition
 \ref{p2} to assume that  $\mathcal U$ 
  is  uniformly bounded and of order at most $m$. 
  Take a sufficiently large $t$-dimensional simplex  $\Delta_1 \subset \R^t$ so that 
   the projection of every set  in $\mathcal U$  does not
  intersect at least one of the faces of $\Delta_1$. 
We obtain a contradiction to 
   Proposition \ref{p3}.
\end{proof}
For the proofs of Propositions \ref{p2}, \ref{p3} we will need some auxiliary
lemmas. 
\begin{lemma}\Name{lem: for p2}
Let $\{G_i: i \in \crly{I}\}$ be a locally finite collection of open subsets
of $X$,
and let $Z$ be an open subset such that for each $i \neq j$, $G_i
\cap G_j \subset Z$. Then there are disjoint open subsets
$\mathbf{E}_i, i \in \crly{I},$ such that for any $i$
$$G_i \sm Z \subset \mathbf{E}_i
\subset G_i.
$$
\end{lemma}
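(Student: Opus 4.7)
The plan is to use distance functions in the metric on $X$ to carve out the desired sets. The key observation is that the hypothesis $G_i \cap G_j \subset Z$ can be reformulated as follows: for any $j \neq i$ one has $G_j \sm Z \subset X \sm G_i$. Indeed, if $y \in G_j$ and $y \notin Z$, then $y \notin G_i \cap G_j$, so $y \notin G_i$. Because $G_i$ is open, this implies that for every $x \in G_i$,
$$d\!\left(x,\,\bigcup_{j \neq i}(G_j \sm Z)\right) \,\geq\, d(x,\,X \sm G_i) \,>\, 0,$$
where $d$ denotes the metric on $X$.

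Given this, I would define
$$\mathbf{E}_i \df \left\{x \in G_i : d(x,\,G_i \sm Z) < d\!\left(x,\,\bigcup_{j \neq i}(G_j \sm Z)\right)\right\},$$
using the convention $d(x,\varnothing) = +\infty$. Openness of $\mathbf{E}_i$ is immediate from continuity of the two distance functions, and $\mathbf{E}_i \subset G_i$ holds by construction. The inclusion $G_i \sm Z \subset \mathbf{E}_i$ follows because for $x \in G_i \sm Z$ the left-hand side of the defining inequality vanishes while the right-hand side is strictly positive by the observation above. Disjointness is then automatic: a point $x$ lying in both $\mathbf{E}_i$ and $\mathbf{E}_k$ with $i \neq k$ would simultaneously satisfy $d(x, G_i \sm Z) < d(x, G_k \sm Z)$ and $d(x, G_k \sm Z) < d(x, G_i \sm Z)$, a contradiction.

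I do not anticipate any real obstacle. The local finiteness hypothesis is in fact not used in this construction; it will presumably enter the picture only when Lemma \ref{lem: for p2} is combined with the other tools in the proof of Proposition \ref{p2}.
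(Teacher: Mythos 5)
Your construction is identical to the paper's: the paper sets $F_i \df G_i \sm Z$, $F \df (\bigcup_i G_i) \sm Z$, and defines $\mathbf{E}_i \df \{x \in G_i : \mathbf{d}(x,F_i) < \mathbf{d}(x, F\sm F_i)\}$; since the $F_j$ are pairwise disjoint one has $F \sm F_i = \bigcup_{j\neq i}(G_j \sm Z)$, which is exactly your formula. Where you differ is in the verification. The paper first reduces to the case $X = (\bigcup_i G_i)\cup Z$ and invokes local finiteness to conclude that $F_i$ and $F \sm F_i$ are closed, from which the required strict inequality on $G_i \sm Z$ follows; you instead observe directly that $\bigcup_{j\neq i}(G_j\sm Z)\subset X\sm G_i$ and hence $d\bigl(x, \bigcup_{j\neq i}(G_j\sm Z)\bigr)\geq d(x, X\sm G_i)>0$ for $x\in G_i$, by openness of $G_i$ alone. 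This is cleaner, avoids the preliminary reduction, and—as you correctly point out—makes no use of local finiteness, so the lemma in fact holds for an arbitrary collection of open sets. A minor stylistic gain over the paper's version, which presents local finiteness as part of the argument.
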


\begin{proof}
\ignore{
For each $G_i$ and 
$x \in G_i \cap \partial
\, (G_i \sm Z)$ 
set   
$$
r_x \df \frac{1}{3} \inf_{j \neq i} \dd\left(x, \partial
\, \left(G_j  \sm Z\right) \right),
$$
where $\dd$ is the metric on $X$. The infimum in this
definition is in fact a minimum since $\{G_i\}$ is locally
finite. To see that it
is positive, suppose if possible that  $y_\ell \to x$
for a
sequence $(y_\ell) \subset \partial ( G_j \sm Z)$. 
Then there are $\tilde{y}_\ell \in G_j \sm Z$ with $\dd(y_\ell, \tilde{y}_\ell) \to 0$ so that
$\tilde{y}_\ell \to x$.  Since $G_i$ is open, for large enough $\ell$ we have
$\tilde{y}_\ell \in G_i$, contradicting the assumption that $G_i \cap G_j
\subset Z$. 
Now we set 
$$
\mathbf{E}_i \df \mathbf{E}' \cup \mathbf{E}'', \ \ \text{where \ \ }
\mathbf{E}' \df G_i \sm Z \ \ \text{and \ } \mathbf{E}'' \df G_i \cap
\bigcup_{x \in  G_i \cap \partial
\, (G_i \sm Z)} B(x,r_x).
$$
Clearly each $\mathbf{E}_i$ satisfies \equ{eq: inclusion}, and it is
open since $\mathbf{E}''$ is open and covers the boundary points of
$\mathbf{E}'$. To show that the sets $\mathbf{E}_i$ are 
disjoint, suppose if possible that $z \in \mathbf{E}_i \cap
\mathbf{E}_j$. Then there are $x \in G_i, y \in G_j$ such that $z \in
B(x, r_x) \cap B(y, r_y)$. Supposing with no loss of generality that
$r_x \geq r_y$ we find that  
$$\dd(x,y) \leq \dd(x,z)+\dd(z,y) \leq 2r_x \leq \frac23 d \left(
  x, \partial \left(G_j \sm Z \right)\right),
$$
which is impossible. }
Let $G = \bigcup_{i \in \crly{I}} G_i$. Without loss of generality we can
assume  that $X= G \cup Z$. Define $F_i \df G_i \sm Z, F \df G \sm Z.$
Then the sets $F_i$ are closed and disjoint, and since the collection
$\{G_i\}$ is locally finite, the sets $F \sm F_i$ are closed as
well. Denote by $\mathbf{d}$ the metric on $X$ as well as the distance
from a point to a closed subset. Then it is easy to verify that the
sets 
$$
\mathbf{E}_i \df \{x \in G_i: \mathbf{d}(x, F_i) < \mathbf{d}(x, F \sm
F_i)\}
$$
satisfy the requirements. 
\end{proof}
We denote the nerve of a cover $\cA$ by 
  $\nerve (\mathcal{ A}),$ and consider it with the metric topology
  induced by barycentric coordinates.
 % \marg{we should find a reference for those who are not familiar with 
  %the terminology} 
  Given a partitition of unity subordinate 
  to a cover $\mathcal{A}$ of $X$, there is a standard construction of
  a map $X \to \nerve(\cA)$; such a
  map is called a {\em canonical map}. 

 \begin{lemma}
\Name{p1}
Let a locally connected metric space $Y$ be the union of two open subsets $\mathbf{D}$ and $\mathbf{E}$, and
let $\mathcal D$ and $\mathcal E$ be open covers of $\mathbf{D}$ and
$\mathbf{E}$ respectively, 
with bounded $\mesh$ and $\order$, and such that if $C\subset \mathbf{D}\cap\mathbf{E}$ is a connected subset 
contained in an element of $\cD$, then it is contained in an element of $\cE$.
Then, there is an open cover $\cY$ of $Y$ such that:
\begin{enumerate}
\item The cover $\mathcal{Y}$ refines
$\cD\cup \cE$.
\item $\mesh ({\mathcal Y}) \leq 
\max \left( \mesh(\cD), \mesh(\cE) \right ).$ 
\item $\order ({\mathcal Y}) \leq \max \left ( \order ({\mathcal D})+1, \order
\left({\mathcal E}\right) \right )$.
\end{enumerate}
\end{lemma}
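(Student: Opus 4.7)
The strategy is to construct $\cY$ by combining refined versions of $\cD$ and $\cE$ so that each point of $Y$ is covered only a controlled number of times. The hypothesis will be invoked twice: first to discard those elements of $\cD$ that lie entirely inside $\mathbf{E}$, and then to ``attach'' each protrusion of a surviving $\cD$-component into $\mathbf{E}$ to a single element of $\cE$, after which a disjointification via Lemma~\ref{lem: for p2} yields the sought-for order bound.

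\emph{Step 1 (refining $\cD$).} Replace $\cD$ by the collection $\cD^c$ of connected components of its elements; this is a refinement preserving both the mesh and the order bounds, since at any point the components containing it come from distinct elements of $\cD$. Any $\widetilde{D}\in\cD^c$ contained in $\mathbf{E}$ is a connected subset of $\mathbf{D}\cap\mathbf{E}$ lying inside an element of $\cD$, so by the hypothesis it lies in some element of $\cE$; such components may therefore be discarded without losing the cover property. Let $\cD'$ denote the subcollection of components that are not contained in $\mathbf{E}$; each element of $\cD'$ meets $\mathbf{D}\smallsetminus\mathbf{E}$, and $\cD'\cup\cE$ is still an open cover of $Y$ refining $\cD\cup\cE$.

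\emph{Step 2 (refining $\cE$).} Set $W\df\bigcup\cD'$, an open subset of $Y$. I construct a refinement $\cE^{\ast}$ of $\cE$ which (a) still covers $\mathbf{E}$, (b) preserves the mesh and order bounds, and (c) is pairwise disjoint inside $W$. The hypothesis applied componentwise gives the structural input: for each $\widetilde{D}\in\cD'$ and each connected component $C$ of the open set $\widetilde{D}\cap\mathbf{E}$, the set $C$ is a connected subset of $\mathbf{D}\cap\mathbf{E}$ lying in a $\cD$-element, hence $C\subset E_C$ for some $E_C\in\cE$. Thus every protrusion of $\cD'$ into $\mathbf{E}$ is tied to one element of $\cE$. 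Using this attachment, apply Lemma~\ref{lem: for p2} to the family $\cE$ with a suitably chosen open separator $Z$ containing $Y\smallsetminus W$ (and slightly larger than it) so that the resulting disjoint open sets produce the desired $\cE^{\ast}$.

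\emph{Step 3 (verification).} Take $\cY\df\cD'\cup\cE^{\ast}$. The refinement and mesh conditions are immediate from construction. For the order bound, analyze the multiplicity at a point $y\in Y$ in three regions. If $y\in Y\smallsetminus\mathbf{E}$, only elements of $\cD'$ contain $y$, giving multiplicity at most $\order(\cD)$. If $y\in\mathbf{E}\smallsetminus W$, only elements of $\cE^{\ast}$ contain $y$, giving multiplicity at most $\order(\cE)$. Finally, if $y\in\mathbf{E}\cap W$, the disjointness of $\cE^{\ast}$ in $W$ ensures that at most one element of $\cE^{\ast}$ contains $y$, while at most $\order(\cD)$ elements of $\cD'$ do; thus the multiplicity is at most $\order(\cD)+1$. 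Combining these cases yields $\order(\cY)\le\max(\order(\cD)+1,\order(\cE))$.

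\emph{Main obstacle.} The crux of the proof is Step~2: producing the refinement $\cE^{\ast}$ that is disjoint on the overlap region $W$ while still covering $\mathbf{E}$ and respecting the mesh bound. The attachment of each protrusion $C\subset\widetilde{D}\cap\mathbf{E}$ to a single element $E_C\in\cE$ provided by the hypothesis is exactly the structure required to choose the open set $Z$ in Lemma~\ref{lem: for p2} so that pairwise intersections of $\cE$-elements inside $W$ are properly separated and the resulting disjoint family still covers the boundary strip $\mathbf{E}\cap W$ in conjunction with $\cD'$.
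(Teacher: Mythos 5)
Your overall decomposition (trim $\cD$, shrink $\cE$ to be ``almost disjoint'' on the overlap, then union) is appealing, but Step~2 has a genuine gap that I do not see how to repair within your framework. You want to produce a refinement $\cE^*$ of $\cE$ which covers $\mathbf{E}$, has bounded mesh and order, and is \emph{pairwise disjoint inside $W=\bigcup\cD'$}, and you propose to obtain it by applying Lemma~\ref{lem: for p2} to the family $\cE$ with some open $Z$ containing $Y\smallsetminus W$. But the hypothesis of Lemma~\ref{lem: for p2} is that \emph{all} pairwise intersections $G_i\cap G_j$ lie inside $Z$; for your choice of $Z$ this would force $E_1\cap E_2\subset Z$ for distinct $E_1,E_2\in\cE$, i.e.\ that the elements of $\cE$ never overlap except in a small neighborhood of $Y\smallsetminus W$. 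Nothing in the hypotheses of Lemma~\ref{p1} gives you this: the elements of $\cE$ can, and generically will, overlap heavily deep inside $W$. So the lemma simply does not apply, and no choice of $Z$ fixes this. Even setting the lemma aside, the target condition is too strong: if $\cE^*$ covers $\mathbf{E}$ and is pairwise disjoint on $\mathbf{E}\cap W$, then $\cE^*$ restricted to $\mathbf{E}\cap W$ is an open partition of $\mathbf{E}\cap W$, and if $\mathbf{E}\cap W$ has an unbounded connected piece you cannot simultaneously have bounded mesh. The ``attachment'' data from the hypothesis (each protrusion $C\subset\widetilde{D}\cap\mathbf{E}$ sits in a single $E_C$) does not resolve this, because different protrusions of different $\cD'$-elements can be attached to different $\cE$-elements that still overlap each other inside $W$.

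The paper's proof sidesteps precisely this difficulty by \emph{not} trying to disjointify $\cE$. Instead it builds an auxiliary cover $\cC$ of a collar neighborhood of the frontier between $\mathbf{D}$ and $\mathbf{E}$, by composing a canonical map $\pi:\mathbf{D}\to\nerve(\cD)$ with a Urysohn function $f:Y\to[0,1]$ (with $f\equiv 0$ off $\mathbf{E}$ and $f\equiv 1$ off $\mathbf{D}$) to get $g:\; f^{-1}([0,1))\to \nerve(\cD)\times[0,1]$, and then pulling back a cover of the $(n{+}1)$-dimensional target of order $n{+}2$ whose $[0,1]$-projection has mesh $<1/2$. The extra $[0,1]$-coordinate is what pays for the ``$+1$'' in $\order(\cD)+1$; the elements of $\cC$ that reach $f^{-1}([1/2,1])$ are shown, using exactly your hypothesis, to lie in a single element of $\cE$ and are then absorbed into a modified $\tilde E$, so that in the region $f\geq 1/2$ the count is governed by $\order(\cE)$, while in $f<1/2$ it is governed by $\order(\cC)\leq\order(\cD)+1$. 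Your plan is trying to get the same arithmetic by a hard ``disjointness inside $W$'' property of $\cE^*$, which in general does not exist; the nerve--times--interval trick is the mechanism that achieves the order bookkeeping without asking that much of $\cE$.
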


\begin{proof} 
Let 
$\order ({\mathcal D})=n+1$, let $\mathbf{A} \df \nerve ({\mathcal D})$, and  let $\pi : \mathbf{D} \to \mathbf{A}$
be a canonical map. Take an open cover of ${\mathcal A}$ of $\mathbf{A}$ such that
$\order ({\mathcal A})\leq n+1$ and $\pi^{-1}({\mathcal A})$ refines ${\mathcal D}$.
Let $f : Y \to [0,1]$ be a continuous map such that
$f|_{Y \smallsetminus \mathbf{E}} \equiv 0$ and $f|_{Y\smallsetminus
  \mathbf{D}} \equiv 1$. Set
 $\mathbf{C} \df f^{-1} \left( \left[0,1 \right)\right )$
and 
$$g : \mathbf{C} \to \mathbf{B} \df \mathbf{A}\times \left [0,1 \right], \ \ \
g(c) \df (\pi(c), f(c)).$$
Since $\dim \mathbf{B} \leq n+1$ there is an open cover $\mathcal B$ of $\mathbf{B}$
such that 
$\order ({\mathcal B}) \leq n+2$,
the projection of  $\mathcal B$ to $\mathbf{A}$ refines ${\mathcal A}$
and the projection of $\mathcal B$ to 
$\left[0,1\right]$ is of $\mesh < 1/2$.
Let ${\mathcal C}$ denote the collection of connected components of
sets $\{g^{-1}(B): B \in \mathcal{B}\}$. By construction $\mathcal{C}$
refines $\mathcal D$. Moreover $\order(\mathcal{C}) \leq n+2$ and no
element of $\mathcal{C}$ meets both $f^{-1}(0)$ and $f^{-1} \left(
  \left[\frac{1}{2}, 1 \right]\right).$ Then for every $C \in
\mathcal{C}$ which meets  $f^{-1} \left(
  \left[\frac{1}{2}, 1 \right]\right)$ we have that $C \subset
\mathbf{D} \cap \mathbf{E} $ and there is an element $D \in
\mathcal{D}$ such that $C \subset D$ and hence there is $E \in \cE$
such that $C \subset E$. We  choose one such $E$ and say that $E$ {\em
  marks} $C$. 

We now modify elements of $\cE$, defining 
$$
\tilde{E} \df \left( E \cap f^{-1} \left( \left( \frac12,1 \right]
  \right) \right) \cup \bigcup_{E \text{ marks } C} C.
$$
Finally define $\mathcal Y$ as the collection of modified elements of 
$\cE$ and the elements of ${\mathcal C}$
which do not meet 
$f^{-1} \left( \left[\frac12, 1 \right]\right) $. 
It is easy to see that $\mathcal Y$ has the required properties. 
\end{proof}

\begin{lemma}\Name{lemma0036}
Let $Y$ be a locally connected metric space and let $\mathbf{D},\mathbf{E}_i, i\in
\crly{I}$ be open subsets which cover $Y$. Assume 
that the $\mathbf{E}_i$'s are disjoint, connected, and are uniformly
of $\on{asdim} \le \ell $.
Let $\cD$ be an open cover of $\mathbf{D}$ which is of bounded mesh
and $\on{ord}\cD\le \ell$. 
Then $Y$ has an open cover $\cY$ which refines the cover
$\cD\cup\set{\mathbf{E}_i:i\in\crly{I}}$, 
is of bounded
mesh and $\on{ord}\cY \le \ell+1$.
\end{lemma}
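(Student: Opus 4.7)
The plan is to apply Lemma~\ref{p1} with $\mathbf{D}$ as given and $\mathbf{E} \df \bigcup_{i \in \crly{I}} \mathbf{E}_i$, after first building a suitable open cover of $\mathbf{E}$ out of the asymptotic dimension data for the $\mathbf{E}_i$.

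First, since $\mesh(\cD)$ is finite by hypothesis, I would choose $r > \mesh(\cD)$. Applying Definition~\ref{def: asdim} to the uniformly $\asdim \le \ell$ family $\{\mathbf{E}_i\}$ with this $r$, I obtain a constant $R > 0$ and, for each $i \in \crly{I}$, an open cover $\cX_i$ of $\mathbf{E}_i$ with $\mesh(\cX_i) \le R$, $\Lb(\cX_i) > r$, and $\order(\cX_i) \le \ell+1$. Setting $\cE \df \bigcup_i \cX_i$, and using that the $\mathbf{E}_i$ are pairwise disjoint open subsets of $Y$ (so elements of $\cX_i$ are automatically open in $\mathbf{E}$ and hence in $Y$), I get an open cover of $\mathbf{E}$ with $\mesh(\cE) \le R$ and, by disjointness, $\order(\cE) \le \ell + 1$.

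Next I would verify the gluing hypothesis of Lemma~\ref{p1}: if $C \subset \mathbf{D} \cap \mathbf{E}$ is a connected subset contained in some $D \in \cD$, then $C$ is contained in an element of $\cE$. Indeed, since $C$ is connected and covered by the disjoint open family $\{\mathbf{E}_i\}$, there is a unique $i_0$ with $C \subset \mathbf{E}_{i_0}$. Choosing any $x \in C$, we have $C$ contained in the ball of radius $\diam(C) \le \mesh(\cD) < r$ around $x$ in $\mathbf{E}_{i_0}$, and because $r < \Lb(\cX_{i_0})$, this ball, and therefore $C$, lies in some element of $\cX_{i_0} \subset \cE$.

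Finally Lemma~\ref{p1} yields an open cover $\cY$ of $Y$ refining $\cD \cup \cE$ with $\mesh(\cY) \le \max(\mesh(\cD), R)$ and $\order(\cY) \le \max(\order(\cD)+1, \order(\cE)) \le \ell+1$. Since each element of $\cE$ is contained in some $\mathbf{E}_i$, $\cY$ in fact refines $\cD \cup \{\mathbf{E}_i : i \in \crly{I}\}$, as required. The only delicate point is the coordination between the two covers — one must choose the Lebesgue numbers of the $\cX_i$ strictly larger than $\mesh(\cD)$, so that any connected subset of $\mathbf{D} \cap \mathbf{E}$ fitting inside an element of $\cD$ is automatically trapped inside an element of $\cE$, which is precisely the hypothesis required to invoke Lemma~\ref{p1}.
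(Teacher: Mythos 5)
Your proposal is correct and follows essentially the same route as the paper's own proof: you use the uniform asymptotic dimension hypothesis to build covers $\cX_i$ of the $\mathbf{E}_i$ with Lebesgue number exceeding $\mesh(\cD)$, union them to a cover $\cE$ of $\mathbf{E}=\bigcup_i\mathbf{E}_i$ whose order stays $\le\ell+1$ by disjointness, verify the connected-subset gluing condition exactly as the paper does, and then invoke Lemma~\ref{p1}. The only cosmetic difference is that you name the threshold $r>\mesh(\cD)$ explicitly before invoking Definition~\ref{def: asdim}, whereas the paper simply asserts the existence of covers $\cE_i$ with $\Lb(\cE_i)>\mesh(\cD)$.
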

\begin{proof}
Using the assumption that $\mathbf{E}_i$ is uniformly of  $\on{asdim} \le \ell$ we find 
an open cover $\cE_i$ of $\mathbf{E}_i$ which is of uniformly bounded
mesh, such that $\on{ord}\cE_i\le \ell+1$ and
$\on{Leb} \cE_i> \on{mesh} \cD$. We assume that the sets in $\cE_i$
are subsets of $\mathbf{E}_i$ and $\Lb(\cE_i)$ is determined with
respect to the metric of $Y$ restriced to $\mathbf{E}_i$.  
Let
$$
\mathbf{E}\defi\bigcup_{i \in \crly{I}} \mathbf{E}_i \ \text{ and } \ 
\cE\defi\bigcup_{i \in \crly{I}}
\cE_i.
$$

Clearly it suffices to verify that the hypotheses of 
Lemma~\ref{p1} are satisfied. Indeed, 
by assumption the cover $\cD$ is of bounded $\mesh$ and order, and 
$\cE$ is of bounded mesh because of the
uniform bound on $\mesh(\cE_i)$. We  also have that
$\order \cE\le \ell+1$ because of the bounds $\order \cE_i\le\ell+1$ 
and the fact that the $\mathbf{E}_i$ are disjoint. %'s and the
For 
the last condition, % that we need to check, 
let a connected subset
$C\subset \mathbf{D}\cap\mathbf{E}$ which is contained in an element  
of $\cD$ be given. By the connectedness and disjointness of the
$\mathbf{E}_i$'s we conclude that there exists $i$ with  
$C\subset \mathbf{E}_i$. Because $\on{Leb} \cE_i>\mesh \cD$ we deduce
that  since $C$ is contained in an element of $\cD$ it must be  
contained in an element of $\cE_i$ and in turn, it must be contained in an element of  
%$\cE_i|_{\mathbf{E}_i}$ which is an element of 
$\cE$. % by definition.
%
%The outcome of an application of Lemma~\ref{p1} is an open cover $\cY$ of $Y$ for which 
%(i) $\cY$ refines $\cD\cup\cE$ and therefore refines
%$\cD\cup\set{\mathbf{E}_i:i\in\crly{I}}$, (ii) $\mesh \cY\le\max\set{
%  \mesh\cD, R} $ or in 
%other words, $\cY$ is of bounded mesh, and (iii) $\order\cY\le \ell +1$ as desired.
\end{proof}

 \begin{proof}[Proof of Proposition \ref{p2}.] 
Proceeding inductively
in reverse order, for $k=m, \ldots, 1$ 
 we will construct 
 a uniformly bounded open cover ${\mathcal A}^k $ of $[{\mathcal
   A}]^k$ 
 such that
 $\order ({\mathcal A}^k) \leq m+1-k$ and 
  ${\mathcal A}^k$ refines the restriction of $\mathcal A$ to
  $[{\mathcal A}]^k$. The construction is  
 obvious for $k =m$. Namely, our hypothesis and Definition \ref{def:
   asdim} with $n=m-k=0$ mean that 
$[\mathcal{A}]^m$ has a cover of bounded mesh and  order 1, that is,
we can just  set ${\mathcal A}^{m}$ to be
 the connected components of $[{\mathcal A}]^{m}$. 
Assume that the construction
 is completed for $k+1$ and proceed to $k$ as follows. First notice
 that for two distinct $k$-intersections $A$ and $A'$ of $\mathcal A$
 the complements $A\smallsetminus [{\mathcal A}]^{k+1}$
 and $A' \smallsetminus [{\mathcal A}]^{k+1}$ are disjoint.
By Lemma \ref{lem: for p2}, 
we can cover $[{\mathcal A}]^k\smallsetminus[{\mathcal A}]^{k+1}$ by 
a collection $\set{\mathbf{E}_i:i\in\crly{I}}$ of disjoint connected open sets such that every
$\mathbf{E}_i$ is contained in a $k$-intersection of $\mathcal A$. In particular, the collection
$\set{\mathbf{E}_i:i\in\crly{I}}$ is uniformly of asdim $\le m-k$. We can therefore apply Lemma~\ref{lemma0036}
with the choices $Y=\br{\cA}^k, \mathbf{D}=\br{\cA}^{k+1}, \cD=\cA^{k+1}$, the collection $\set{\mathbf{E}_i:i\in\crly{I}}$, and
$\ell=m-k$, and obtain an open cover $\cY$ of $\br{\cA}^k$ of order $\le m-k+1$ that refines $\cD\cup\set{\mathbf{E}_i:i\in\crly{I}}$ and 
in particular, refines $\cA|_Y$. This completes the inductive step.

\end{proof}

\begin{proof}[Proof of Proposition \ref{p3}.] 
For every $A \in \mathcal A$ choose
 a vertex $v^A_i$ of $\Delta_i$ so that $p_i(A)$ does not intersect 
 the face of $\Delta_i$ opposite to $v^A_i$. Let $Y \df \nerve(\cA)$
 and let $f : X \to X$ be the
 composition of 
a canonical map  $ X \to Y$ and a map 
$Y\to X$
which is  linear on each simplex of $Y$
and sends the vertex of $Y$ related to $A \in \mathcal A$
to the point $(v_1^A, v_2^A) \in X$. 
Take a point $x \in \partial \Delta_1 \times \Delta_2$. Then 
$p_1(x)$ belongs to a face $\Delta'_1$ of $\Delta_1$ and hence
for every $A \in \mathcal A$ containing $x$ we have that $v^A_1 \in \Delta'_1$.
Thus both $x$ and $f(x)$ belong to $\Delta'_1 \times \Delta_2$.
 Applying the same argument to
 $\Delta_1 \times \partial \Delta_2$ we get that
 the boundary $\partial X$ is invariant under $f$ and $f$ restricted
 to $\partial X$ is homotopic to the identity map of $\partial X$.
% Let
% $n \df \dim \Delta_1+\dim \Delta_2$. 
 If $\order ( {\mathcal A}) \leq \dim \Delta_1 + \dim \Delta_2$ then
  $\dim Y \leq \dim X -1$ and hence
 there is an interior point $a$ of $X$ not covered by $f(X)$. Take 
 a retraction $r : X \smallsetminus \{ a\} \to \partial X$. Then
 the identity map of $\partial X$ factors up to homotopy through 
 the contractible space $X$ which contradicts the non-triviality
 of the reduced homology of $\partial X$. 
\end{proof}
See \S\ref{section: karasev} for another proof of Proposition
\ref{p3}. 
 
\section{Another argument for Theorem \ref{thm:
    covering}}\Name{section: karasev}
In this section we will sketch another proof of Theorem \ref{thm:
  covering}. The proof, suggested by Roman Karasev,  proceeds by
reducing the theorem to its special case $s=0$. 
%i.e. to the result proved by McMullen \cite[Thm. 5.1]{McMullenMinkowski}. 

Let $\Delta_{CFK}$ denote the {\em Coxeter-Freudenthal-Kuhn
simplex}  
$$\{(x_1, \ldots,
x_n) \in \R^n: 0 \leq x_1 \leq \cdots \leq x_n \leq 1\},$$ 
 and let $\Gamma$ be the group generated by isometric reflections of
 $\R^n$ in the facets of $\Delta_{CFK}$. Then it is known
 \cite{Coxeter} that $\Gamma$ acts discretely on $\R^n$ with
 fundamental domain $\Delta_{CFK}$ ($\Gamma$ is the so-called {\em
   affine Coxeter group of type $\tilde{A}_n$}). Using this fact, we
 prove Theorem \ref{thm: covering} as follows. 

Recall that the case $s=0$ of the Theorem was proved by McMullen, see 
\cite[Thm. 5.1]{McMullenMinkowski}. According to this result, a cover
$\cV$  of $\R^{m}$ with $\Lb(\cV)>0$  has order at least
$m+1$, provided it satisfies the following analogue of
\equ{09302}:
\begin{itemize}
\item[{\em (ii)'}]
{\em 
There is $R$ so that for 
connected component $V$ of the intersection of $k \leq m$ distinct 
%connected components of 
elements of $\cV$ is $(R, m-k)$-almost affine.}
\end{itemize}
We remark that McMullen assumed that $\cV$ has positive inradius,
i.e. there is $r>0$ such that for any $x \in \R^m$, there is an
element of $\cV$ containing the ball of radius $r$ around $x$. However
as we remarked above, this hypothesis is not essential. 

Setting $m \df s+t$, starting with a cover of $M$ satisfying (i)
and (ii) we will form a cover of $\R^{m}$ satisfying (ii)'. 

Clearly there is no
 loss of generality in assuming that $\Delta = \Delta_{CFK}.$ 
Let $\varphi: \R^{m} \to \Delta \times \R^t$ be the map which sends
$(x,y),$ where $x \in \R^s, y \in \R^t$ to $(x', y)$ where $x'$ is the
representative of the orbit $\Gamma x$ in $\Delta$. Let $\cV$ be
the cover of $\R^{s+t}$ obtained by pulling back the cover $\cU$. For
each $j$, let $x_j$ be the vertex of $\Delta$ opposite $F_j$ and let
$\Gamma_j$ be the finite subgroup of $\Gamma$ fixing $x_j$. Then
$\Gamma_j \Delta$ is a polytope all of whose boundary faces are images
of $F_j$ under $\Gamma_j$. In light of assumption (i), this implies
that any connected component of any $V \in \cV$ is within a uniformly
bounded distance of $\{v\} \times \R^t$ for some $v \in V$. Therefore
(ii)' holds for $\cV$, for $k \leq s$, while for $k>s$, (ii)' for
$\cV$ is implied by (ii) for $\cU$. By McMullen's theorem, the order of $\cV$ is at least
$s+t+1$, and therefore the same holds for $\cU$. 

 A similar argument, also suggested by Roman Karasev, gives another proof of Proposition
 \ref{p3}. Namely suppose that for $i=1,2$, $\Delta_i \subset \R^{n_i}$ is realized
 concretely as the Coxeter-Freudenthal-Kuhn simplex of dimension
 $n_i$. Let $\Gamma = \Gamma_1 \times \Gamma_2$ where $\Gamma_i$ is
 the group generated by reflections in the facets of $\Delta_i$. Then
 a cover of $\Delta_1 \times \Delta_2$ gives rise to a cover of
 $\R^{n_1+n_2}$ by open sets of uniformly bounded diameter, and hence
 Lebesgue's theorem implies that there is a point which is covered
 $n_1+n_2+1$ times.

%\bibliographystyle{alpha}
%\bibliography{elonbib}
\def\cprime{$'$} \def\cprime{$'$} \def\cprime{$'$}
% \bib, bibdiv, biblist are defined by the amsrefs package.
\begin{bibdiv}
\begin{biblist}

\bib{Coxeter}{article}{
      author={Coxeter, H. S.~M.},
       title={Discrete groups generated by reflections},
        date={1934},
        ISSN={0003-486X},
     journal={Ann. of Math. (2)},
      volume={35},
      number={3},
       pages={588\ndash 621},
         url={http://dx.doi.org/10.2307/1968753},
      review={\MR{1503182}},
}

\bib{karasev}{unpublished}{
      author={Karasev, Roman},
       title={Covering dimension using toric varieties},
        note={Preprint, available at http://arxiv.org/pdf/1307.3437v1.pdf},
}

\bib{McMullenMinkowski}{article}{
      author={McMullen, Curtis~T.},
       title={Minkowski's conjecture, well-rounded lattices and topological
  dimension},
        date={2005},
        ISSN={0894-0347},
     journal={J. Amer. Math. Soc.},
      volume={18},
      number={3},
       pages={711\ndash 734 (electronic)},
         url={http://dx.doi.org/10.1090/S0894-0347-05-00483-2},
      review={\MR{2138142 (2006a:11086)}},
}

\bib{PS}{unpublished}{
      author={Pettet, Alexandra},
      author={Suoto, Juan},
       title={Periodic maximal flats are not peripheral},
        note={preprint},
}

\bib{gruber_arxiv}{unpublished}{
      author={Shapira, Uri},
      author={Weiss, Barak},
       title={On stable lattices and the diagonal group},
        %date={2013},
        note={Available on arxiv http://arxiv.org/abs/1309.4025},
}

\bib{gruber}{unpublished}{
      author={Shapira, Uri},
      author={Weiss, Barak},
       title={On the mordell-gruber spectrum},
        %date={2013},
        note={Preprint, to appear in IMRN},
}

\bib{TW}{article}{
      author={Tomanov, George},
      author={Weiss, Barak},
       title={Closed orbits for actions of maximal tori on homogeneous spaces},
        date={2003},
        ISSN={0012-7094},
     journal={Duke Math. J.},
      volume={119},
      number={2},
       pages={367\ndash 392},
         url={http://dx.doi.org/10.1215/S0012-7094-03-11926-2},
      review={\MR{1997950 (2004g:22006)}},
}

\bib{Zimmer}{book}{
      author={Zimmer, Robert~J.},
       title={Ergodic theory and semisimple groups},
      series={Monographs in Mathematics},
   publisher={Birkh\"auser Verlag, Basel},
        date={1984},
      volume={81},
        ISBN={3-7643-3184-4},
         url={http://dx.doi.org/10.1007/978-1-4684-9488-4},
      review={\MR{776417 (86j:22014)}},
}

\end{biblist}
\end{bibdiv}

\end{document}